\documentclass[12pt]{amsart}


\usepackage{amssymb,amsfonts,amsthm}
\usepackage{amsmath}
\usepackage[all]{xy}

\usepackage[margin=3cm]{geometry}

\numberwithin{equation}{section}
\newtheorem{thm}[equation]{Theorem}
\newtheorem{lem}[equation]{Lemma}
\newtheorem{cor}[equation]{Corollary}
\newtheorem{prop}[equation]{Proposition}

\theoremstyle{definition}
\newtheorem{dfn}[equation]{Definition}

\theoremstyle{remark}
\newtheorem{rem}[equation]{Remark}

\renewcommand{\leq}{\leqslant}
\renewcommand{\geq}{\geqslant}
\renewcommand{\div}{\mathop{\mathrm{div}}\nolimits}

\newcommand{\abs}[1]{\left\lvert#1\right\rvert}

\newcommand{\norm}[1]{\left\|#1\right\|}
\newcommand{\MA}{\mathop{\mathrm{MA}}\nolimits}
\newcommand{\pigwedge}{\mathop{\bigwedge}\nolimits}
\newcommand{\PSH}{\mathop{\mathrm{PSH}}\nolimits}
\newcommand{\soup}{\mathop{\mathrm{sup^*}}}
\newcommand{\Supp}{\mathop{\mathrm{Supp}}\nolimits}
\newcommand{\Vol}{\mathop{\mathrm{Vol}}\nolimits}

\begin{document}

\title[Restricted Bergman kernel asymptotics]{Restricted Bergman kernel asymptotics}
\author[Tomoyuki Hisamoto]{Tomoyuki Hisamoto}
\address{Graduate School of Mathematical Sciences, The University of Tokyo, 
3-8-1 Komaba Meguro-ku, Tokyo 153-0041, Japan}
\email{hisamoto@ms.u-tokyo.ac.jp}
\subjclass[2000]{Primary~32A25, Secondary~32L10, 32W20}
\keywords{Bergman kernel, extension theorem, Monge-Amp\`{e}re operator}
\date{}
\maketitle

\begin{abstract}

In this paper, we investigate a {\em restricted } version of Bergman kernels 
for high powers of a big line bundle over a smooth projective variety. 
The geometric meaning of the leading term is specified. 
As a byproduct, we derive some integral representations for the restricted volume.

\end{abstract}

\section{Introduction} 
The subjects discussed in this paper originate from the extension problem. 
Let $L$ be a big line bundle on a smooth complex projective variety $X$ and 
$Z \subseteq X$ a subvariety. 
Denote by $\iota:Z \hookrightarrow X$ the inclusion map. 
We always use this notation unless specifically noted. 
It is important to know 
how many sections of $L|_Z$ are extended to the ambient space $X$. 
We can expect to get more such sections taking high tensor powers of $L$,   
thus we are led to consider the spaces of sections 
\begin{equation*} 
 H^0(X|Z, \mathcal{O}(mL)):= 
        \mathrm{Im} \big[ \iota^* : H^0(X, \mathcal{O}(mL)) \to H^0(Z, \mathcal{O}(mL)) \big].
\end{equation*} 
The restricted volume 
\begin{equation*} 
 \Vol_{X|Z}(L)
  :=\limsup_{m \to \infty} \frac{\dim H^0(X|Z, \mathcal{O}(mL))}{m^p/p!}
\end{equation*} 
measures the asymptotic growth of these spaces. 
Here $p$ denotes the complex dimension of $Z$. 
The notion of the restricted volume first appeared in Tsuji's paper \cite{Tsu06} 
(see also \cite{HM06}, \cite{Tak06}, \cite{ELMNP09}). 
In this paper, we investigate a local version of the restricted volume. 
\begin{dfn}  
  Let $h_L$ be a smooth Hermitian metric on $L$, $\varphi \in C^{\infty}(X; \mathbb{R})$ a smooth weight, 
  and $d \mu $ a volume form on $Z$. 
  Then for any positive integer $m$, {\em the restricted Bergman kernel} 
  of $(Z, mL, h_L^me^{-m\varphi}, d\mu)$ is defined as follows:  
\begin{equation*} 
  B_{X|Z}(m\varphi) := \abs{s_{m,1}}_{ m \varphi}^2+...+\abs{s_{m,N(m)}}_{m \varphi}^2. 
\end{equation*} 
 Here $\{s_{m,1},...,s_{m,N(m)}\}$ is a complete orthonormal system of
 $H^0(X|Z, \mathcal{O}(mL))$ 
 with respect to the norm 
 \begin{equation*}
  \norm{s}_{m \varphi}^2 := \int_Z \abs{s}_{m \varphi}^2 d \mu, 
 \end{equation*}
 \begin{equation*}
  \abs{s}_{m \varphi}^2 := \iota^*h_L^m(s,s)e^{-m \iota^*\varphi}.  
 \end{equation*}
  $\hfill \Box$
\end{dfn}
By definition, $B_{X|Z}(m\varphi)$ is a smooth function on $Z$ and 
\begin{equation*}
 \int_Z B_{X|Z}(m \varphi ) d \mu = \dim H^0(X|Z, \mathcal{O}(mL)). 
\end{equation*}
In fact $B_{X|Z}(m \varphi)$ tells not only the dimension 
but rather deeper information of the space of sections. 
The study of the asymptotic behavior of $B_{X|Z}(m \varphi)$ is itself an important problem in complex geometry. 

In this paper we closely examine the leading term of $B_{X|Z}(m \varphi)$. 
If $L$ is ample and the metric $h_Le^{-\varphi}$ has the positive curvature,   
then by the Serre vanishing theorem the problem is reduced to the case $Z=X$. 
In this case, Tian's classical result (\cite{Tian90}) gives the complete answer.   
For general $L$ and $h_L$, Berman first treated the case $Z=X$ 
in \cite{Ber09}. And in that paper, he also mentioned the restricted case without proof. 
Without the assumption of curvature positivity, the effect of the subvariety can not be ignored. 
We give a complete picture in the restricted case and specify the limit of $m^{-p}B_{X|Z}(m \varphi)$. 
Our study can be seen as a local version of 
the restricted Fujita-type approximation (Theorem \ref{Fujita approximation}). 
As a result, a localization of $\Vol_{X|Z}(L)$ is given.  

To state our results, we need some notion which arises in non-positive curvature case. 
First denote by $\mathbb{B}_+(L) \subsetneq X$ the augmented base locus (see \cite{Laz04}, Definition 10.3.2). 
This is actually an algebraic subset of $X$ and $L$ is ample precisely if $\mathbb{B}_+(L) = \emptyset$.   
Secondly, we denote by $P_{X|Z}\varphi$ the equilibrium weight associated to $\varphi$ (see Definition \ref{definition of equilibrium weight}). 
Let $\theta = -dd^c \log h_L$ be the Chern curvature of $h_L$, then $\theta + dd^c P_{X|Z}\varphi$ defines a positive current on $Z$, 
and $P_{X|Z}\varphi=\varphi$ holds if $\theta + dd^c \varphi$ is positive. 
Roughly speaking, $P_{X|Z}\varphi$ is the best $\theta$-plurisubharmonic function on $Z$ approximating $\varphi$. 
Further, one can measure the rest of the positivity of $\theta + dd^c \varphi$ by 
$\MA(P_{X|Z}\varphi) := \langle (\theta + dd^c P_{X|Z}\varphi)^p \rangle$, 
the non-pluripolar Monge-Amp\`{e}re product of $P_{X|Z}\varphi$
(see Definition \ref{definition of MA}). 
\begin{thm}\label{Main Theorem} 
   Assume $Z$ is smooth and $Z \nsubseteq \mathbb{B}_+(L) $. 
   Then the convergence  
  \begin{equation*}
   \frac{B_{X|Z}(m \varphi)}{m^p/p!}d \mu \to \MA(P_{X|Z} \varphi) \\ 
  \end{equation*}
  holds in the sense of currents. 
\end{thm}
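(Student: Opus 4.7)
The strategy adapts Berman's proof of the absolute case $Z=X$ in \cite{Ber09} to the restricted setting. The argument splits into a pointwise asymptotic upper bound for $m^{-p}B_{X|Z}(m\varphi)$ and a total-mass identity furnished by the restricted Fujita-type approximation (Theorem~\ref{Fujita approximation}); these two pieces combine, by standard weak-compactness of positive measures, to give the claimed convergence of currents.

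The starting point is the extremal characterization
\begin{equation*}
B_{X|Z}(m\varphi)(z) \;=\; \sup\Bigl\{|\iota^{*}s(z)|^{2}_{m\varphi} \;:\; s \in H^{0}(X,mL),\ \|\iota^{*}s\|_{m\varphi}\leq 1\Bigr\}.
\end{equation*}
For any competitor $s$ the function $\psi_{s} := m^{-1}\log|s|^{2}_{h_{L}^{m}}$ is $\theta$-plurisubharmonic on $X$, and an $L^{2}$-submean-value inequality on $Z$, combined with the normalization $\|\iota^{*}s\|_{m\varphi}\leq 1$, already yields the logarithmic bound $\psi_{s}|_{Z} \leq P_{X|Z}\varphi + o(1)$ by the very definition of the equilibrium weight. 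To upgrade this to the correct pointwise density bound at a generic $z_{0} \in Z \setminus \mathbb{B}_{+}(L)$, I would rescale by $m^{-1/2}$ in local holomorphic coordinates on $Z$ in which $\theta + dd^{c}P_{X|Z}\varphi$ is diagonalized at $z_{0}$, and compare the rescaled kernel to the Bargmann-Fock model on $\mathbb{C}^{p}$. This produces
\begin{equation*}
\limsup_{m\to\infty}\frac{B_{X|Z}(m\varphi)(z_{0})}{m^{p}/p!}\;\leq\; \frac{\MA(P_{X|Z}\varphi)}{d\mu}(z_{0})
\end{equation*}
at $d\mu$-almost every $z_{0}\in Z\setminus\mathbb{B}_{+}(L)$.

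For the total mass, the definition of the restricted volume and Theorem~\ref{Fujita approximation} combine to give
\begin{equation*}
\int_{Z}\frac{B_{X|Z}(m\varphi)}{m^{p}/p!}\,d\mu \;=\; \frac{\dim H^{0}(X|Z,\mathcal{O}(mL))}{m^{p}/p!}\;\longrightarrow\; \Vol_{X|Z}(L) \;=\; \int_{Z}\MA(P_{X|Z}\varphi).
\end{equation*}
Together with the pointwise upper bound this forces the convergence of measures: any subsequential weak-$\ast$ limit $\nu$ of the positive measures $(m^{p}/p!)^{-1}B_{X|Z}(m\varphi)\,d\mu$ satisfies $\nu\leq \MA(P_{X|Z}\varphi)$ pointwise in density and $\nu(Z)=\MA(P_{X|Z}\varphi)(Z)$, so $\nu=\MA(P_{X|Z}\varphi)$.

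The main obstacle is the local Bargmann-Fock comparison. The potentials $\psi_{s}$ may be strongly singular away from $z_{0}$ because $\mathbb{B}_{+}(L)$ is non-empty, so one needs a Bergman kernel concentration estimate on balls of radius $m^{-1/2}$ around $z_{0}$ that is insensitive to these distant singularities; this is precisely where the hypothesis $Z\nsubseteq \mathbb{B}_{+}(L)$ is used. A further subtlety is that $P_{X|Z}\varphi$ is only quasi-plurisubharmonic with (at worst) analytic singularities along $\mathbb{B}_{+}(L)\cap Z$, so ``density at $z_{0}$'' in the Bargmann-Fock comparison must be read off from the non-pluripolar product $\MA(P_{X|Z}\varphi)$, and the continuity of this product under the appropriate regularizations of $P_{X|Z}\varphi$ has to be invoked to pass to the limit.
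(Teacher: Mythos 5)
Your overall architecture --- an almost-everywhere upper bound on the density of $m^{-p}B_{X|Z}(m\varphi)$ plus convergence of total masses, glued by elementary measure theory --- is the same as the paper's. The genuine gap is in the total-mass step. You assert that the identity $\Vol_{X|Z}(L)=\int_Z\MA(P_{X|Z}\varphi)$ follows from ``the definition of the restricted volume and Theorem~\ref{Fujita approximation}''. It does not: the \emph{statement} of the restricted Fujita approximation only compares $\Vol_{X|Z}(L)$ with restricted volumes $\Vol_{\widetilde{X}|\widetilde{Z}}(A)$ on modifications and says nothing about the equilibrium Monge--Amp\`ere mass. The inequality $\Vol_{X|Z}(L)\leq\int_Z\MA(P_{X|Z}\varphi)$ is the easy half (Fatou's lemma, Corollary~\ref{half of Fujita}); the reverse inequality is the crux of the whole argument and is extracted from the \emph{proof} of Theorem~\ref{Fujita approximation}, not its conclusion: one forms the Fubini--Study type currents $T_m=\theta+dd^cu_m$ from Bergman kernels of $H^0(X,\mathcal{O}(mL-G))$, uses uniform global generation of $\mathcal{O}(mL)\otimes\mathcal{I}(mu_m)$ to produce the semiample $A'$ with $m\pi_X^*L=A'+E'$ and $E'\leq F'$, applies the comparison theorem of \cite{BEGZ08} on $\widetilde{Z}$ to get $\Vol_{\widetilde{X}|\widetilde{Z}}(A)\geq\int_Z\langle(\iota^*T_m)^p\rangle$, and then needs $\int_Z\langle(\iota^*T_m)^p\rangle\to\int_Z\MA(\iota^*P_X\varphi)=\int_Z\MA(P_{X|Z}\varphi)$ (Theorems~\ref{convergence of FS} and~\ref{comparison for equilibrium}). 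That convergence rests on the two-sided fundamental inequality (Theorem~\ref{fundamental inequality for equilibrium}), whose lower bound is where the Ohsawa--Takegoshi-type extension theorem (Theorem~\ref{L2 extension 2}) enters. None of this --- in particular, no lower bound whatsoever for the restricted Bergman kernel and no use of $L^2$ extension --- appears in your proposal, so the key mass identity is assumed rather than proved.

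A secondary point: your pointwise bound by the density of $\MA(P_{X|Z}\varphi)$ also quietly uses the $C^{1,1}$ regularity of $P_{X|Z}\varphi$ off $\iota^{-1}(\mathbb{B}_+(L))$ and the representation formula $\MA(P_{X|Z}\varphi)=\mathbf{1}_{\{P_{X|Z}\varphi=\iota^*\varphi\}}(\iota^*\theta+dd^c\iota^*\varphi)^p$ (Theorems~\ref{regularity} and~\ref{representation formula for equilibrium}). The local mean-value (Bargmann--Fock) comparison naturally bounds the density by $(\iota^*\theta+dd^c\iota^*\varphi)^p$ where that form is semipositive, while off the contact set $\{P_{X|Z}\varphi=\iota^*\varphi\}$ one needs the exponential decay $B_{X|Z}(m\varphi)\leq C\,m^p e^{-m(\iota^*\varphi-P_{X|Z}\varphi)}$ coming from the definition of the equilibrium weight; the regularity theorem itself again requires the restricted extension theorem (to construct the vector fields on the total space of $L^*$). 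You allude to these issues in your last paragraph, but they are substantive steps of the proof, not technical footnotes, and without them the ``density'' form of your upper bound is not established.
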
 
As byproducts of our investigation of restricted Bergman kernel asymptotics, 
we can get several integral representations of restricted volumes (discussed in section 4).  
For instance, we have the following. 
\begin{thm}\label{Byproducts}
  In the situation of Theorem \ref{Main Theorem}, the following holds. 
 \begin{align*}
  \Vol_{X|Z}(L) 
  &=\int_{Z} \MA(P_{X|Z} \varphi) 
  =\int_{Z} \MA(\iota^*P_X \varphi) \\
  &=\sup_T \int_{Z} \big\langle (\iota^*T)^p \big\rangle 
  = \int_{Z} \big\langle (\iota^*T_{\min})^p \big\rangle, 
 \end{align*}
  where $T$ runs through all the closed positive currents in $c_1(L)$, 
  with small unbounded loci not contained in $\iota(Z)$. 
  We denote by $T_{\min}$ a minimum singular closed positive current in $c_1(L)$. 
\end{thm}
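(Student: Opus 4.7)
The strategy is to extract all four identities from Theorem \ref{Main Theorem} by testing the asserted current convergence against the constant function $1$ on the compact manifold $Z$, combined with monotonicity of the non-pluripolar Monge-Amp\`ere mass and the restricted Fujita approximation (Theorem \ref{Fujita approximation}). The first equality is immediate from this pairing: the LHS becomes $\dim H^0(X|Z, \mathcal{O}(mL))/(m^p/p!)$, which converges to $\Vol_{X|Z}(L)$ since Fujita approximation promotes the defining $\limsup$ to a genuine limit, while the RHS becomes $\int_Z \MA(P_{X|Z}\varphi)$. Convergence of total mass here is justified by a uniform upper bound on $B_{X|Z}(m\varphi)/m^p$ coming from the cohomological estimate $\dim H^0(X, \mathcal{O}(mL)) = O(m^n)$, which upgrades weak convergence against smooth test forms to convergence of masses on the compact $Z$.

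For the second equality, I would use that $\iota^* P_X \varphi$ is an $\iota^*\theta$-psh function on $Z$ arising as the restriction of a $\theta$-psh function on $X$ bounded above by $\varphi$, so the maximality defining $P_{X|Z}\varphi$ (Definition \ref{definition of equilibrium weight}) gives $\iota^* P_X\varphi \leq P_{X|Z}\varphi$. Monotonicity of non-pluripolar Monge-Amp\`ere mass along such pairs yields $\int_Z \MA(\iota^* P_X\varphi) \leq \int_Z \MA(P_{X|Z}\varphi)$. For the reverse inequality, I would apply Theorem \ref{Main Theorem} to a decreasing sequence of smooth weights $\varphi_j \searrow P_X\varphi + O(1)$ obtained by Demailly regularization (legitimate since $Z \not\subseteq \mathbb{B}_+(L)$), passing to the limit via continuity of non-pluripolar mass along such decreasing sequences.

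For the supremum formula together with the $T_{\min}$ identity, write any admissible closed positive current as $T = \theta + dd^c \psi$ with $\psi \in \PSH(X, \theta)$ normalized so $\psi \leq \varphi$. Maximality gives $\psi \leq P_X\varphi$, hence $\iota^*\psi \leq \iota^* P_X\varphi$, and monotonicity forces $\int_Z \langle (\iota^* T)^p \rangle \leq \int_Z \MA(\iota^* P_X\varphi) = \Vol_{X|Z}(L)$. Equality is realized by $T = \theta + dd^c P_X \varphi$ itself, whose small unbounded locus lies in $\mathbb{B}_+(L)$ and therefore avoids $\iota(Z)$, proving the sup formula. The minimum singular current $T_{\min}$ corresponds to the weight of least singularity in $c_1(L)$; applying Theorem \ref{Main Theorem} to the smooth weight $\varphi \equiv 0$ and invoking the second equality yields $\int_Z \langle (\iota^* T_{\min})^p \rangle = \Vol_{X|Z}(L)$, so $T_{\min}$ attains the supremum.

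The main obstacle I anticipate is the reverse inequality in the second equality: the non-pluripolar Monge-Amp\`ere is only upper semi-continuous along decreasing nets of $\theta$-psh functions, so verifying that no mass is lost when passing from smooth regularizations down to the singular envelope $P_X \varphi$ requires careful pluripotential-theoretic control near $\mathbb{B}_+(L)$. The hypothesis $Z \not\subseteq \mathbb{B}_+(L)$ is precisely what prevents mass concentration on $Z$ in this limit and makes the restricted masses well-defined throughout the argument.
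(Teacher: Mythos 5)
Your derivation of the first identity from Theorem \ref{Main Theorem} is fine (on the compact $Z$ the constant $1$ is itself a test function, so the weak convergence already gives convergence of total masses; no auxiliary uniform bound is needed), though it reverses the paper's logic: there the identity $\Vol_{X|Z}(L)=\int_Z\MA(P_{X|Z}\varphi)$ is established first, via Corollary \ref{half of Fujita} and the Fujita-type approximation argument, and Theorem \ref{Main Theorem} is then deduced from it. Your treatment of the supremum and of $T_{\min}$ is essentially the paper's: normalize $\psi\le\varphi$, use maximality of the envelope together with Theorem \ref{comparison theorem}, and realize the supremum by $\theta+dd^cP_X\varphi$; for $T_{\min}$ you should state explicitly that any two minimal singular currents are equivalent in singularities, so that Theorem \ref{comparison theorem} identifies $\int_Z\langle(\iota^*T_{\min})^p\rangle$ with $\int_Z\MA(\iota^*P_X\varphi)$.

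The genuine gap is the inequality $\int_Z\MA(P_{X|Z}\varphi)\le\int_Z\MA(\iota^*P_X\varphi)$, the hard half of the second identity. The mechanism you propose --- smooth weights $\varphi_j\searrow P_X\varphi+O(1)$ and ``continuity of non-pluripolar mass along decreasing sequences'' --- is not available: for a decreasing sequence $\psi_j\searrow\psi$ the comparison theorem only gives $\int\MA(\psi_j)\ge\int\MA(\psi)$, and this can be strict (mass is lost precisely when the limit acquires singularities), so the limit passage goes in the wrong direction for what you need; you flag this obstacle yourself but do not resolve it, and it is not a technicality. It is also unclear what applying Theorem \ref{Main Theorem} to the weights $\varphi_j$ would tell you about $\iota^*P_X\varphi$, since the envelopes $P_{X|Z}\varphi_j$ need not converge to $\iota^*P_X\varphi$. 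The paper's proof (Theorem \ref{comparison for equilibrium}) argues through increasing limits instead: by Choquet's lemma there are $\psi_k\in\PSH(X,\theta)$ with $\iota^*\psi_k\nearrow P_{X|Z}\varphi$ a.e.; replacing $\psi_k$ by $\max\{\psi_k,P_X\varphi\}$ makes each $\psi_k$ minimal singular, so $\int_Z\MA(\iota^*\psi_k)=\int_Z\MA(\iota^*P_X\varphi)$ by Theorem \ref{comparison theorem}, while Bedford--Taylor continuity along increasing sequences on $Z\setminus\iota^{-1}(\mathbb{B}_+(L))$ gives $\liminf_k\int_Z\MA(\iota^*\psi_k)\ge\int_Z\MA(P_{X|Z}\varphi)$. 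Some such approximation of $P_{X|Z}\varphi$ from below by restrictions of global $\theta$-psh functions is the missing idea in your argument.
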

These formulas can be seen as generalizations of the main result of \cite{Bou02}. 
If $L$ is ample, $\Vol_{X|Z}(L)$ equals to the intersection number $(L^p.Z)$ which plays an important role in many geometric questions. 
But for general line bundles, these intersection numbers do not work well to describe function-theoretic properties of $L$. 
Our results indicate that $\Vol_{X|Z}(L)$ is the natural generalization of $(L^p.Z)$ for general line bundles.  

Let us explain the point of our proof of Theorem \ref{Main Theorem}. 
We basically follows Berman's approach 
but there are two difficulties in the restricted case.   
First, to deal with general subvarieties, we need a variant of $L^2$-extension theorems. 
The desired extension theorem is the following. 

\begin{thm}\label{L2 extension 2}
  Let $X$ be a smooth projective variety, $Z \subseteq X $ a smooth subvariety, 
  $\omega$ a fixed K\"{a}hler form, and 
  $E \to X$ a holomorphic vector bundle with a smooth Hermitian metric $h_E$. 
  Then there exist constants $N=N(Z, X, h_E, \omega)$ and $C=C(Z, X)>0$ such that 
  the following holds. 
  
  Let $L \to X$ be a holomorphic line bundle with a singular Hermitian metric $h_Le^{-\varphi}$ 
  such that its Chern curvature satisfies
   \begin{equation*}
     \theta + dd^c \varphi \geq N \omega. 
   \end{equation*} 
   Then for any section $s \in H^0(Z, \mathcal{O}(E \otimes L)) $ with 
   \begin{equation*}
     \int_Z \abs{s}^2e^{-\varphi}dV_{\omega, Z}  < + \infty, 
   \end{equation*}
  there exists a section $ \widetilde{s} \in H^0(X, \mathcal{O}(E \otimes L)) $
  such that $\widetilde{s}|_Z=s$ and
   \begin{equation*}
      \int_X \abs{\widetilde{s}}^2e^{-\varphi}dV_{\omega, X} 
      \leq C \int_Z  \abs{s}^2e^{-\varphi}dV_{\omega, Z}
   \end{equation*} 
  holds. 
\end{thm}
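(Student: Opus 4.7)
The statement is a version of the Ohsawa--Takegoshi--Manivel $L^2$-extension theorem for higher-codimension submanifolds, tailored to the compact projective setting so that the single constant $N$ absorbs all fixed curvature contributions from $E$, $\omega$, and an auxiliary singular weight. The plan is the classical ``extend-and-correct'' strategy: build a smooth extension $\tilde s_0$ of $s$ to $X$, then correct it by solving $\bar\partial$ with a weight singular along $Z$ of exactly the order that forces the correction to vanish on $Z$.

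First, choose once and for all a finite atlas $\{U_\alpha\}$ of a tubular neighborhood of $Z$ in $X$ in which $Z$ is cut out by holomorphic functions $f_\alpha=(f_{\alpha,1},\dots,f_{\alpha,r})$ where $r:=\mathrm{codim}_X Z$, chosen to trivialize $N_{Z/X}$. Using a subordinate partition of unity and a cutoff supported in the tube, produce a smooth (non-holomorphic) section $\tilde s_0$ of $E\otimes L$ with $\tilde s_0|_Z=s$ and $\bar\partial \tilde s_0$ vanishing along $Z$. Glue the local potentials $\log|f_\alpha|^2$ into a global quasi-psh function $\Psi$ on $X$ with $\Psi\sim r\log|f_\alpha|^2$ near $Z$, smooth away from $Z$, and with curvature on its smooth part controlled by a constant depending only on $(Z,X,\omega)$. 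Finally solve $\bar\partial u=\bar\partial \tilde s_0$ via H\"{o}rmander's $L^2$-estimate for sections of $E\otimes L$ against the weight $e^{-\varphi-\Psi}$: taking $N$ large enough to dominate $\mathrm{Ric}\,\omega+\mathrm{Curv}(h_E)+dd^c\Psi_{\mathrm{smooth}}$ secures the required curvature positivity, so H\"{o}rmander delivers $u$ with
\[
\int_X |u|^2 e^{-\varphi-\Psi}\,dV_{\omega,X}<\infty.
\]
Because $e^{-\Psi}\sim |f|^{-2r}$ is non-integrable transverse to $Z$, this finiteness forces $u|_Z=0$, so that $\tilde s:=\tilde s_0-u$ is the desired holomorphic extension.

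The main obstacle is the uniform $L^2$-comparison
\[
\int_X |\tilde s_0|^2\,e^{-\varphi-\Psi}\,dV_{\omega,X} \;\leq\; C\int_Z |s|^2\,e^{-\varphi}\,dV_{\omega,Z},
\]
with $C$ depending only on $(Z,X)$ and independent of $\varphi$. A naive Fubini calculation in the normal disk bundle diverges, since $\int_{|w|<\varepsilon}|w|^{-2r}dV=\infty$, and the content of the Ohsawa--Takegoshi--Manivel estimate is to bypass this divergence by running the $\bar\partial$-estimate with an auxiliary twist parameter $\sigma<1$ (the familiar $s(1-s)$ trick), followed by a passage to the limit $\sigma\nearrow 1$ that extracts a finite constant $C=C(Z,X)$. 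Establishing this refined twisted estimate is the hardest technical ingredient; the rest of the construction (the partition-of-unity gluing, the curvature bookkeeping, and the verification that $u|_Z=0$) is routine once the twisted $\bar\partial$ estimate is in place.
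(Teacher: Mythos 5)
Your outline is the same extend-and-correct strategy the paper follows, and you correctly locate the difficulty; but as written the argument has a hole at exactly that point. The whole content of the theorem is the uniform twisted $\bar\partial$-estimate that you announce and then defer as ``the hardest technical ingredient.'' The paper supplies it as Theorem \ref{L2 extension 1}, proved via Ohsawa's modified $L^2$-estimate (Theorem \ref{Ohsawa L2 estimate}): rather than a fixed twist parameter $\sigma<1$, one shrinks the support of the error term by the cutoff $\rho(e^{\psi}/\varepsilon)$, sets $\psi_\varepsilon=\log(\varepsilon+e^{\psi})$, $a_\varepsilon=\chi_\varepsilon(\psi_\varepsilon)$ with $\chi_\varepsilon(t)=\varepsilon-t+\log(1-t)$ and $b_\varepsilon=-\chi_\varepsilon'^2/\chi_\varepsilon''$, and verifies the pointwise bound $|\bar\partial\psi_\varepsilon\wedge\alpha|^2_{\theta'_\varepsilon}\leq(e^{\psi}/\varepsilon)|\alpha|^2$; only then is $\int|g^{(1)}_\varepsilon|^2_{\theta'_\varepsilon}e^{-(n-p)\psi}$ bounded by $C\int_Z|f|^2$ uniformly in $\varepsilon$, and one passes to a weak limit as $\varepsilon\to 0$. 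Without carrying out some version of this computation (including the requirement that $N$ dominate the smooth lower bound $\gamma$ of $\sqrt{-1}\partial\bar\partial\psi$ coming from Demailly's construction of $\psi$ on a neighborhood of $Z$), your plan asserts the theorem rather than proving it. Note also that the paper sidesteps your Ricci term by proving the estimate for $K_X\otimes E'$ and then substituting $E'=K_X^{-1}\otimes E\otimes L$.

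A second, independent gap: the metric $h_Le^{-\varphi}$ is singular, and the H\"{o}rmander/twisted estimates you invoke are proved for smooth weights. The paper handles this by removing a hypersurface $H$ containing $\Supp(\div\sigma)$ for a meromorphic section $\sigma$ of $L$, so that $X\setminus H$ is Stein and $L$ trivializes there; one then approximates $\varphi$ by smooth psh functions $\varphi_k\searrow\varphi$ on an exhaustion $\Omega_k$, applies the extension theorem for each $k$, extracts a weak $L^2$-limit (using that weak convergence of holomorphic sections implies pointwise convergence, so the limit still restricts to $s$ on $Z$), and finally extends across $H$ by the Riemann extension theorem. Some such regularization must appear in your argument for the theorem as stated, since $\varphi$ is merely a singular weight with $\theta+dd^c\varphi\geq N\omega$ as currents.
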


\begin{rem}
It is natural to expect that 
Theorem \ref{L2 extension 2} holds even if $Z$ has some mild singularities. 
But it seems to be unknown. 
$\hfill \Box$
\end{rem}

Theorem \ref{L2 extension 2} can be derived from Theorem $4$ of \cite{Ohs01} by a standard approximation technique. 
See also \cite{Kim10}. It seems most likely that a slight change of the proof of Theorem $4.2$ of \cite{Kim10} can yield Theorem $1.4$.
At any rate we give a self-contained proof in section $5$, as a courtesy to the reader. Theorem \ref{L2 extension 1}  in the present paper corresponds to Theorem $4$ in \cite{Ohs01} (but the situations in the two theorems are slight different). 
Theorem \ref{L2 extension 2} is used in the two critical steps.  
One step is to show the regularity of the restricted equilibrium weight (Theorem \ref{regularity}) 
and the other is to show a lower bound of restricted Bergman kernels (Theorem \ref{fundamental inequality for equilibrium}). 
Second, we only have a weak lower bound in the restricted case since it becomes harder  
to estimate the lower bound of the Bergman kernels precisely as \cite{Ber09}.  
We avoid this difficulty by using a proof of the restricted version of the Fujita-type approximation theorem. 
Note that a part of this strategy already appeared in \cite{BB10}. 
We elaborate this strategy using a weak lower bound 
and the comparison theorem for the Monge-Amp\`{e}re operator. 
From this, one can first get an integral representation of the restricted volume 
and then deduce Theorem \ref{Main Theorem}. 
Compared with \cite{Ber09} in the case $Z=X$, 
our proof of Theorem \ref{Main Theorem} is rather geometric 
thanks to the Fujita-type approximation. 
On the other hand, the convergence result obtained in this paper is weaker than that of \cite{Ber09}. 
It seems to be unknown whether convergence in a strict sense holds in the restricted case.  

\noindent

\section{Monge-Amp\`{e}re operator}

We briefly review the definition of the Monge-Amp\`{e}re operator in this section. 
Fix a closed real smooth $(1,1)$-form $\theta$ defined on $X$. 
An $L^1_{\mathrm{loc}}$-function $ \psi $ in $X$ is called 
{\em $\theta$-plurisubharmonic}\ ($\theta$-psh for short) 
when the associated current 
$ \theta + dd^c \psi $ is positive (in the sense of currents). 
A function which is $\theta$-psh for some $\theta$ is called 
{\em quasi-plurisubharmonic}\ (quasi-psh for short). 
It is known that $\psi$ automatically becomes upper-semicontinuous by this condition. 
We denote the set of all $\theta$-psh functions by $\PSH(X,\theta)$. 
In this paper we are mainly interested in $\theta$ defined as 
$\theta = -dd^c \log h_L$, but this notion is in fact valid for an arbitrary $\theta$. 

Let $n$ be the dimension of $X$. The Monge-Amp\`{e}re operator should be defined as:  
\begin{equation*}
   \psi \mapsto  \MA(\psi) := (\theta + dd^c \psi)^n, 
\end{equation*}
but for general $\psi$, this is nonsense. 
The celebrated result of Bedford-Taylor (\cite{BT76}) tells us 
that the right hand side can be defined as a current 
for $\psi$ at least in the class $L^{\infty}_{\mathrm{loc}} \cap \PSH(X, \theta)$. 
That is, by induction on the exponent $q = 1,2,...,n$, it can be defined as: 
\begin{equation*}
   \int_X (\theta + dd^c \psi)^q \wedge \eta :=  
   \int_X (\theta + dd^c \psi)^{q-1} \wedge (\tau + \psi) dd^c \eta   
\end{equation*}
for each test form 
$ \eta \in C_0^{\infty}(X, \pigwedge^{n-q,n-q} T_X^*)$.  
Here $ \int_X $ denotes the canonical pairing of currents  and test forms, 
and $\tau$ denotes a local $dd^c$-potential of $\theta$. 
This is indeed well-defined and defines a closed positive current, 
because $ \tau + \psi $ is a bounded Borel function 
and $ (\theta + dd^c \psi)^{q-1} $ has measure coefficients by the induction hypothesis 
and by the fact that any closed positive current has measure coefficients. 
Bedford-Taylor's Monge-Amp\`{e}re products have useful continuity properties:  
\begin{prop}\label{continuity properties of BT}
  \begin{equation*}
  ( \theta + dd^c \psi_k )^n \to ( \theta + dd^c \psi ) 
  \ \ \ \ \ \text{ in the sense of currents }  
  \end{equation*} 
  for any sequence of $\theta$-psh functions which satisfies one of the following conditions. 
    \begin{itemize}
    \setlength{\itemsep}{0pt}
     \item[$(1)$]
        $\psi_k \searrow \psi$ pointwise in $X$.    
     \item[$(2)$]
        $\psi_k \nearrow \psi$ for almost every point in $X$. 
     \item[$(3)$] 
        $\psi_k \to \psi$ uniformly in any compact subset of $X$. 
   \end{itemize}
\end{prop}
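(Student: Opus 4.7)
The plan is to argue by induction on the exponent $q \leq n$ of the Monge-Amp\`ere operator, which is the original Bedford-Taylor strategy. The base case $q=1$ reduces to continuity of $dd^c$ on $L^1_{\mathrm{loc}}$-convergent sequences; each of the hypotheses $(1)$--$(3)$ forces $\psi_k \to \psi$ in $L^1_{\mathrm{loc}}(X)$ by dominated convergence, monotone convergence, or uniform convergence respectively, using that $\psi,\psi_k \in L^\infty_{\mathrm{loc}} \cap \PSH(X,\theta)$. For the inductive step, assuming the convergence for exponent $q-1$, the recursive identity
\begin{equation*}
 \int_X (\theta + dd^c \psi_k)^q \wedge \eta = \int_X (\tau + \psi_k)\,(\theta + dd^c \psi_k)^{q-1} \wedge dd^c \eta
\end{equation*}
reduces everything to proving weak convergence of the bounded-weight currents $(\tau + \psi_k)(\theta + dd^c \psi_k)^{q-1}$ to $(\tau + \psi)(\theta + dd^c \psi)^{q-1}$.

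Case $(3)$ is the easiest: I split the difference into $(\psi_k - \psi)(\theta + dd^c\psi_k)^{q-1}$ and $(\tau + \psi)\bigl[(\theta + dd^c\psi_k)^{q-1} - (\theta + dd^c\psi)^{q-1}\bigr]$. The first summand vanishes weakly because $\sup_K|\psi_k - \psi| \to 0$ and the Chern-Levine-Nirenberg inequalities give a uniform mass bound for $(\theta + dd^c\psi_k)^{q-1}$ on compacta. The second vanishes by the inductive hypothesis, applied after approximating the bounded Borel weight $\tau + \psi$ from above and below by smooth functions so that the inductive conclusion on smooth test forms can be invoked.

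Cases $(1)$ and $(2)$ require genuine monotonicity arguments. In $(1)$, the inequality $\psi_k \leq \psi_j$ for $k \geq j$ lets me sandwich $(\tau + \psi_k)(\theta + dd^c\psi_k)^{q-1}$ between currents to which the inductive hypothesis applies; combining a Fatou-type bound with the upper semicontinuity of $\psi$ and then sending $j \to \infty$ pinches both the liminf and limsup to the correct limit. Case $(2)$, with $\psi_k \nearrow \psi$ almost everywhere, is the main obstacle: the natural monotonicity runs the wrong way to apply Fatou directly, and $\psi$ only agrees with the supremum $(\sup_k \psi_k)^*$ outside a pluripolar set. The resolution is Bedford-Taylor's quasi-continuity theorem for quasi-psh functions together with the associated capacity estimates: away from an open set of arbitrarily small Monge-Amp\`ere capacity the convergence $\psi_k \to \psi$ is uniform so case $(3)$ applies, while the mass that $(\theta + dd^c\psi_k)^{q-1}$ assigns to this exceptional set is uniformly small, hence its contribution is negligible in the weak limit.
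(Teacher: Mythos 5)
The paper itself offers no proof of this proposition: it is quoted as the classical continuity theorem of Bedford--Taylor (cited as [BT76]), so your sketch can only be measured against the standard argument. Its skeleton --- induction on the exponent through the recursive identity, Chern--Levine--Nirenberg mass bounds, and quasicontinuity plus capacity estimates --- is indeed that standard route, and your treatment of case $(2)$ correctly identifies the decisive mechanism.

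However, cases $(1)$ and $(3)$ as you present them do not close, and for the same reason. In every case the crux is the convergence $\int f\,(\tau+\psi)\,d\mu_k \to \int f\,(\tau+\psi)\,d\mu$, where $\mu_k$ denotes the (weakly convergent, by the inductive hypothesis) measure coefficients of $(\theta+dd^c\psi_k)^{q-1}$ and $\tau+\psi$ is a fixed bounded weight which is only upper semicontinuous. Weak convergence of positive measures against a bounded upper semicontinuous weight yields only $\limsup_k \int f(\tau+\psi)\,d\mu_k \leq \int f(\tau+\psi)\,d\mu$ for $f\geq 0$; the reverse liminf inequality is exactly what the lower half of your ``sandwich plus Fatou plus upper semicontinuity'' pinch in case $(1)$ requires, and it does not follow from the monotonicity $\psi \leq \psi_k \leq \psi_j$: the lower envelope in your sandwich is $(\tau+\psi)\,\mu_k$ itself, and since $\psi$ is upper (not lower) semicontinuous, Fatou runs the wrong way. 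Likewise in case $(3)$: a bounded upper semicontinuous quasi-psh weight cannot be approximated from below by smooth functions, so the ``from above and below'' approximation is unavailable; with the one-sided decreasing approximation $h_j \searrow \tau+\psi$ by continuous functions you are left with the error term $\int f\,(h_j-(\tau+\psi))\,d\mu_k$, which must be made small uniformly in $k$, and since $h_j-(\tau+\psi)$ decreases to $0$ only pointwise (Dini fails for a discontinuous limit), this uniform control is precisely the quasicontinuity/capacity estimate --- uniformly small $\mu_k$-mass on open sets of small capacity --- that you invoke only in case $(2)$. In short, the capacity mechanism (or an equivalent substitute, e.g.\ an integration-by-parts symmetry argument on the closed manifold $X$) is needed in all three cases; as written, $(1)$ and $(3)$ have a genuine gap at the lower-bound/error-control step.
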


It is still necessary to consider unbounded $\theta$-psh functions. 
On the other hand, for our purpose to investigate asymptotic behaviors of Bergman kernels, 
it is sufficient to deal with some special class of unbounded $\theta$-psh functions 
and we can omit a part of the contribution of unbounded loci. 
\begin{dfn}\label{definition of small unbounded locus}
 A $\theta$-psh function $\psi$ is said to have {\em a small unbounded locus} 
 if the pluripolar set $\psi^{-1}(- \infty)$ is contained 
 in some closed proper algebraic subset $S \subsetneq X$.  
 $\hfill \Box$
\end{dfn}
A quasi-psh function $\psi$ on $X$ 
is said to have {\em algebraic singularities}, 
if it can be locally written as 
\begin{equation}\label{algebraic singularity}
  \psi = c \cdot \log (\abs{f_1}^2 + ... + \abs{f_N}^2) + u 
\end{equation}
for some $c \in \mathbb{Q}_{\geq 0}$, 
non-zero regular functions $f_i$ $( 1 \leq i \leq N )$, 
and a smooth function $u$. 
Every $\theta$-psh function with algebraic singularities has a small unbounded locus. 
If we assume the subvariety $Z$ is smooth,  
$\iota^* \varphi + m^{-1}\log B_{X|Z}(m\varphi) $ gives 
the typical example of $\iota^*\theta$-psh function with algebraic singularities. 
\begin{dfn}\label{definition of MA}
 For a $\theta$-psh function $\psi$ on $X$ with a small unbounded locus, 
 $\MA(\psi)$ is defined to be
 \begin{equation*}
 \big\langle (\theta + dd^c \psi)^n  \big\rangle := 
 \text{ the zero extension of } ( \theta + dd^c \psi)^n . 
 \end{equation*}
 Note that the coefficient of $( \theta + dd^c \psi )^n $ is well-defined 
 as a measure on $X \setminus S$. 
 $\hfill \Box$
\end{dfn}
$\MA(\psi)$ actually defines a closed positive current on $X$ 
by famous Skoda's extension theorem. 
In particular, it has a finite mass on $X$. 
For a proof, see \cite{BEGZ08}, section 1. 
\begin{rem}
In that paper, the {\em non-pluripolar} Monge-Amp\`{e}re product was defined 
in fact for general $\theta$-psh functions on a compact K\"{a}hler manifold.
Note that this choice of ways to define the Monge-Amp\`{e}re operator makes 
$\MA(\psi)$ to have no mass on any pluripolar set 
so ignores some of the singularities of $\psi$. 
For this reason, $\langle (\theta + dd^c \psi)^n \rangle$ no longer has 
continuity property with respect to $\psi$. 
$\hfill \Box$
\end{rem}
We recall the fundamental fact established in \cite{BEGZ08} 
which states that 
the less singular $\theta$-psh function has the larger Monge-Amp\`{e}re mass. 
Recall that given two $\theta$-psh $\psi$ and $\psi'$, 
$\psi$ is said to be less singular than $\psi'$ 
if there exists a constant $C > 0$ such that $\psi' \leq \psi + C$ in $X$. 
We say that a $\theta$-psh function is {\em minimal singular} 
if it is minimal with respect to this partial order. 
When $\psi$ is less singular than $\psi'$ and $\psi'$ is less singular than $\psi$, 
we say that the two functions are equivalent with respect to singularities. 
This defines a equivalence relation in $\PSH(X, \theta)$. 
When $\theta \in c_1(L)$, 
any minimal singular $\theta$-psh function $\psi$ has a small unbounded locus. 
In fact, 
$\psi^{-1}(- \infty) \subseteq \mathbb{B}_+(L) $
holds. 
\begin{thm}[\cite{BEGZ08}, Theorem 1.16]\label{comparison theorem} 
If $\psi, \psi'$ are $\theta$-psh functions 
with small unbounded loci such that
$\psi$ is less singular than $\psi'$, then 
\begin{equation*}
\int_X \MA(\psi')
\leq 
\int_X \MA(\psi)
\end{equation*}
holds. 
\end{thm}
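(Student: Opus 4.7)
The plan is to follow the approach of \cite{BEGZ08}: reduce the unbounded case to the bounded Bedford-Taylor setting via canonical truncations, and then exploit Stokes' theorem together with the plurifine locality of the Monge-Amp\`ere operator. After adding a constant to $\psi$ (which does not alter $\MA(\psi)$) one may assume that $\psi' \leq \psi$ on $X$.

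First I would introduce the truncations $\psi_j := \max(\psi, -j)$ and $\psi'_j := \max(\psi', -j)$; these are bounded $\theta$-psh functions satisfying $\psi'_j \leq \psi_j$, and Stokes' theorem on the compact K\"ahler manifold $X$ yields $\int_X (\theta + dd^c \psi_j)^n = \int_X (\theta + dd^c \psi'_j)^n = \int_X \theta^n$. The plurifine locality of Bedford-Taylor's operator then shows that the measures $\mathbf{1}_{\{\psi > -j\}}(\theta + dd^c \psi_j)^n$ are monotone in $j$ and, by Definition \ref{definition of MA}, their masses increase to $\int_X \MA(\psi)$; analogously for $\psi'$. Consequently the theorem is equivalent to the assertion $\limsup_j D_j(\psi) \leq \limsup_j D_j(\psi')$, where
\begin{equation*}
D_j(\varphi) := \int_X \theta^n - \int_{\{\varphi > -j\}} (\theta + dd^c \max(\varphi, -j))^n
\end{equation*}
is the ``defect mass'' carried by the sublevel set $\{\varphi \leq -j\}$ for the bounded truncation.

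The comparison of defects would be carried out by forming the envelope $w_{j,t} := \max(\psi'_j, \psi_j - t)$ for a parameter $t > 0$ and evaluating $\int_X (\theta + dd^c w_{j,t})^n$ through plurifine locality: on the open set $\{\psi_j > \psi'_j + t\}$ the integrand equals $(\theta + dd^c \psi_j)^n$, while on $\{\psi_j < \psi'_j + t\}$ it equals $(\theta + dd^c \psi'_j)^n$. Balancing this identity against the Stokes identity applied to both $w_{j,t}$ and $\psi'_j$, letting $t \to 0^+$, and observing the inclusions $\{\psi \leq -j\} \subseteq \{\psi' \leq -j\}$ forced by $\psi' \leq \psi$, produces the required comparison $D_j(\psi) \leq D_j(\psi')$ for each $j$; letting $j \to \infty$ then concludes.

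The main obstacle lies in that last step. For bounded $\theta$-psh functions on a compact K\"ahler manifold the total Monge-Amp\`ere masses coincide, so any direct Bedford-Taylor comparison becomes degenerate; the actual gain only emerges in the limit through the contribution of the level sets $\{\varphi = -j\}$. Rigorously controlling the transition mass supported on $\{\psi_j = \psi'_j + t\}$ as $t \to 0^+$, and verifying that the hypothesis of small unbounded loci makes the zero-extension of Definition \ref{definition of MA} commute with these envelope constructions, is where the plurifine-topological machinery of \cite{BEGZ08} is essential.
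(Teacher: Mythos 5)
Your reduction to the ``defect masses'' $D_j$ is fine as bookkeeping, but the step you then claim --- that the envelope $w_{j,t}=\max(\psi'_j,\psi_j-t)$ with $t\to 0^+$ yields $D_j(\psi)\leq D_j(\psi')$ \emph{for each fixed} $j$ --- is not just unproved, it is false. Already for bounded functions (which satisfy the hypotheses trivially) one can relocate curvature mass downward-and-outward: on $X=\mathbb{P}^1$ with $\theta$ the Fubini-Study form, let $\psi$ be a bounded $\theta$-psh function whose curvature is a smoothing of a point mass, concentrated in a small disc on which $\psi<-2$, so that $(\theta+dd^c\max(\psi,-1))$ puts essentially no mass on $\{\psi>-1\}$ and $D_1(\psi)\approx 1$; then set $\psi'=\psi+v$ with $v\leq 0$, $dd^cv=\rho-\sigma$, where $\sigma$ is a small fraction $\varepsilon$ of the curvature of $\psi$ and $\rho$ is a measure of the same mass spread over a region where $\psi>-1/2$. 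Choosing the disc radius and $\varepsilon$ so that the oscillation of $v$ on that region is small, one gets $\psi'\leq\psi$, $\theta+dd^c\psi'=(1-\varepsilon)(\theta+dd^c\psi)+\rho\geq 0$, and $\psi'>-1$ on the support of $\rho$; hence $D_1(\psi')\leq 1-\varepsilon<D_1(\psi)$. So the per-level comparison you reduce to simply does not hold, and only the limiting statement (which is the theorem itself) is true. This is consistent with the fact that all the identities you invoke --- equal total Bedford-Taylor masses and the plurifine decomposition of $\MA(w_{j,t})$ --- are mass-neutral, and that as $t\to 0^+$ the set $\{\psi_j<\psi'_j+t\}$ shrinks to the contact set $\{\psi_j=\psi'_j\}$, which bears no relation to the sublevel sets $\{\psi\leq -j\}$, $\{\psi'\leq -j\}$. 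You flag this step yourself as the ``main obstacle''; it is not a technicality to be patched but the point where the approach breaks.

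Note also that the paper gives no proof of this statement (it is quoted from \cite{BEGZ08}, Theorem 1.16), and the proof there is organized quite differently, in a way your sketch misses at the decisive point: after normalizing $\psi'\leq\psi$ one works with the \emph{unbounded} functions and the envelopes $\psi_t:=\max(\psi',\psi-t)$ with $t\to+\infty$, not with truncations and $t\to 0^+$. Since $\psi-t\leq\psi_t\leq\psi$, each $\psi_t$ has the same singularity type as $\psi$, and the key lemma is that two $\theta$-psh functions with small unbounded loci and the same singularity type have \emph{equal} total non-pluripolar masses; this is proved by integration by parts on $X\setminus S$, justified by cut-off functions adapted to the closed complete pluripolar set $S$ and by the finiteness of the masses (Skoda extension) --- this is exactly where the small-unbounded-locus hypothesis enters. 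Plurifine locality then gives $\MA(\psi_t)=\MA(\psi')$ on $\{\psi'>\psi-t\}$, so $\int_{\{\psi'>\psi-t\}}\MA(\psi')\leq\int_X\MA(\psi_t)=\int_X\MA(\psi)$, and letting $t\to+\infty$ the sets $\{\psi'>\psi-t\}$ increase to the complement of a pluripolar set, which $\MA(\psi')$ does not charge, so monotone convergence concludes. That your outline never actually uses the small-unbounded-locus hypothesis should itself be a warning: monotonicity of non-pluripolar masses without that hypothesis is true but is a substantially harder theorem, proved only later by different methods, and cannot be reached by level-by-level Bedford-Taylor manipulations of the kind you propose.
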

\begin{rem}
  It is unknown that Theorem \ref{comparison theorem} holds for general $\theta$-psh functions. 
  $\hfill \Box$
\end{rem}
The notion of types of $\theta$-psh functions with respect to singularities, 
explained in this subsection, 
are in fact determined by the closed positive currents $ T:= \theta + dd^c \psi $. 
Namely, a closed positive $(1,1)$-current $T \in \alpha$ is said to have 
a small unbounded locus if it can be written: $T = \theta + dd^c \psi$ 
with some $\psi$ which has a small bounded locus. 
Closed positive $(1,1)$-currents with algebraic singularities 
and those with minimal singularities can be defined in the same manner. 

\section{Restricted Bergman kernel asymptotics}

\subsection{Restricted equilibrium weight}

In this subsection, we introduce the notion of the restricted equilibrium weight
and discuss its properties, which we will use later 
to study asymptotics of restricted Bergman kernels. 
Unless otherwise stated, we fix a big line bundle $L$ on a smooth projective variety $X$ 
and a smooth metric $ h_L $. Let $\theta:=-dd^c \log h_L$ be the Chen curvature form. 
Given a subvariety $Z$ of $X$, 
there exists a canonical way to associate any smooth function 
to the $\theta$-psh function on $Z$. 
\begin{dfn}\label{definition of equilibrium weight}
For a smooth weight $\varphi \in C^{\infty}(X;\mathbb{R})$ and a subvariety $Z \subseteq X$,
the restricted equilibrium weight $P_{X|Z} \varphi$ is a function on $Z$ defined as follows:  
 \begin{equation}\label{definition of equilibrium}
   P_{X|Z} \varphi (z)  := 
     \soup \Bigg\{ 
             \iota^*\psi (z) \ \Bigg| 
             \begin{matrix} \ \psi \in \PSH(X, \theta) \\
                            \ \text{ with } \iota^*\psi \leq \iota^*\varphi \ \text{ on } \ Z \\ 
             \end{matrix}
           \Bigg\} 
 \end{equation}
for $ z \in Z$. Here $\iota:Z \hookrightarrow X$ denotes the inclusion map.
If there is no $\psi$ as above, $P_{X|Z} \varphi \equiv -\infty$ by definition. 
$\hfill \Box$
\end{dfn}
In the special case when $Z=X$,  
we use the notation $P_X$ instead of $P_{X|X}$ as in \cite{BB10}.
The symbol $\soup$ appeared in the above definition means
\begin{equation*}
 \soup_{\alpha} f_{\alpha}(z) := \limsup_{w \to z}\big(\sup_{\alpha} f_{\alpha}(w)\big)
\end{equation*}  
which is called {\em the regularized upper envelope} 
for a family of functions $\{ f_{\alpha} \}_{\alpha}$.
It is easily seen that $\iota^*P_X \varphi \leq P_{X|Z} \varphi \leq P_Z  \iota^*\varphi$ holds. 
By a classical result of Choquet (see e.g. \cite{Kli91}, Lemma 2.3.4.) and by the definition of $P_{X|Z} \varphi$, 
we get the following. 
\begin{lem}\label{Choquet lemma}
   Assume that $P_{X|Z}\varphi$ is not identically infinity on $Z$. Then there exists a countable non-decreasing family of $\theta$-psh functions 
   $\{ \psi_k \}_k$ $(k=1,2,...)$ 
   such that $\iota^*\psi_k \nearrow P_{X|Z} \varphi$ a.e., 
   otherwise $P_{X|Z}\varphi \equiv -\infty$.   
   In particular, $P_{X|Z} \varphi \in \PSH(Z, \iota^*\theta)$  
   unless $P_{X|Z}\varphi \equiv -\infty$. 
\end{lem}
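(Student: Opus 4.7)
The plan is to apply Choquet's lemma directly to the pullback family on $Z$ and then pass to running maxima. Set
\begin{equation*}
\mathcal{F} := \{\psi \in \PSH(X,\theta) : \iota^*\psi \leq \iota^*\varphi \text{ on } Z\},
\end{equation*}
so that by definition $P_{X|Z}\varphi = \soup_{\psi\in\mathcal{F}} \iota^*\psi$ on $Z$. The constraint $\iota^*\psi \leq \iota^*\varphi$ uniformly bounds every element of $\{\iota^*\psi : \psi \in \mathcal{F}\}$ above by the constant $\sup_Z \iota^*\varphi < \infty$, so Choquet's lemma (Klimek, Lemma 2.3.4) supplies a countable subfamily $\{\iota^*\tilde\psi_k\}_k$ whose regularized upper envelope on $Z$ still equals $P_{X|Z}\varphi$.

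Next I would replace $\tilde\psi_k$ by the running maxima $\psi_k := \max(\tilde\psi_1,\ldots,\tilde\psi_k)$. Finite maxima preserve $\theta$-plurisubharmonicity on $X$ as well as the inequality constraint on $Z$, so $\psi_k \in \mathcal{F}$, and the sequence is non-decreasing with $\sup_k \iota^*\psi_k = \sup_k \iota^*\tilde\psi_k$. Under the hypothesis $P_{X|Z}\varphi \not\equiv -\infty$, at least one element of $\mathcal{F}$ has non-trivial restriction to $Z$, so from some index $k_0$ onward $\iota^*\psi_k$ is not identically $-\infty$ and, because $Z$ is smooth, belongs to $\PSH(Z,\iota^*\theta)$. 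In the opposite case the sup is identically $-\infty$ on $Z$, its regularization is $-\infty$, and the second alternative of the statement is trivially realized.

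To conclude, I would invoke the classical pluripotential-theoretic fact that a pointwise increasing limit $v := \sup_k v_k$ of $\iota^*\theta$-psh functions on the smooth manifold $Z$ agrees with its upper-semicontinuous regularization $v^*$ everywhere off a pluripolar subset of $Z$, and that $v^*$ itself is $\iota^*\theta$-psh. Applied to $v_k := \iota^*\psi_k$, this yields $\iota^*\psi_k \nearrow P_{X|Z}\varphi$ almost everywhere on $Z$, together with $P_{X|Z}\varphi \in \PSH(Z,\iota^*\theta)$. The only genuinely substantive step—which I regard as the main obstacle—is the use of the restriction fact that a $\theta$-psh function on $X$ pulls back to an $\iota^*\theta$-psh function on the smooth subvariety $Z$ whenever its restriction is not identically $-\infty$; this is where smoothness of $Z$ is essential, and it reduces locally, via a smooth potential of $\theta$, to the corresponding classical statement for honest plurisubharmonic functions.
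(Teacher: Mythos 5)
Your argument is correct and is exactly the route the paper intends: it proves the lemma by citing Choquet's lemma (Klimek, Lemma 2.3.4) applied to the restricted family, together with the standard facts that finite maxima preserve $\theta$-plurisubharmonicity and that an increasing sequence of $\iota^*\theta$-psh functions agrees with its upper-semicontinuous regularization off a pluripolar (hence $d\mu$-null) set. Your spelled-out version, including the running maxima and the handling of the degenerate case, fills in precisely those details, so no further comparison is needed.
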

Now assume that $Z$ is smooth and that $\iota(Z) \nsubseteq \mathbb{B}_+(L)$.
Then $P_{X|Z} \varphi$ has a small unbounded locus contained in $\iota^{-1}(\mathbb{B}_+(L))$.  
Then it follows that the Monge-Amp\`{e}re mass of $P_{X|Z}\varphi$ can be defined as:
\begin{equation*}
 \int_Z \MA(P_{X|Z} \varphi) := \int_Z \big\langle (\iota^*\theta + dd^c P_{X|Z} \varphi)^p \big\rangle
  =\int_{Z \setminus \iota^{-1}(\mathbb{B}_+(L))} (\iota^*\theta + dd^c P_{X|Z} \varphi)^p .
\end{equation*}  
The following is a consequence of Theorem \ref{comparison theorem}, and it enables us to 
substitute $\MA(\iota^*P_X \varphi)$ for $\MA(P_{X|Z} \varphi)$ to estimate the lower bound of the restricted Bergman kernels. 
This is a starting point of our strategy to prove Theorem \ref{Main Theorem}. 
\begin{thm}\label{comparison for equilibrium} 
Assume that $Z$ is smooth and that $\iota(Z) \nsubseteq \mathbb{B}_+(L)$. Then It holds that 
\begin{equation*}
\int_Z \MA(P_{X|Z} \varphi) = \int_Z \MA(\iota^*P_X \varphi). 
\end{equation*} 
\end{thm}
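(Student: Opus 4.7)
The plan is to show that $P_{X|Z}\varphi$ and $\iota^*P_X\varphi$ are equivalent with respect to singularities on $Z$; the equality of their Monge--Amp\`ere masses will then follow by applying Theorem \ref{comparison theorem} in both directions.

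The inequality $\iota^*P_X\varphi \leq P_{X|Z}\varphi$ on $Z$ is immediate from the definitions: since $P_X\varphi \leq \varphi$ on $X$, one has $\iota^*P_X\varphi \leq \iota^*\varphi$ on $Z$, so $P_X\varphi$ is one of the competitors in the envelope defining $P_{X|Z}\varphi$. For the reverse direction I would globalize the restricted envelope by setting
\[
 U(x) := \soup \Bigl\{\, \psi(x) \;\Big|\; \psi \in \PSH(X, \theta),\ \iota^*\psi \leq \iota^*\varphi \text{ on } Z \,\Bigr\}, \qquad x \in X,
\]
so that $U \in \PSH(X, \theta)$. Since $U$ is upper semi-continuous on $X$, its restriction $\iota^*U$ is usc on $Z$, and since $\iota^*U$ dominates $\sup_\psi \iota^*\psi$ pointwise on $Z$, taking the $Z$-regularization of the latter (which equals $P_{X|Z}\varphi$) gives $P_{X|Z}\varphi \leq \iota^*U$ on $Z$. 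On the other hand, a standard compactness argument shows that every $\theta$-psh function on $X$ is majorized by $P_X\varphi$ up to an additive constant: given $\psi \in \PSH(X, \theta)$, the shift $\psi - \sup_X\psi + \inf_X\varphi$ is $\theta$-psh and bounded above by $\varphi$, hence a competitor for $P_X\varphi$, which yields $\psi \leq P_X\varphi + \sup_X\psi - \inf_X\varphi$. Applied to $U$ and restricted to $Z$, this produces $\iota^*U \leq \iota^*P_X\varphi + C$ on $Z$ for some constant $C$. Combining,
\[
 \iota^*P_X\varphi \;\leq\; P_{X|Z}\varphi \;\leq\; \iota^*U \;\leq\; \iota^*P_X\varphi + C
\]
on $Z$, so $P_{X|Z}\varphi$ and $\iota^*P_X\varphi$ differ by a bounded function on $Z$.

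Since $\iota(Z) \nsubseteq \mathbb{B}_+(L)$, both $\iota^*P_X\varphi$ and $P_{X|Z}\varphi$ are $\iota^*\theta$-psh on $Z$ with small unbounded loci contained in $\iota^{-1}(\mathbb{B}_+(L))$, so their non-pluripolar Monge--Amp\`ere masses are well defined. Invoking Theorem \ref{comparison theorem} on $Z$ twice, with each of the two functions in turn playing the role of the less singular one, yields both $\int_Z \MA(\iota^*P_X\varphi) \leq \int_Z \MA(P_{X|Z}\varphi)$ and its reverse, and the equality follows.

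The main obstacle is the middle step, which requires verifying two subtleties: that the $X$-regularization defining $U$ does not spoil the $Z$-boundary condition upon restriction (so that one really has $P_{X|Z}\varphi \leq \iota^*U$), and that $U$ remains comparable to $P_X\varphi$ in singularity type on $X$, despite being assembled from competitors that are constrained only along $Z$. Both points rest on the upper semi-continuity and boundedness above of $\theta$-psh functions on the compact manifold $X$.
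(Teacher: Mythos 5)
Your easy inequality $\iota^*P_X\varphi \leq P_{X|Z}\varphi$ and the final appeal to Theorem \ref{comparison theorem} are fine, but the middle step has a genuine gap: the global envelope $U$ is not a $\theta$-psh function. The family $\{\psi\in\PSH(X,\theta):\ \iota^*\psi\leq\iota^*\varphi\ \text{on}\ Z\}$ is constrained only along $Z$, and when $Z\subsetneq X$ it is not locally uniformly bounded above on $X$. Indeed, since $L$ is big and $p<n$, for $m$ large there is a nonzero section $s\in H^0(X,\mathcal{O}(mL)\otimes\mathcal{I}_Z)$, and $\psi_0:=\frac{1}{m}\log\abs{s}^2_{h_L^m}$ is $\theta$-psh with $\iota^*\psi_0\equiv-\infty$; then $\psi_0+N$ is a competitor for every $N>0$, so the upper envelope is $+\infty$ on the Zariski-open set $\{s\neq 0\}$ and $U\equiv+\infty$ after regularization. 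Consequently the chain $P_{X|Z}\varphi\leq\iota^*U\leq\iota^*P_X\varphi+C$ is vacuous, and the normalization trick $\psi\mapsto\psi-\sup_X\psi+\inf_X\varphi$ cannot be applied uniformly to the family, since $\sup_X\psi$ is unbounded over the competitors. Note also that the intermediate statement you are driving at --- that $P_{X|Z}\varphi$ and $\iota^*P_X\varphi$ are equivalent in singularities on $Z$ --- is strictly stronger than the theorem (which only asserts equality of total masses) and does not appear to be known; had it been available, the whole theorem would indeed be a two-line consequence of Theorem \ref{comparison theorem}.

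The paper's proof is built precisely to avoid this equivalence. The inequality $\geq$ follows from Theorem \ref{comparison theorem} because your easy inequality shows $P_{X|Z}\varphi$ is less singular than $\iota^*P_X\varphi$. For $\leq$, one takes the Choquet family $\psi_k\in\PSH(X,\theta)$ of Lemma \ref{Choquet lemma} with $\iota^*\psi_k\nearrow P_{X|Z}\varphi$ a.e., replaces $\psi_k$ by $\max\{\psi_k,P_X\varphi\}$ so that each $\psi_k$ is minimal singular on $X$ (here the comparison constant is allowed to depend on $k$, which is harmless), deduces $\int_Z\MA(\iota^*\psi_k)=\int_Z\MA(\iota^*P_X\varphi)$ for every $k$ from Theorem \ref{comparison theorem}, and then uses the continuity of the Bedford--Taylor operator under increasing limits on $Z\setminus\iota^{-1}(\mathbb{B}_+(L))$ together with lower semicontinuity of the mass of weak limits to get $\int_Z\MA(P_{X|Z}\varphi)\leq\liminf_k\int_Z\MA(\iota^*\psi_k)$. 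To repair your argument you would need to replace the construction of $U$ by this approximation scheme (or otherwise prove the singularity equivalence, which is not expected).
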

\begin{proof}
By Theorem \ref{comparison theorem}, we only have to show the first equality. 
The one side inequality $\geq$ is also an immediate consequence of Theorem \ref{comparison theorem}.
Since we may take $\psi_k$ minimal singular in Lemma \ref{Choquet lemma} 
(by exchanging $\psi_k$ by $\max\{\psi_k, P_X \varphi \}$),  
 \begin{equation*}
   \int_Z \MA(\iota^*P_X\varphi) = \int_Z \MA(\iota^*\psi_k)
 \end{equation*}
holds by Theorem \ref{comparison theorem}. 
On the other hand, since 
 \begin{equation*}
  (\iota^*\theta + dd^c \iota^*\psi_k)^p 
     \to (\iota^*\theta + dd^c P_{X|Z}\varphi)^p 
    \ \ \text{ on }  \ Z \setminus \iota^{-1}(\mathbb{B}_+(L))
 \end{equation*}
by the continuity property of the Monge-Amp\`{e}re operator, we have 
 \begin{equation*}
   \liminf_{k \to \infty} \int_Z \MA(\iota^*\psi_k) \geq \int_Z \MA(P_{X|Z}\varphi). 
 \end{equation*}
Therefore 
 \begin{equation*}
   \int_Z \MA(\iota^*P_X\varphi) \geq \int_Z \MA(P_{X|Z}\varphi). 
 \end{equation*}
\end{proof}

The next theorem is a key ingredient to represent
$\MA(P_{X|Z} \varphi)$ explicitly by $\varphi$. 
It states that the gradient of $P_{X|Z}\varphi$ is locally Lipschitz on $Z \setminus \mathbb{B}_+(L)$.  
\begin{thm}\label{regularity}
  Assume that $Z$ is smooth 
  and $\iota(Z) \nsubseteq \mathbb{B}_+(L)$. 
  Then $P_{X|Z}\varphi$ has Lipschitz continuous first derivatives 
  outside of $\iota^{-1}(\mathbb{B}_+(L))$. 
  Namely, 
  \begin{equation*}
   P_{X|Z} \varphi \in C^{1,1}\big(Z \setminus \iota^{-1}(\mathbb{B}_+(L))\big).
  \end{equation*}
  Moreover, 
  \begin{equation*}
   (\iota^*\theta + dd^c P_{X|Z} \varphi )^p = (\iota^*\theta + dd^c \iota^*\varphi )^p 
   \ \ \ \ \  in \ the \ set \ \ \ \{ P_{X|Z} \varphi = \iota^*\varphi \} \setminus \iota^{-1}(\mathbb{B}_+(L))
  \end{equation*}
  a.e.\ with \  respect \  to \ $ d \mu $. 
\end{thm}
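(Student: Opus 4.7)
The plan is to adapt the Bergman-kernel approach of \cite{Ber09} to the restricted setting, with Theorem \ref{L2 extension 2} as the key new ingredient replacing standard Ohsawa--Takegoshi extension from a point. I would introduce the Bergman approximations
\begin{equation*}
 \psi_m(z) := \iota^*\varphi(z) + \frac{1}{m}\log B_{X|Z}(m\varphi)(z),
\end{equation*}
which are $\iota^*\theta$-psh on $Z$ with algebraic singularities, admitting the extremal characterization $\psi_m(z) = \sup\{m^{-1}\log\abs{\iota^*\tilde s}^2_{h_L^m}(z) : \tilde s \in H^0(X, \mathcal{O}(mL)),\ \norm{\iota^*\tilde s}_{m\varphi} \leq 1\}$. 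The proof then consists of three steps: $(i)$ the convergence $\psi_m \to P_{X|Z}\varphi$; $(ii)$ uniform $C^{1,1}$ bounds on $\psi_m$ on compacts of $Z\setminus\iota^{-1}(\mathbb{B}_+(L))$; and $(iii)$ the Monge--Amp\`ere identity on the contact set.

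For $(i)$, the upper bound $\psi_m \leq P_{X|Z}\varphi + O(\log m/m)$ comes from the $\theta$-plurisubharmonicity on $X$ of $m^{-1}\log\abs{\tilde s}^2_{h_L^m}$: the normalization $\norm{\iota^*\tilde s}_{m\varphi} \leq 1$ forces $m^{-1}\log\abs{\iota^*\tilde s}^2_{h_L^m} \leq \iota^*\varphi + O(\log m/m)$ on $Z$ by a submean-value argument, so the shifted lift is admissible in the supremum defining $P_{X|Z}\varphi$. For the lower bound, given a $\theta$-psh $\psi$ on $X$ with algebraic singularities and $\iota^*\psi \leq \iota^*\varphi$, writing $\psi = k^{-1}\log\sum\abs{\sigma_j}^2_{h_L^k} + u$ and using the tensor powers $\sigma_j^{\otimes m/k}$ as test sections gives $\psi_m \geq \iota^*\psi - O(1/m)$ pointwise off the singular locus; Lemma \ref{Choquet lemma} then yields $\psi_m \to P_{X|Z}\varphi$ off a pluripolar set.

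For $(ii)$, the Hessian lower bound $dd^c\psi_m \geq -\iota^*\theta$ is automatic from $\iota^*\theta$-plurisubharmonicity. For the upper bound I would, at each $z_0 \in Z\setminus\iota^{-1}(\mathbb{B}_+(L))$, first construct holomorphic sections of $mL$ on $Z$ realizing prescribed $0$- and $1$-jets at $z_0$ via standard $L^2$-methods on $Z$, then apply Theorem \ref{L2 extension 2}, after an auxiliary ample twist to meet its curvature hypothesis, to extend them to $\tilde s \in H^0(X, \mathcal{O}(mL))$ with $L^2$-norm on $Z$ bounded by a polynomial in $m$ times $e^{m\iota^*\varphi(z_0)}$. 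The extremal characterization of $B_{X|Z}(m\varphi)$ converts this into pointwise upper bounds on $\psi_m(z_0) - \iota^*\varphi(z_0)$ and on $\partial\psi_m(z_0)$, and scaling the prescribed $1$-jet in an $m^{-1/2}$-ball around $z_0$ yields $dd^c\psi_m \leq C\omega$ uniformly in $m$, locally uniformly in $z_0$. Arzel\`a--Ascoli together with $(i)$ then produces $P_{X|Z}\varphi \in C^{1,1}_{\mathrm{loc}}(Z\setminus\iota^{-1}(\mathbb{B}_+(L)))$. Finally, $(iii)$ is the standard Calder\'on--Zygmund fact that two $C^{1,1}$ functions have equal pure and mixed second derivatives almost everywhere on the set on which they coincide, applied to $P_{X|Z}\varphi$ and $\iota^*\varphi$.

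The main obstacle will be the upper Hessian estimate in $(ii)$: Theorem \ref{L2 extension 2} provides only integrated $L^2$-control on the extended section, and converting this into pointwise Hessian bounds on $\psi_m$ uniform in $m$ is the restricted analogue of the off-diagonal peak-section estimates of Tian, Catlin and Zelditch. Carrying it out requires construction of extensions realizing prescribed $1$-jets on $Z$ with careful bookkeeping of the $m$-dependence and of the auxiliary curvature correction needed to apply Theorem \ref{L2 extension 2}, and it is precisely here that the extension theorem enters essentially.
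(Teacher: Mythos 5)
Your route --- establish uniform estimates for $\psi_m = \iota^*\varphi + m^{-1}\log B_{X|Z}(m\varphi)$ and pass to the limit --- is not the paper's argument, and it breaks down exactly at the step you yourself flag as the main obstacle. Theorem \ref{L2 extension 2} combined with peak sections carrying prescribed $0$- and $1$-jets only produces \emph{lower} bounds on $B_{X|Z}(m\varphi)$ (hence lower bounds on $\psi_m$ and, at best, first-order information at the chosen point); an upper bound $dd^c\psi_m \leq C\omega$ uniform in $m$ would require a two-sided comparison of the kernel at points at distance $O(m^{-1/2})$ with quadratic precision, i.e.\ off-diagonal expansions of Tian--Catlin--Zelditch type. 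Such expansions are available only where the curvature $\theta + dd^c\varphi$ is positive; on the region where $P_{X|Z}\varphi < \iota^*\varphi$ the kernel decays like $e^{-m(\iota^*\varphi - P_{X|Z}\varphi)}$ and the Hessian of $m^{-1}\log B_{X|Z}(m\varphi)$ is governed by the envelope itself, so a uniform $C^{1,1}$ bound on $\psi_m$ is essentially equivalent to (in fact stronger than) the theorem you are trying to prove: the argument is circular at its core. Note that the paper's logical order is the reverse of yours: regularity is proved first and then feeds, via the representation formula (Theorem \ref{representation formula for equilibrium}), into the two-sided kernel bounds of Theorem \ref{fundamental inequality for equilibrium} and the $C^0$ (not $C^{1,1}$) convergence of $u_m$; even for $Z=X$ no uniform second-order bound on $u_m$ is claimed anywhere. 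There is also a smaller gap in your step $(i)$: a general competitor $\psi$ in the envelope is not of global Fubini--Study form $k^{-1}\log\sum_j\abs{\sigma_j}^2_{h_L^k}+u$ with $\sigma_j \in H^0(X,\mathcal{O}(kL))$, so reducing to such competitors already requires a global approximation statement, which in the paper is exactly what the two applications of Theorem \ref{L2 extension 2} in the proof of Theorem \ref{fundamental inequality for equilibrium} accomplish.

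The missing idea is the mechanism the paper actually uses, following Berman: transfer the problem to the total space $Y$ of $L^*$ by replacing $P_{X|Z}\varphi$ with the logarithmically homogeneous psh function $\chi_{P_{X|Z}\varphi}(x,w) = \log\abs{w}^2_{h_L^{-1}} + P_{X|Z}\varphi(x)$, and run Bedford--Taylor's translation argument for the Dirichlet problem there. On the compactification $\widehat{Y} = \mathbb{P}(\mathcal{O}(-L)\oplus\mathcal{O})$ one gains enough positivity after twisting by $\pi^*L^{k_0}$ and the fiberwise hyperplane bundle, and Theorem \ref{L2 extension 2} is applied twice to produce holomorphic vector fields $V_1,\dots,V_{p+1}$ on $\pi^{-1}(Z)$, extendable to $Y$, linearly independent at the given point and vanishing to high order along the zero section and along $\pi^{-1}(\Supp E)$ for a Kodaira decomposition $L = A + E$ (Lemma \ref{construction of vector fields}). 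The flows of these fields almost preserve the class of competitors in the envelope, and comparing $\chi_{P_{X|Z}\varphi}$ with its translates yields the uniform second-order difference-quotient bound, hence $C^{1,1}$ regularity off $\iota^{-1}(\mathbb{B}_+(L))$; this is where the extension theorem genuinely enters, not in peak-section estimates. Your step $(iii)$ --- a.e.\ equality of second derivatives of $P_{X|Z}\varphi$ and $\iota^*\varphi$ on the contact set --- is fine once the $C^{1,1}$ statement is available.
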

\begin{proof} 
The proof is almost the same as $Z=X$ case in \cite{Ber09} 
except that in the restricted case we need  the Ohsawa-Takegoshi-type $L^2$-extension theorem 
for an arbitrary smooth subvariety (Theorem \ref{L2 extension 2}). 
We sketch the proof and omit the detail. 

Let $Y$ be the total space of the dual line bundle $L^*$, identifying the base $X$ with 
its embedding as the zero-section in $Y$, and  $\pi : Y \to X$ be the projection map. 
Given $\psi \in \PSH(X, \theta)$, one can associate a psh function $\chi_{\psi}$ defined on $Y$, as follows: 
\begin{equation*}
  \chi_{\psi}(x,w) := \log \abs{w}^2_{h_L^{-1}} + \psi(x) \ \ \ \ \ (x \in X, w \in L_x). 
\end{equation*}

Berman's original argument is modeled on the proof of 
Bedford-Taylor for $C^{1,1}$-regularity of the solution of the Dirichlet problem 
for the complex Monge-Amp\`{e}re equation in the unit-ball in $\mathbb{C}^n$. 
As opposed to the unit ball, $X$ has no global holomorphic vector fields. 
But one can reduce the regularity problem of $P_X \varphi$ on $X$ to a problem of $\chi_{P_X \varphi}$ on $Y$, 
where enough many vector fields exist. 
This argument is still valid in the restricted case once one can construct the suitable vector fields on $\pi^{-1}(Z)$ extended to $Y$. 

For a proof, it is enough to show the regularity of $\chi_{P_X \varphi}$ at 
any given point $y_0 \in \pi^{-1} (Z \setminus \mathbb{B}_+(L) ) \setminus Z$. 
By Kodaira's lemma, there exists an effective divisor $E$ on $X$ such that 
$y_0 \notin \pi^{-1}(\Supp E)$ and $mL = A + E$ hold 
with some positive integer $m$ and ample $\mathbb{Z}$-divisor $A$.  
We may assume $m=1$ for the proof of the Theorem \ref{regularity} 
since $mP_{X|Z}\varphi = P_{X|Z}(m\varphi)$ holds. 
By this decomposition, 
we can construct some $\psi_0= \psi_A + \psi_E$, $\theta_L =\theta_A + \theta_E$ 
such that 
$\theta_A + dd^c\psi_A > 0$ is smooth and $\theta_E + dd^c\psi_E \geq 0$ 
has singularities only on $E$. 
Indeed, it is enough to set 
$\theta_E := -dd^c \log h_{E,\alpha} $, $\psi_E := \log \abs{f_{\alpha}}^2 + \log h_{E,\alpha}$ 
for some smooth metric $h_E$ and system of local equations $\{ f_{\alpha} \}$. 

\begin{lem}\label{construction of vector fields} 
  There exist  holomorphic vector fields 
  $V_1,...,V_{p+1}$ on $\pi^{-1}(Z)$ satisfying the following properties. 
   \begin{itemize}
    \setlength{\itemsep}{0pt}
     \item[$(1)$]
        $V_1,...,V_{p+1}$ is linearly independent at $y_0$. 
     \item[$(2)$]
        There exist holomorphic vector fields 
        $\widetilde{V}_1,...,\widetilde{V}_{p+1}$ on $Y$ 
        such that 
        $\widetilde{V}_i|_{\pi^{-1}(Z)} = V_i$ $(1 \leq i \leq p+1)$.    
     \item[$(3)$] 
        For any fixed $k \in \mathbb{N}$, $\widetilde{V}_i $ $(1 \leq i \leq p+1)$ can be chosen to have zeros of order at least $k$ along $X$ 
        and $\pi^{-1}(\Supp E)$. 
        To be precise, 
      \begin{equation*}
                  \abs{\widetilde{V}_i} \leq C(k) \cdot \abs{w}^k, \ \  
                  \abs{\widetilde{V}_i} \leq C(k) \cdot \abs{f_{\alpha}(z)}^k  
      \end{equation*} 
        hold locally in the set $\{ \chi_{\psi_0} \leq 1 \}$ for some constant $C(k)$ depending on $k$. 
   \end{itemize}
\end{lem}
\begin{proof}
  Let $\widehat{Y} := \mathbb{P}(\mathcal{O}(-L)\oplus\mathcal{O}))$ 
  be the Zariski closure of $Y$. 
  Consider the line bundle 
  $\pi^*L^{k_0} \otimes H_{\mathbb{P}(\mathcal{O}(-L)\oplus\mathcal{O})}$ 
  on $\widehat{Y}$ and its metric 
 \begin{equation*}  
    h_{{k_0},\alpha} := \pi^* h_{L,\alpha}^{k_0} + \log (1+ e^{\chi_{\varphi}}) 
 \end{equation*}
  with weight 
 \begin{equation*}
    \psi_{k_0} := \pi^*\big( k_0 (\psi_A + (1+k_0^{-1/2})\psi_E) \big). 
 \end{equation*} 
 Here $H_{\mathbb{P}(\mathcal{O}(-L)\oplus\mathcal{O})}$ denotes 
 the fiberwise hyperplane bundle. 
 For $w$-direction, $\log(1+e^{\chi_{\varphi}})$ has the strictly positive curvature and 
 for $x$-direction, $\psi_A + (1+k_0^{-1/2})\psi_E$ is $\theta_L$-strictly positive 
 if we take $k_0$ sufficiently large. 
 Thus $h_{k_0}e^{-\psi_{k_0}}$ has strictly positive curvature in $\widehat{Y}$. 
 From this, taking sufficiently large $k_1$, 
 we can use Theorem \ref{L2 extension 2} 
 to get holomorphic sections  
 \begin{equation*}
  V_1,...,V_{p+1} \in H^0(\widehat{\pi^{-1}(Z)}, 
                            \mathcal{O}(T'_{\widehat{\pi^{-1}(Z)}} \otimes (\pi^*L^{k_0}\otimes H_{\mathbb{P}(\mathcal{O}(-L)\oplus\mathcal{O})})^{k_1})) 
 \end{equation*}
 which correspond to some basis $V_{1,0},...,V_{p+1,0}$ of $T'_{\pi^{-1}(Z), y_0}$. 
 If we use Theorem \ref{L2 extension 2} once more and take further large $k_1$, 
 it can be seen that 
  $V_1,...,V_{p+1}$ are restrictions of some 
 \begin{equation*}
  \widetilde{V}_1,...,\widetilde{V}_{p+1} \in H^0(\widehat{Y}, 
                            \mathcal{O}(T'_{\widehat{Y}} \otimes (\pi^*L^{k_0}\otimes H_{\mathbb{P}(\mathcal{O}(-L)\oplus\mathcal{O})})^{k_1})) 
 \end{equation*} 
 which are integrable with respect to $(h_{k_0}e^{-\psi_{k_0}})^{k_1}$. 
 Note that 
 $H_{\mathbb{P}(\mathcal{O}(-L)\oplus\mathcal{O})}|_{Y}$ is trivial 
 and that 
 $\pi^*L = -[X]$ (the dual of the line bundle defined by the divisor $X \subseteq Y$). 
 Therefore $\widetilde{V}_1,...,\widetilde{V}_{p+1}$ can be identified 
 with holomorphic vector fields over $Y$ 
 having zeros of order at least $k_0k_1$ along $X$. 
 Further, by the integrability condition, we get
 \begin{equation*}
  \abs{\widetilde{V}_i(x,w_{\alpha})} \leq C \cdot 
                         \bigg(  \abs{w_{\alpha}}^{k_0} \cdot \abs{f_{\alpha}(x)}^{k_0(1+k_0^{-1/2})} \cdot \abs{w_{\alpha}} \bigg)^{k_1}
 \end{equation*}
 hence 
 \begin{equation*}
  \abs{\widetilde{V}_i(x,w_{\alpha})} \leq C(k) \cdot 
                         \bigg( \abs{w_{\alpha}} \cdot \abs{f_{\alpha}(x)} \bigg)^{(k_0 + 1)k_1} \cdot \abs{f_{\alpha}(x)}^k. 
 \end{equation*} 
 The boundedness of $\abs{w_{\alpha}} \cdot \abs{f_{\alpha}(x)}$ in $\{ \chi_{\psi_0} \leq 1 \}$ implies the conclusion. 
\end{proof}
Actually, Lemma \ref{construction of vector fields} assures the existence of desired vector fields $V_i$ ($1 \leq i \leq p$) 
and one can repeat the proof of Theorem 3.4 in \cite{Ber09}. 
\end{proof}
On the other hand, the repeating the proof of the $Z=X$ case in Proposition 3.1 of \cite{Ber09} 
gives the following. 
\begin{lem}
   In the situation of Theorem \ref{regularity}, 
 \begin{itemize}\label{lemma for representation formula}
  \setlength{\itemsep}{0pt}
   \item[$(1)$]
     \begin{equation*}
          P_{X|Z} \varphi = \iota^* \varphi \ \ a.e.\ with \ respect \ to \ \MA(P_{X|Z} \varphi).
     \end{equation*}
   \item[$(2)$]
     \begin{equation*}
          P_{X|Z} \varphi(z_0) = \iota^* \varphi(z_0) \ 
           \Rightarrow \ (\iota^*\theta + dd^c \iota^* \varphi)(z_0) \geq 0.
     \end{equation*}
 \end{itemize}
\end{lem}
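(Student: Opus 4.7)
The plan is to adapt Berman's argument for the $Z=X$ case in \cite{Ber09}, Proposition 3.1, to the restricted setting, working on $Z_0 := Z \setminus \iota^{-1}(\mathbb{B}_+(L))$, where Theorem \ref{regularity} guarantees that $P_{X|Z}\varphi$ is continuous (in fact $C^{1,1}$). In particular the set $U := \{P_{X|Z}\varphi < \iota^*\varphi\} \cap Z_0$ is open, and both assertions of the lemma are local on $Z$.

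For assertion (2), I would argue by a strong minimum principle. Setting $f := \iota^*\varphi - P_{X|Z}\varphi$, one has $f \geq 0$ with $f(z_0) = 0$, and since $P_{X|Z}\varphi \in \PSH(Z, \iota^*\theta)$ the distributional inequality $dd^c f \leq \iota^*\theta + dd^c \iota^*\varphi$ holds near $z_0$. Suppose for contradiction that $(\iota^*\theta + dd^c \iota^*\varphi)(z_0)$ has a strictly negative eigenvalue along some direction $v \in T_{z_0}Z$. Restricting $f$ to a small holomorphic disk through $z_0$ tangent to $v$ then yields a continuous nonnegative function which is strictly superharmonic near its center and vanishes there, contradicting the strong minimum principle for superharmonic functions.

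For assertion (1), I would prove the contrapositive: $\MA(P_{X|Z}\varphi)$ vanishes on $U$. Suppose some $z_0 \in U$ lies in the support of $\MA(P_{X|Z}\varphi)$, and choose a small coordinate ball $B \Subset U$ around $z_0$ on which $\iota^*\varphi - P_{X|Z}\varphi \geq \delta > 0$ and on which the oscillations of both $P_{X|Z}\varphi$ and $\iota^*\varphi$ are much smaller than $\delta$. Solve the Dirichlet problem for the homogeneous complex Monge--Amp\`ere equation on $B$ with boundary data $P_{X|Z}\varphi|_{\partial B}$, and glue the solution with $P_{X|Z}\varphi$ outside $B$. The result is an $\iota^*\theta$-psh function $\tilde P$ on $Z$ which dominates $P_{X|Z}\varphi$, is strictly larger on a set of positive measure in $B$ (since the comparison principle forces the original Monge--Amp\`ere mass inside $B$ to be killed), and still satisfies $\tilde P \leq \iota^*\varphi$ on $Z$ thanks to the $\delta$-gap together with the oscillation bound.

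The main obstacle is then to lift $\tilde P$ back to some $\psi \in \PSH(X, \theta)$ with $\iota^*\psi \leq \iota^*\varphi$, which would contradict the definition of $P_{X|Z}\varphi$ and close the argument; this is exactly where Theorem \ref{L2 extension 2} has to enter, in parallel with its use in the proof of Theorem \ref{regularity}. Concretely, I would approximate $\tilde P$ on $Z$ from above by weights of the form $(mk)^{-1}\log \sum_j |s_{m,j}|^2_{h_L^{mk}}$, where $s_{m,j} \in H^0(Z, \mathcal{O}(mkL))$ are $L^2$ with respect to $e^{-mk \iota^*\varphi}$, then extend the $s_{m,j}$ to sections over $X$ with uniformly controlled $L^2$-norms via Theorem \ref{L2 extension 2}, and take normalized logarithms to produce the desired $\psi$ in the limit. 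This controlled extension is the only genuinely new ingredient compared to Berman's proof of Proposition 3.1; once it is in place, the local maximality argument above closes the contradiction.
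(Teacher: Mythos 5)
The paper's own ``proof'' of this lemma is a single sentence deferring to Proposition 3.1 of \cite{Ber09}, so you are reconstructing an argument the author left implicit; your reconstruction follows the intended balayage strategy. Part (2) is correct as you present it: the sub-mean-value / strong minimum principle argument on holomorphic disks through $z_0$ uses only that $P_{X|Z}\varphi$ is $\iota^*\theta$-psh and touches $\iota^*\varphi$ from below at $z_0$, and it goes through verbatim (it does not even need the $C^{1,1}$ regularity).

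For part (1) you have correctly isolated the one point where the restricted case genuinely differs from the case $Z=X$: the locally modified function $\tilde P$ on $Z$ is not a priori a competitor for the envelope (\ref{definition of equilibrium}), which ranges over restrictions of \emph{global} $\theta$-psh functions on $X$. But the fix you propose is exactly the crux, and as sketched it does not go through. Theorem \ref{L2 extension 2} requires a singular metric $h_L^{mk}e^{-\Phi}$ defined on all of $X$ with curvature $\geq N\omega$; the weight $mk\,\tilde P$ governing your $L^2$ condition on the sections $s_{m,j}$ lives only on $Z$ and is not known to be the restriction of such a metric --- that is essentially what part (1) is trying to establish. To feed your sections into Theorem \ref{L2 extension 2} you need a global weight $\Phi_{mk}$ with both $\theta_{mk}+dd^c\Phi_{mk}\geq N\omega$ and $\iota^*\Phi_{mk}\geq mk\,\tilde P-O(1)$ on $Z$, so that $\int_Z\abs{s_{m,j}}^2e^{-\iota^*\Phi_{mk}}$ is controlled by $\int_Z\abs{s_{m,j}}^2e^{-mk\tilde P}d\mu$. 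The available candidates each fail one requirement: $mk\varphi$ (twisted by Kodaira's lemma) has curvature only $\geq -Cmk\,\omega$ since $\varphi$ is not $\theta$-psh, while weights of the form $(mk-m_0)\psi_k+m_0\psi_0$ built from Lemma \ref{Choquet lemma} satisfy $\iota^*\psi_k\leq P_{X|Z}\varphi\leq\tilde P$, which is the wrong direction near the poles of $\psi_k$ and $\psi_0$, so the $L^2$ norms do not transfer. This is precisely the point where the proof of Theorem \ref{fundamental inequality for equilibrium} succeeds only because the weight used on $Z$ there is \emph{by construction} the restriction of the global $(m-m_0)\psi_k+m_0\psi_0$. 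Until you exhibit a compatible pair of weights for the two extension steps (point $\to Z$, then $Z\to X$) and verify the norm transfer, the contradiction in part (1) is not closed. Everything before that point --- openness of $U$, the maximum-principle bound $\tilde P\leq\iota^*\varphi$ via the $\delta$-gap, $\MA(\tilde P)=0$ on $B$, and the reduction of (1) to the lifting statement --- is sound.
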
 
  As a consequence of Theorem \ref{regularity} 
  and Lemma \ref{lemma for representation formula} $(1)$, 
  we obtained the desired representation formula for  $\MA(P_{X|Z} \varphi)$ as in the case $Z=X$. 
\begin{thm}\label{representation formula for equilibrium}
  Assume that $Z$ is smooth 
  and $\iota(Z) \nsubseteq \mathbb{B}_+(L)$. 
  Then the identity 
 \begin{equation}\label{representation formula}
  \MA(P_{X|Z} \varphi )= \mathbf{1}_{\{ P_{X|Z} \varphi 
                   =\iota^* \varphi \}} \cdot ( \iota^* \theta + dd^c \iota^* \varphi )^p
 \end{equation}
 holds. 
 Here $ \mathbf{1}_{\{P_{X|Z} \varphi = \iota^* \varphi \}} $ denotes 
 the characteristic function of the set $\{P_{X|Z} \varphi = \iota^* \varphi \}$. 
 In particular, the measure $\MA(P_{X|Z} \varphi )$ has $L^{\infty}$-density 
 with respect to $d \mu$. 
\end{thm}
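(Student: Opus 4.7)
The plan is to assemble Theorem \ref{regularity} and Lemma \ref{lemma for representation formula}(1) directly: the former identifies the density of $\MA(P_{X|Z}\varphi)$ with that of the model form $(\iota^*\theta + dd^c\iota^*\varphi)^p$ on the contact set $\{P_{X|Z}\varphi = \iota^*\varphi\}$, while the latter confines the mass of $\MA(P_{X|Z}\varphi)$ to that contact set. Multiplying the two statements together yields the desired representation.

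In more detail, I would set $U := Z \setminus \iota^{-1}(\mathbb{B}_+(L))$. By the hypothesis $\iota(Z) \nsubseteq \mathbb{B}_+(L)$ together with the irreducibility of the smooth subvariety $Z$, the complement $Z \setminus U$ is a proper analytic subset of $Z$, hence $d\mu$-null. Theorem \ref{regularity} gives $P_{X|Z}\varphi \in C^{1,1}(U)$, so on $U$ the Bedford--Taylor product $(\iota^*\theta + dd^c P_{X|Z}\varphi)^p$ is an honest measure with bounded density against $d\mu$; its zero extension across $\iota^{-1}(\mathbb{B}_+(L))$ is, by Definition \ref{definition of MA}, exactly $\MA(P_{X|Z}\varphi)$. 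By Lemma \ref{lemma for representation formula}(1), this measure assigns no mass to $\{P_{X|Z}\varphi \neq \iota^*\varphi\}$, so multiplication by the indicator $\mathbf{1}_{\{P_{X|Z}\varphi = \iota^*\varphi\}}$ is an identity. The second conclusion of Theorem \ref{regularity} then replaces $(\iota^*\theta + dd^c P_{X|Z}\varphi)^p$ by $(\iota^*\theta + dd^c \iota^*\varphi)^p$ pointwise $d\mu$-a.e. on $U \cap \{P_{X|Z}\varphi = \iota^*\varphi\}$. Since both sides of \eqref{representation formula} carry no mass on the $d\mu$-null set $Z \setminus U$, the identity propagates to all of $Z$.

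The $L^\infty$-density claim drops out for free: because $\iota^*\varphi$ is smooth, $(\iota^*\theta + dd^c \iota^*\varphi)^p$ is a smooth top-degree form on $Z$, hence has bounded density with respect to $d\mu$, and multiplication by the characteristic function preserves this bound. I do not see any substantive obstacle here; all of the heavy lifting—the $C^{1,1}$-regularity (which required Theorem \ref{L2 extension 2}) and the contact-set analysis encapsulated in Lemma \ref{lemma for representation formula}—has already been done. What remains is purely bookkeeping: checking compatibility of the zero-extension convention, the indicator, and the two a.e.\ identifications on $U$.
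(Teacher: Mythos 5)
Your proposal is correct and follows exactly the route the paper takes: the theorem is stated there as an immediate consequence of Theorem \ref{regularity} (the $C^{1,1}$-regularity and the a.e.\ identification of the two Monge--Amp\`ere densities on the contact set) combined with Lemma \ref{lemma for representation formula}(1) (which confines the mass to the contact set). Your write-up merely spells out the bookkeeping that the paper leaves implicit, and does so accurately.
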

%

\subsection{Restricted Bergman kernel asymptotics}

From now on, we compare $P_{X|Z} \varphi $ with $B_{X|Z}(m \varphi)$ in detail. 
Fix notations as in the previous subsections. 
In this subsection we always assume that 
$Z$ is a smooth subvariety of $X$ and that $\iota(Z) \nsubseteq \mathbb{B}_+(L)$ holds. 
First we specify the upper bound of restricted Bergman kernels and show the half of our main result. 
\begin{prop}\label{upper bound 4} 
 \begin{equation*}
  \limsup_{m \to \infty} \frac{B_{X|Z}(m \varphi)}{m^p/p!} d \mu
   \leq \MA(P_{X|Z} \varphi).
 \end{equation*}
\end{prop}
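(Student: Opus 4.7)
The plan is to adapt Berman's approach in the unrestricted case $Z=X$, using crucially that $P_{X|Z}\varphi$ is defined as an upper envelope of restrictions of globally $\theta$-plurisubharmonic functions, rather than as an intrinsic envelope on $Z$.

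I set
\[
u_m := \iota^*\varphi + \frac{1}{m}\log B_{X|Z}(m\varphi) = \frac{1}{m}\log\sum_i |s_{m,i}|^2_{h_L^m},
\]
which is $\iota^*\theta$-plurisubharmonic on $Z$ with algebraic singularities contained in $\iota^{-1}(\mathbb{B}_+(L))$. The first technical step is the uniform upper bound
\[
u_m \leq \iota^*\varphi + C(K)\,\frac{\log m}{m}
\]
on each compact $K\subset Z\setminus\iota^{-1}(\mathbb{B}_+(L))$. This is the standard Bergman sub-mean estimate: applying the sub-mean inequality for $|s_{m,i}|^2$ on a ball of radius $r_m\sim m^{-1/2}$, together with the smoothness of $\iota^*\varphi$, controls $|s(z)|^2_{m\varphi}$ by a constant multiple of $m^p\|s\|^2_{m\varphi}$, and summing over an orthonormal basis delivers the bound.

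The core step upgrades this to $u_m \leq P_{X|Z}\varphi + C(\log m)/m$. The key simple observation is that every $s_{m,i}\in H^0(X|Z,\mathcal{O}(mL))$ is, by definition, the restriction of some $\tilde s_{m,i}\in H^0(X,\mathcal{O}(mL))$; any extension suffices, and no $L^2$ control on it is needed. Setting
\[
\tilde u_m := \frac{1}{m}\log\sum_i |\tilde s_{m,i}|^2_{h_L^m} \in \PSH(X,\theta),
\]
we have $\iota^*\tilde u_m = u_m$, so by the previous step $\iota^*\tilde u_m \leq \iota^*\varphi + C(\log m)/m$ on $Z$. Then $\tilde u_m - C(\log m)/m$ is an admissible competitor in the definition of $P_{X|Z}\varphi$, yielding the pointwise bound
\[
u_m(z) \leq P_{X|Z}\varphi(z) + C\,\frac{\log m}{m}
\]
for every $z\in Z\setminus\iota^{-1}(\mathbb{B}_+(L))$.

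The final step passes from this pointwise comparison to the claimed inequality of measures. By the representation formula (Theorem \ref{representation formula for equilibrium}), $\MA(P_{X|Z}\varphi) = \mathbf{1}_{\{P_{X|Z}\varphi = \iota^*\varphi\}}(\iota^*\theta + dd^c\iota^*\varphi)^p$. On any compact subset of $\{P_{X|Z}\varphi < \iota^*\varphi\}$ the pointwise bound forces $u_m \leq \iota^*\varphi - \delta + o(1)$, so $B_{X|Z}(m\varphi)$ decays exponentially in $m$ and the limit measure carries no mass there. In a neighborhood of the contact set, where $\iota^*\theta + dd^c\iota^*\varphi \geq 0$ by Lemma \ref{lemma for representation formula}(2), I would use a local Tian-type asymptotic for the Bergman kernel in the positive-curvature regime to furnish the quantitative bound $B_{X|Z}(m\varphi)/m^p \leq (\iota^*\theta + dd^c\iota^*\varphi)^p/p! + o(1)$; assembling the two regions gives the weak upper bound.

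The main obstacle I expect is precisely this last transition: the pointwise inequality $u_m\leq P_{X|Z}\varphi + o(1)$ alone only yields $B_{X|Z}/m^p = O(1)$, not the sharp quantitative match with $\MA(P_{X|Z}\varphi)$ near the contact set. The resolution requires combining the global envelope estimate with a delicate local analysis in the positive-curvature region where $\iota^*\varphi$ and $P_{X|Z}\varphi$ coincide.
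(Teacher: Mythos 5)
Your first two steps are sound and in fact reproduce the paper's ``Claim (2)'': the sub-mean-value bound $\sup_Z B_{X|Z}(m\varphi)\leq Cm^p$ together with the observation that $\frac1m\log\sum_i|\tilde s_{m,i}|^2_{h_L^m}$ is a global $\theta$-psh competitor in the envelope gives $B_{X|Z}(m\varphi)\leq Cm^p e^{-m(\iota^*\varphi-P_{X|Z}\varphi)}$, hence the vanishing of the limit measure on $\{P_{X|Z}\varphi<\iota^*\varphi\}$. (One caveat: to use $\tilde u_m-C\frac{\log m}{m}$ as a competitor you need $u_m\leq\iota^*\varphi+C\frac{\log m}{m}$ on \emph{all} of $Z$, not only on compacts of $Z\setminus\iota^{-1}(\mathbb{B}_+(L))$; this does hold because $Z$ is compact and the sub-mean-value estimate is uniform, but as written your bound does not cover $Z$.)

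The genuine gap is the one you flag yourself: the quantitative bound at contact points. A ``Tian-type asymptotic in the positive-curvature regime'' is not available here, for two reasons. First, Lemma \ref{lemma for representation formula}(2) gives $\iota^*\theta+dd^c\iota^*\varphi\geq0$ only \emph{at points of} the contact set $\{P_{X|Z}\varphi=\iota^*\varphi\}$, not in a neighborhood of it; the contact set is merely closed, the curvature may be negative arbitrarily close to it, so there is no open ``positive-curvature region'' and your two regions do not cover $Z$. Second, even at a contact point the curvature is only semipositive, whereas Tian asymptotics require strict positivity. What closes the gap in the paper is Berman's \emph{local holomorphic Morse inequality} (Claim (1) there): for every $z_0\in Z$, with no positivity hypothesis whatsoever,
\[
\limsup_{m\to\infty}\frac{B_{X|Z}(m\varphi)(z_0)}{m^p/p!}\,d\mu\ \leq\ \mathbf{1}_{\{\iota^*\theta+dd^c\iota^*\varphi\geq0\}}(z_0)\cdot(\iota^*\theta+dd^c\iota^*\varphi)^p .
\]
This is an elementary one-sided estimate: apply the sub-mean-value inequality to $|s_U|^2$ on the shrinking ball $|z|\leq\log m/\sqrt m$ and observe that the normalizing integral $\frac{1}{p!}\int_{|w|\leq\log m}e^{-\sum\lambda_i|w_i|^2}\,d\lambda(w)$ tends to $\pi^p/(p!\,\lambda_1\cdots\lambda_p)$ when all eigenvalues are positive and to $+\infty$ otherwise. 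Combined with your exponential decay off the contact set, the representation formula, and a reverse-Fatou argument (the densities $B_{X|Z}(m\varphi)/m^p$ are uniformly bounded), this yields the proposition; without it your argument only gives $B_{X|Z}(m\varphi)/m^p=O(1)$ near the contact set, which, as you correctly note, is not enough.
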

\begin{proof}
This is deduced from the two estimates 
about the upper bound of Bergman kernels. 
First, we show the so-called {\em ``Berman's local holomorphic Morse inequality''} 
(see \cite{Ber04}, Theorem 1.1) in the restricted case. 
The proof in the case $Z=X$ is applicable with no change. 

{\em Claim $(1)$}:
\begin{equation}\label{upper bound 1}
  \limsup_{m \to \infty} \frac{B_{X|Z}(m \varphi)}{m^p/p!}d \mu \leq 
  \mathbf{1}_{\{(\iota^* \theta + dd^c \iota^* \varphi ) \geq 0\}} 
  \cdot ( \iota^*\theta + dd^c \iota^* \varphi )^p.
\end{equation}

{\em Proof of the claim $(1)$.}
Fix any $z_0 \in Z$. If we take an appropriate trivialization patch $U$ around $z_0$ with 
$h_{L, U}(z_0)e^{-\varphi(z_0)}=1$ and denote the eigenvalues of 
$ \iota^*\theta + dd^c \iota^* \varphi $ with respect to the form 
$\frac{\sqrt{-1}}{2}\sum_{i=1}^{p} dz_i \wedge d\overline{z}_i $ 
at $z_0$ by $ \lambda_1,...,\lambda_p$, then 
for an arbitrary section $s \in H^0(X|Z, \mathcal{O}(mL))$ 
with $\norm{s}^2_{m \varphi} = 1$, we have
\begin{equation*}
\begin{split}
  & \frac{\abs{s(z_0)}^2_{m \varphi}}{m^p/p!} 
  = \frac{\abs{s_{U}(z_0)}^2}{m^p/p!}\\
  &\leq \Bigg( \int_{\abs{z}\leq\frac{\log{m}}{\sqrt{m}}}
               \abs{s_{U}}^2e^{-m\sum \lambda_i\abs{z_i}^2 }d\lambda(z) \Bigg) 
        \Bigg( \int_{\abs{z}\leq\frac{\log{m}}{\sqrt{m}}}
               e^{-m\sum \lambda_i\abs{z_i}^2} d\lambda(z) \cdot m^p/p! \Bigg)^{-1}
\end{split}
\end{equation*}
by the mean value inequality for subharmonic functions. 
Here $ d \lambda $ denotes the Lebesgue measure with respect to $z_i$. 
The $\limsup_{m \to \infty}$ of the numerator in the last side 
is bounded by $\det_{d \mu} d \lambda (z_0)$, 
and the denominator behaves as follows if we let $m \to \infty$:
\begin{equation*}
 \frac{1}{p!}\int_{\abs{w} \leq \log{m}} e^{- \sum \lambda_i \abs{w_i}^2} d \lambda(w) \to
  \begin{cases}
     \pi^p \big/ ({p! \lambda_1 \lambda_2 \cdots \lambda_p}) & \text{ if $\lambda_i \geq 0$}\\
     \infty  & \text{otherwise.}
  \end{cases}
\end{equation*}
(Here we use $w=\sqrt{m}z$ as a new variable.)
From this, one can deduce the claim. 

The second claim is a direct consequence of the definition of $P_{X|Z} \varphi$, 
and motivates the definition as well. 

{\em Claim $(2)$}: 
\begin{equation}\label{upper bound 2}
  \frac{B_{X|Z}(m \varphi)}{m^p/p!} \leq 
   e^{-m(\iota^* \varphi - P_{X|Z} \varphi)} \cdot \sup_Z \frac{B_{X|Z}(m \varphi)}{m^p/p!}.
\end{equation}
  
{\em Proof of the claim $(2)$.}
Note that the supremum in the right hand side is finite by claim $(1)$. 
Fix any $z_0 \in Z$ and take any $s \in H^0(X|Z, \mathcal{O}(mL))$ satisfying 
$\abs{s(z_0)}^2_{m \varphi}=B_{X|Z}(m \varphi)(z_0)$ and $\norm{s}^2_{m \varphi}=1$.
Since $\abs{s(z)}^2_{m \varphi} \leq \sup_Z B_{X|Z}(m \varphi) $ 
for any $z \in Z$, we have 
\begin{equation*}
 \frac{1}{m}\big( \log \abs{s(z)}^2_{h^m_L} - \log \sup_Z B_{X|Z}(m \varphi)\big) \leq 
  \iota^* \varphi \text{\ \ in $Z$}.
\end{equation*} 
Since the left hand side is the pull-back of a $\theta$-psh function on $X$  
the above inequality implies 
\begin{equation*}
 \frac{1}{m}\big( \log \abs{s(z)}^2_{h^m_L} - \log \sup_Z B_{X|Z}(m \varphi)\big) \leq 
  P_{X|Z} \varphi \text{\ \ in $Z$}.
\end{equation*} 
Thus the claim $(2)$ is obtained. 

Proposition \ref{upper bound 4} is now easily proved. 
Actually, claim $(1)$ and Lemma \ref{lemma for representation formula} $(2)$ imply  
\begin{equation*}
   \limsup_{m \to \infty} \frac{B_{X|Z}(m \varphi)}{m^p/p!} d \mu
   \leq ( \iota^*\theta + dd^c \iota^* \varphi )^p 
   \text{ \ \ in $\{ P_{X|Z} \varphi = \iota^* \varphi \}$}
\end{equation*}
and claim $(2)$ implies the pointwise convergence  
\begin{equation}\label{upper bound 3}
 \frac{B_{X|Z}(m \varphi)}{m^p/p!} \to 0 
  \text{ \ \ ($m \to \infty$) \ in $\{ P_{X|Z} \varphi \neq \iota^* \varphi \}$}
\end{equation}
so one can conclude Proposition \ref{upper bound 4} 
by Theorem \ref{representation formula for equilibrium}.
\end{proof}
\begin{cor}\label{half of Fujita}
  \begin{equation*}
    \Vol_{X|Z}(L) \leq \int_Z \MA(P_{X|Z}\varphi)
  \end{equation*}
\end{cor}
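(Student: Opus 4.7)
The plan is to combine the pointwise upper bound on restricted Bergman kernels from Proposition \ref{upper bound 4} with the tautological identity expressing the dimension of $H^0(X|Z,\mathcal{O}(mL))$ as an integral of $B_{X|Z}(m\varphi)$. First, by definition of the restricted Bergman kernel,
\[
\int_Z B_{X|Z}(m\varphi)\, d\mu \;=\; \dim H^0(X|Z,\mathcal{O}(mL)),
\]
so the definition of the restricted volume yields
\[
\Vol_{X|Z}(L) \;=\; \limsup_{m\to\infty}\int_Z \frac{B_{X|Z}(m\varphi)}{m^p/p!}\, d\mu.
\]
The corollary therefore reduces to interchanging the $\limsup$ with the integral, using Proposition \ref{upper bound 4}.

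To justify this interchange I would apply the reverse Fatou lemma. Theorem \ref{representation formula for equilibrium} asserts that $\MA(P_{X|Z}\varphi)$ has $L^\infty$-density with respect to $d\mu$, so Proposition \ref{upper bound 4} may be read at the level of densities as the a.e.\ pointwise inequality
\[
\limsup_{m\to\infty}\frac{B_{X|Z}(m\varphi)(z)}{m^p/p!} \;\leq\; \frac{d\MA(P_{X|Z}\varphi)}{d\mu}(z), \qquad z\in Z.
\]
To invoke reverse Fatou one needs a uniform integrable majorant of the left-hand integrand. Such a majorant is already implicit in claim $(1)$ of the proof of Proposition \ref{upper bound 4}: the mean-value inequality argument furnishes a bound of the form $B_{X|Z}(m\varphi)(z)/(m^p/p!) \leq g(z)+\varepsilon_m(z)$ for all sufficiently large $m$, where $g$ is a bounded function on $Z$ built from the smooth form $\iota^*\theta+dd^c\iota^*\varphi$ and $\varepsilon_m\to 0$ uniformly on the compact variety $Z$. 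Smoothness of $\varphi$ and $h_L$ together with compactness of $Z$ keep this dominating function integrable with respect to $d\mu$.

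Combining these ingredients, reverse Fatou gives
\[
\limsup_{m\to\infty}\int_Z \frac{B_{X|Z}(m\varphi)}{m^p/p!}\, d\mu \;\leq\; \int_Z \limsup_{m\to\infty}\frac{B_{X|Z}(m\varphi)}{m^p/p!}\, d\mu \;\leq\; \int_Z \MA(P_{X|Z}\varphi),
\]
and together with the first display this yields the corollary. The one point requiring care is the extraction of the uniform majorant from claim $(1)$; a slicker alternative would be to interpret Proposition \ref{upper bound 4} directly as a weak-$*$ inequality of positive Radon measures on the compact manifold $Z$ and simply test against the constant function $1$, but that route first requires pinning down the precise distributional meaning of the $\limsup$ of measures, whereas the reverse Fatou approach is entirely elementary once the dominating function is in hand.
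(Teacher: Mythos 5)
Your argument is correct and is essentially the paper's own proof: the paper likewise deduces the corollary from Proposition \ref{upper bound 4} together with the $L^\infty$-density of $\MA(P_{X|Z}\varphi)$ (Theorem \ref{representation formula for equilibrium}) by a Fatou-type interchange of $\limsup$ and integral, which in your write-up is the reverse Fatou lemma with the uniform majorant $\sup_Z B_{X|Z}(m\varphi)\leq Cm^p$ coming from the sub-mean-value estimate of claim $(1)$. You have merely made explicit the dominating function that the paper leaves implicit, so no further changes are needed.
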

\begin{proof}
Since $\MA(P_{X|Z}\varphi)$ has $L^{\infty}$-density 
by Theorem \ref{representation formula for equilibrium}, 
we can apply Fatou's lemma to (\ref{upper bound 4}). 
\end{proof}

We can now derive the fundamental relation
between $P_{X|Z} \varphi$ and $B_{X|Z}(m \varphi)$.
\begin{thm}\label{fundamental inequality for equilibrium}
For every compact set $K \Subset Z \setminus \iota^{-1}( \mathbb{B}_+(L))$, 
there exist an integer $m_0$ and a positive constant $C \geq 0$ such that the inequality
\begin{equation}\label{fundamental inequality}
C^{-1} \cdot e^{-m(\iota^*\varphi-P_{X|Z} \varphi)} \leq B_{X|Z}(m \varphi) 
\leq C \cdot m^p e^{-m(\iota^*\varphi-P_{X|Z}\varphi)}
\end{equation}
holds.
\end{thm}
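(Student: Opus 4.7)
The upper bound is essentially contained in the proof of Proposition \ref{upper bound 4}. Indeed, claim (2) there gives the pointwise majorization
\[
B_{X|Z}(m\varphi) \leq \bigl(\sup_Z B_{X|Z}(m\varphi)\bigr)\cdot e^{-m(\iota^*\varphi - P_{X|Z}\varphi)},
\]
so one only needs the uniform estimate $\sup_Z B_{X|Z}(m\varphi) \leq Cm^p$. This is a standard sub-mean-value estimate: in a local trivialization of $L$ near an arbitrary $z_0 \in Z$, absorb the linear part of the Taylor expansion of $\varphi$ at $z_0$ into the holomorphic frame, so that only the $O(1)$ quadratic remainder is left in the weight on a ball of radius $c/\sqrt{m}$; the mean value inequality for holomorphic functions then yields the bound.

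For the lower bound, fix $z_0\in K$. The plan is to construct a single section $s \in H^0(X|Z,\mathcal{O}(mL))$ with $|s(z_0)|^2_{m\varphi}/\|s\|^2_{m\varphi} \geq C^{-1}\,e^{-m(\iota^*\varphi - P_{X|Z}\varphi)(z_0)}$ by the following $L^2$-method, modelled on \cite{Ber09}. First, combining the definition of $P_{X|Z}\varphi$ with upper semi-continuity and the maximum trick with $P_X\varphi$, pick a minimal singular $\theta$-psh function $\psi$ on $X$ with $\iota^*\psi \leq \iota^*\varphi$ on $Z$ and $\iota^*\psi(z_0) \geq P_{X|Z}\varphi(z_0)-1/m$; by minimal singularity, $\psi$ is locally bounded in a neighbourhood of $K$ in $X$, since $K$ avoids $\iota^{-1}(\mathbb{B}_+(L))$. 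Second, in local coordinates and a trivialization of $L$ around $z_0$, construct a holomorphic peak section $s_{\mathrm{loc}}$ of $mL$ on a small ball $B_\rho\Subset X$ with $|s_{\mathrm{loc}}(z_0)|^2_{h_L^m}$ of order $e^{m\iota^*\psi(z_0)}$ and with $L^2$-mass on $B_\rho\cap Z$ controlled relative to the weight $h_L^m e^{-m\psi}$. Third, cut off $s_{\mathrm{loc}}$ to a smooth section of $mL|_Z$ and solve the $\bar\partial$-equation on $Z$ by H\"ormander's $L^2$-estimate against the weight $m\psi + \tau$, where $\tau$ is chosen with $\theta + dd^c\tau \geq \varepsilon\,\omega$ and $\tau$ bounded near $z_0$; such a $\tau$ exists by Kodaira's lemma precisely because $z_0 \notin \iota^{-1}(\mathbb{B}_+(L))$. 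This produces a global $s_Z \in H^0(Z, \mathcal{O}(mL))$ with $|s_Z(z_0)|^2_{h_L^m}$ still of order $e^{m\iota^*\psi(z_0)}$ and $\|s_Z\|^2_{m\varphi}\leq C$. Finally, apply Theorem \ref{L2 extension 2} to $s_Z$ with the same weight to obtain $\widetilde s \in H^0(X,\mathcal{O}(mL))$ with $\widetilde s|_Z = s_Z$, which then lies in $H^0(X|Z,\mathcal{O}(mL))$. Inserting $\iota^*\psi(z_0) \geq P_{X|Z}\varphi(z_0)-1/m$ into the resulting ratio gives the claimed lower bound.

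The principal obstacle is that neither the $\bar\partial$-correction on $Z$ nor the $Z$-to-$X$ extension can be run directly against the weight $m\varphi$, because $\theta + dd^c\varphi$ is not assumed semi-positive. Replacing $\varphi$ by $\psi$ fixes the sign of the curvature but not its strictness; the additional strict positivity required by H\"ormander's theorem and by Theorem \ref{L2 extension 2} is supplied by Kodaira's lemma, whose singular set can be kept off $z_0$ precisely because $z_0\notin\iota^{-1}(\mathbb{B}_+(L))$. Ensuring that all constants---the radius $\rho$ of the local ball, the $L^2$-bounds in the $\bar\partial$-step, and the extension constant $C$ of Theorem \ref{L2 extension 2}---are uniform in $m$ and in $z_0\in K$ is the main technical content of the argument.
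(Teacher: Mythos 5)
Your upper-bound argument matches the paper's: the paper deduces it directly from Claim (2) in the proof of Proposition \ref{upper bound 4} together with the finiteness of $\sup_Z B_{X|Z}(m\varphi)/m^p$ coming from the sub-mean-value estimate, exactly as you describe. For the lower bound your overall mechanism is also the paper's --- reduce to producing, for each $z\in K$ and $m\geq m_0$, a section of $H^0(X|Z,\mathcal{O}(mL))$ peaking at $z$ with $|s(z)|^2\gtrsim e^{mP_{X|Z}\varphi(z)}$ and $\norm{s}^2_{m\varphi}=O(1)$, supply the missing strict positivity by Kodaira's lemma (a $\theta$-psh $\psi_0$ with poles on $\Supp E$, smooth near $K$), and pass from $Z$ to $X$ with Theorem \ref{L2 extension 2} --- but you differ in one step: the paper applies Theorem \ref{L2 extension 2} \emph{twice}, first to extend from the point $\{z\}$ to $Z$ with respect to the weight $\psi_{m,k}=(m-m_0)\psi_k+m_0\psi_0$, then from $Z$ to $X$, whereas you build the section on $Z$ by a cutoff plus H\"{o}rmander's estimate. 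Both work; the paper's version saves it the separate $\bar\partial$-machinery on $Z$ at the cost of nothing, since Theorem \ref{L2 extension 2} is already available.

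There is one point you should repair. You select, for each fixed $z_0$, a competitor $\psi$ with $\iota^*\psi(z_0)\geq P_{X|Z}\varphi(z_0)-1/m$. But $P_{X|Z}\varphi$ is defined as the \emph{regularized} upper envelope $\soup$, which can strictly exceed the pointwise supremum $\sup_\psi \iota^*\psi(z_0)$ on a negligible set; at such a point no admissible $\psi$ comes within $1/m$ of $P_{X|Z}\varphi(z_0)$, and your selection fails. The paper sidesteps this by fixing the Choquet sequence $\psi_k$ with $\iota^*\psi_k\nearrow P_{X|Z}\varphi$ a.e., proving the uniform bound
\begin{equation*}
B_{X|Z}(m\varphi)\geq C^{-2}e^{-m(\iota^*\varphi-\iota^*\psi_k)}\quad\text{on }K\text{ for every }k,
\end{equation*}
and letting $k\to\infty$: this yields (\ref{fundamental inequality}) a.e.\ on $K$, hence everywhere by the continuity of $P_{X|Z}\varphi$ on $Z\setminus\iota^{-1}(\mathbb{B}_+(L))$ (Theorem \ref{regularity}) and of $B_{X|Z}(m\varphi)$. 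Your argument goes through verbatim once you run it against each $\psi_k$ (or against $\max\{\psi_k,P_X\varphi\}$, to keep local boundedness near $K$) and take the limit in $k$ at the end, rather than trying to pick a single near-extremal $\psi$ at each point. A second, minor bookkeeping point: the weight $m\psi+\tau$ is a weight for $(m+1)L$ rather than $mL$; you should split $mL=(m-m_0)L+m_0L$ and put the strictly positive Kodaira weight on the $m_0L$ factor, taking $m_0$ large enough that its curvature dominates the constant $N\omega$ required by Theorem \ref{L2 extension 2}.
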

\begin{proof}
The right hand side inequality is a direct consequence of Proposition \ref{upper bound 4} 
and Theorem \ref{representation formula for equilibrium}. 
We will show the left hand side. 
By the extremal property of the Bergman kernel, it is enough to show the following claim.  

{\em Claim}:
There exist some $m_0$ , $C$ and section
$s_m \in H^0(X|Z, \mathcal{O}(mL))$ 
for each $m \geq m_0$ such that
\begin{itemize}
 \setlength{\itemsep}{0pt}
  \item[$(1)$]
    $ \abs{s_m(z)}^2_{m\psi_k} \geq 
      C^{-1}  \ \ for \ any \ z \in K, \ k \in \mathbb{N} $,  
  \item[$(2)$]
    $ \norm{s_m}^2_{m \varphi} \leq C $. 
\end{itemize} 
Here $\psi_k \in \PSH(X, \theta)$ are taken to satisfy 
$\iota^*\psi_k \nearrow P_{X|Z} \varphi \ $ a.e.\ with respect to $d \mu$. 
Actually, this implies
\begin{equation*}
 B_{X|Z}(m \varphi) 
      \geq \frac{\abs{s_m(z)}^2_{m \varphi}}{\norm{s_m}^2_{m \varphi}} 
      \geq C^{-2}e^{-m(\iota^*\varphi - \iota^*\psi_k)}
\end{equation*}
so letting $k \to \infty$ we get the inequality. 

{\em Proof of the claim.}
Fix $z \in K$. 
By Kodaira's lemma, we may take some ample $\mathbb{Q}$-divisor $A$ 
and some effective $\mathbb{Q}$-divisor $E$ on $X$ 
satisfying $L=A+E$. 
From this decomposition, we may construct a $\theta$-psh function $\psi_0$ 
with $\psi_0^{-1}(-\infty) \subseteq \Supp E$, $\psi_0 \leq \varphi$.  
Then using Theorem \ref{L2 extension 2} twice, 
we may find suitable $m_0, C$ and sections $s_m \in H^0(X|Z, \mathcal{O}(mL))$ 
for each $m \geq m_0$ such that
\begin{itemize}
 \setlength{\itemsep}{0pt}
  \item[$(1)$]
    $\abs{s_m(z)}^2_{\psi_{m,k}} = 1$
  \item[$(2)$]
    $\norm{s_m}^2_{\psi_{m,k}} \leq C$ , 
\end{itemize}
where $\psi_{m,k} =(m-m_0)\psi_k + m_0 \psi_0 $. 
Then we infer 
\begin{equation*}
 \norm{s_m}^2_{m \varphi} \leq \norm{s_m} ^2_{\psi_{m,k}} \leq C
\end{equation*}
and since we may assume $e^{m_0(\varphi-\psi_0)(z)} \leq C$ 
by the smoothness of $\psi_0$ around $z$, 
\begin{equation*}
 1 = \abs{s_m(z)}^2_ {\psi_{m,k}} \leq C\abs{s_m(z)}^2_{m \varphi_k} .
\end{equation*}
Here $C$ depends on $m_0$ and $K$. 
\end{proof}

As a consequence of the above results, 
the sequence of the Monge-Amp\`{e}re mass 
of the following Fubini-Study like potential functions 
converges to the Monge-Amp\`{e}re mass of the restricted equilibrium weight.
This fact corresponds to the description of restricted volumes  
via moving intersection numbers 
(see Theorem \ref{moving intersection number description}), 
and has a key role for us to prove the local version of the restricted Fujita approximation 
in the next subsection.
Let us define: 
 \begin{equation}\label{definition of FS}
  u_m:= \iota^*\varphi + \frac{1}{m} \log B_{X|Z}(m \varphi).
 \end{equation}
\begin{thm}\label{convergence of FS}
\begin{equation*}
   u_m \to P_{X|Z} \varphi  
   \ \ \ \ \text{  uniformly in any compact subset of \ $Z \setminus \iota^{-1}(\mathbb{B}_+(L))$}, 
\end{equation*}
and
\begin{equation*}
   \MA(u_m) \to \MA(P_{X|Z} \varphi)
   \ \ \ (m \to \infty)
\end{equation*}
in the sense of currents.
\end{thm}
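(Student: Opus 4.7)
The plan is to derive the uniform convergence statement directly from the two-sided estimate of Theorem~\ref{fundamental inequality for equilibrium}, and then upgrade it to convergence of Monge--Amp\`ere currents by combining Bedford--Taylor continuity on the open set $Z\setminus\iota^{-1}(\mathbb{B}_+(L))$ with a total-mass bound coming from the comparison theorem. For the uniform convergence, fix a compact set $K\Subset Z\setminus\iota^{-1}(\mathbb{B}_+(L))$. On $K$, Theorem~\ref{fundamental inequality for equilibrium} produces constants $m_0$ and $C>0$ so that for $m\geq m_0$,
\begin{equation*}
C^{-1} e^{-m(\iota^*\varphi-P_{X|Z}\varphi)} \leq B_{X|Z}(m\varphi) \leq C m^p e^{-m(\iota^*\varphi-P_{X|Z}\varphi)};
\end{equation*}
taking logarithms, dividing by $m$, and adding $\iota^*\varphi$ yields
\begin{equation*}
-\frac{\log C}{m} \leq u_m - P_{X|Z}\varphi \leq \frac{\log C + p\log m}{m}\quad\text{on }K,
\end{equation*}
which proves the first assertion.

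Next, $u_m$ is $\iota^*\theta$-plurisubharmonic on $Z$ with algebraic singularities (the common zero set of an orthonormal basis of $H^0(X|Z,\mathcal{O}(mL))$), and by the step above it is locally uniformly bounded on $Z\setminus\iota^{-1}(\mathbb{B}_+(L))$, where $P_{X|Z}\varphi\in C^{1,1}$ by Theorem~\ref{regularity}. Applying Bedford--Taylor continuity (Proposition~\ref{continuity properties of BT}(3)) on this open set therefore gives
\begin{equation*}
(\iota^*\theta+dd^c u_m)^p \longrightarrow (\iota^*\theta+dd^c P_{X|Z}\varphi)^p
\end{equation*}
weakly as currents on $Z\setminus\iota^{-1}(\mathbb{B}_+(L))$.

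To upgrade this to weak convergence on all of $Z$, I control total masses. Claim~(2) in the proof of Proposition~\ref{upper bound 4}, combined with the uniform bound $\sup_Z B_{X|Z}(m\varphi)\leq C m^p$ that emerges from the local holomorphic Morse inequality argument of Claim~(1), yields the global estimate $u_m\leq P_{X|Z}\varphi+\epsilon_m$ with $\epsilon_m=O(\log m/m)\to 0$. Hence $P_{X|Z}\varphi+\epsilon_m$ is less singular than $u_m$, and Theorem~\ref{comparison theorem} gives
\begin{equation*}
\int_Z\MA(u_m)\leq\int_Z\MA(P_{X|Z}\varphi+\epsilon_m)=\int_Z\MA(P_{X|Z}\varphi).
\end{equation*}
Since $\MA(P_{X|Z}\varphi)$ puts no mass on $\iota^{-1}(\mathbb{B}_+(L))$ by the zero-extension definition, a standard tightness argument on the compact space $Z$ then concludes the proof: any vague subsequential limit $\nu$ of $\MA(u_m)$ agrees with $\MA(P_{X|Z}\varphi)$ on $Z\setminus\iota^{-1}(\mathbb{B}_+(L))$ by the previous paragraph and satisfies $\nu(Z)\leq\int_Z\MA(P_{X|Z}\varphi)$, forcing $\nu=\MA(P_{X|Z}\varphi)$. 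I expect the main obstacle to be precisely this last step, namely ruling out that Monge--Amp\`ere mass leaks into the augmented base locus in the limit; the comparison-theorem bound on total masses is exactly what rules this out.
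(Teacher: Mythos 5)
Your argument is correct and follows essentially the same route as the paper: the two-sided estimate of Theorem~\ref{fundamental inequality for equilibrium} gives local uniform convergence $u_m\to P_{X|Z}\varphi$ off $\iota^{-1}(\mathbb{B}_+(L))$, Bedford--Taylor continuity gives weak convergence of the Monge--Amp\`ere measures there, and the comparison theorem bounds the total masses so no mass can leak into the augmented base locus. The only (harmless) difference is cosmetic: you invoke a uniform bound $\sup_Z B_{X|Z}(m\varphi)\leq Cm^p$ to get $\epsilon_m\to 0$, whereas for the mass inequality it suffices that $\sup_Z B_{X|Z}(m\varphi)$ is finite for each $m$, since adding any constant to $P_{X|Z}\varphi$ leaves $\int_Z\MA(P_{X|Z}\varphi)$ unchanged; the paper instead cites Theorems~\ref{comparison theorem} and~\ref{comparison for equilibrium} at this step.
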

\begin{proof}
The inequality (\ref{fundamental inequality}) is equivalent to 
\begin{equation*}
 - \frac{\log C}{m} + P_{X|Z} \varphi 
 \leq \iota^* \varphi + \frac{1}{m}\log B_{X|Z}(m \varphi) 
 \leq \frac{\log C+p\log m}{m} + P_{X|Z} \varphi.
\end{equation*}
This estimate implies that, on any compact subset of $Z \setminus \iota^{-1}(\mathbb{B}_+(L))$, 
$u_m$ converges uniformly to $P_{X|Z} \varphi$.
By the continuity property of the Monge-Amp\`{e}re operator, we deduce
\begin{equation*}
  (\iota^* \theta + dd^c u_m )^p \to (\iota^* \theta + dd^c P_{X|Z} \varphi)^p  
  \text{ \ \ in $Z \setminus \iota^{-1}(\mathbb{B}_+(L))$}.
\end{equation*}
In particular, 
\begin{equation*}
  \liminf_{m \to \infty} 
        \int_{Z \setminus \iota^{-1}(\mathbb{B}_+(L))}(\iota^* \theta + dd^c u_m)^p 
  \geq  \int_{Z \setminus \iota^{-1}(\mathbb{B}_+(L))}(\iota^* \theta + dd^c P_{X|Z} \varphi )^p
\end{equation*}
holds. Therefore we only have to show  
\begin{equation*}
  \limsup_{m \to \infty} 
        \int_{Z \setminus \iota^{-1}(\mathbb{B}_+(L))}(\iota^* \theta + dd^c u_m)^p 
  \leq  \int_{Z \setminus \iota^{-1}(\mathbb{B}_+(L))} (\iota^* \theta + dd^c P_{X|Z} \varphi )^p, 
\end{equation*}
because we already have the current convergence  
in $Z \setminus \iota^{-1}(\mathbb{B}_+(L))$, 
but this is directly seen by Theorem \ref{comparison theorem} 
and Theorem \ref{comparison for equilibrium}.
\end{proof}

\subsection{Restricted Fujita-type approximation}
In this subsection, we first give a proof of the restricted Fujita approximation theorem
and then finish the poof of Theorem \ref{Main Theorem}.  
\begin{thm}[\cite{Tak06}, Theorem 3.1, \cite{ELMNP09}, Theorem 2.13]\label{Fujita approximation}
Let $X$ be a smooth projective variety, $\iota:Z \hookrightarrow X$ a subvariety, 
and $L$ a big line bundle on $X$. 
Then for an arbitrary $\varepsilon > 0$, the following diagram is commutative, 
where $\pi_Z$, $\pi_X$ are modifications 
and $ \widetilde{Z}, \widetilde{X}$ are smooth such that
\begin{itemize}
 \setlength{\itemsep}{0pt}
  \item[$(1)$]
    in the sense of linear equivalence between $\mathbb{Q}$-divisors, 
    $\pi_X^* L =A+E$ holds for some semiample and big divisor $A$ and effective divisor $E$, 
    and
  \item[$(2)$]
    $\Vol_{\widetilde{X}|\widetilde{Z}}(A) \leq \Vol_{X|Z}(L) \leq \Vol_{\widetilde{X}|\widetilde{Z}}(A) + \varepsilon$  
\end{itemize} 
hold. 
\[\xymatrix{
    {\widetilde{Z}}\ar[r]^{\widetilde{\iota}}\ar[d]_{\pi_Z} 
  & {\widetilde{X}}\ar[d]^{\pi_X}\\ 
    {Z}\ar[r]_{\iota}
  &  X
}
\]
\end{thm}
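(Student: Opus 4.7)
The strategy is to take, for each large $m$, a log-resolution of the base ideal $\mathfrak{b}(|mL|)$ and to identify the restricted volume of the moving part with the Monge-Amp\`ere mass of the Bergman-kernel potential $u_m$ from (\ref{definition of FS}), then to let $m\to\infty$ using Theorem \ref{convergence of FS}. Concretely, I would let $\pi_X:\widetilde{X}_m\to X$ be a log-resolution of $\mathfrak{b}(|mL|)$, arranged by further blow-ups so that the strict transform $\widetilde{Z}_m$ of $Z$ is smooth and the induced $\pi_Z:\widetilde{Z}_m\to Z$ is birational; this is possible because the assumption $Z\nsubseteq\mathbb{B}_+(L)$ implies $\mathfrak{b}(|mL|)$ does not cut out all of $Z$ once $m$ is large enough. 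Writing $\pi_X^*(mL)=mA+mE$ with $mA$ base-point-free gives $\pi_X^*L=A+E$ as $\mathbb{Q}$-divisors with $A$ semiample, big (for large $m$), and $E$ effective, establishing~(1). The upper bound $\Vol_{\widetilde{X}_m|\widetilde{Z}_m}(A)\leq \Vol_{X|Z}(L)$ is elementary: tensoring with the canonical section of $\ell mE$ injects $H^0(\widetilde{X}_m,\ell mA)\hookrightarrow H^0(X,\ell mL)$, and this descends to the restricted quotients because $\pi_Z$ is birational and $\widetilde{Z}_m\nsubseteq\Supp(E)$.

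The reverse inequality is the heart of the argument. Since $mA$ is base-point-free and $\widetilde{Z}_m\nsubseteq\Supp(E)$, the restricted linear series on $\widetilde{Z}_m$ has no base points, so the asymptotic Riemann-Roch for semiample classes yields $\Vol_{\widetilde{X}_m|\widetilde{Z}_m}(A)=(A^p\cdot\widetilde{Z}_m)$. I would then identify this intersection number with $\int_Z\MA(u_m)$. Under the isomorphism $H^0(X,mL)\simeq H^0(\widetilde{X}_m,mA)$ given by division by the canonical section $\mathbf{1}_{mE}$, an orthonormal basis $\{s_{m,i}\}$ of $H^0(X|Z,mL)$ corresponds to a basis $\{\sigma_i\}$ of $H^0(\widetilde{X}_m|\widetilde{Z}_m,mA)$ satisfying
\[
\pi_Z^*B_{X|Z}(m\varphi)=\bigl|\widetilde{\iota}^*\mathbf{1}_{mE}\bigr|^2_{h_{mE}}\cdot\sum_i\bigl|\sigma_i\bigr|^2_{\widetilde{h}_m}
\]
for a suitable smooth metric $\widetilde{h}_m$ on $mA$. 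Consequently $\pi_Z^*u_m=v_m+\tfrac{1}{m}\log|\widetilde{\iota}^*\mathbf{1}_{mE}|^2_{h_{mE}}$ with $v_m$ smooth on $\widetilde{Z}_m$ (by base-point-freeness of the restricted system), and the Lelong-Poincar\'e formula gives
\[
\int_Z\MA(u_m)=\int_{\widetilde{Z}_m}\bigl(\widetilde{\iota}^*\theta_A+dd^cv_m\bigr)^p=(A^p\cdot\widetilde{Z}_m),
\]
where $\theta_A$ is a smooth curvature representative of $c_1(A)$.

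With the identification $\Vol_{\widetilde{X}_m|\widetilde{Z}_m}(A)=\int_Z\MA(u_m)$ in hand, I pass to the limit: Theorem \ref{convergence of FS} gives $\int_Z\MA(u_m)\to\int_Z\MA(P_{X|Z}\varphi)$, while Corollary \ref{half of Fujita} gives $\int_Z\MA(P_{X|Z}\varphi)\geq \Vol_{X|Z}(L)$. Combined with the upper bound of the first paragraph, this forces $\lim_m\Vol_{\widetilde{X}_m|\widetilde{Z}_m}(A)=\Vol_{X|Z}(L)$, so choosing $m$ large enough yields~(2). The main obstacle I expect is the middle step: carefully tracking the divisorial singularities of $u_m$ along the preimage of $\iota^{-1}(\mathbb{B}_+(L))$ under $\pi_Z$, and verifying that the non-pluripolar Monge-Amp\`ere mass of $u_m$ on $Z$ indeed coincides with the smooth Monge-Amp\`ere integral of the desingularization $v_m$ on $\widetilde{Z}_m$; here the Lelong-Poincar\'e formula and the smoothness of $v_m$, which in turn rests on base-point-freeness of $mA$, enter crucially.
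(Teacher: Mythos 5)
Your route is genuinely different from the paper's. The paper does not take the moving part of $|mL|$: it builds the potential $u_m$ from the \emph{full} space $H^0(X,\mathcal{O}(mL-G))$, where $G$ is the auxiliary bundle of Siu's uniform global generation (Proposition \ref{UGG}), takes a log resolution of the multiplier ideal $\mathcal{I}(mu_m)$ rather than of the base ideal, and obtains the semiample part $A'$ from global generation of $\mathcal{O}(mL)\otimes\mathcal{I}(mu_m)$. The crucial inequality $\Vol_{\widetilde{X}|\widetilde{Z}}(A)\geq\Vol_{X|Z}(L)-\varepsilon$ is then proved purely algebraically: subadditivity and integral closedness of multiplier ideals give $H^0(\widetilde{X}|\widetilde{Z},\mathcal{O}(\ell A'))\supseteq H^0(X|Z,\mathcal{O}(\ell(mL-G)))$, and Lemma \ref{lemma for Fujita approximation} does the counting; no smoothness of $Z$, no hypothesis $Z\nsubseteq\mathbb{B}_+(L)$, and no facts about restricted volumes of nef classes are used. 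You instead prove the hard inequality analytically through $(A^p\cdot\widetilde{Z}_m)=\int_Z\MA(u_m)$, Theorem \ref{convergence of FS} and Corollary \ref{half of Fujita}; this is essentially the chain the paper runs later, in its proof of Theorem \ref{Main Theorem}, so your proposal merges the two steps. What you gain is an exact identity $\int_Z\MA(u_m)=(A^p\cdot\widetilde{Z}_m)$ (that part of your sketch is sound: the pulled-back restricted sections generate $\mathcal{O}(-mE)\cdot\mathcal{O}_{\widetilde{Z}_m}$, so $v_m$ is smooth and the non-pluripolar product discards $[E]$), with no need for the comparison $E'\leq F'$; what you lose is generality and the algebraic self-containedness the paper is after.

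The step that needs repair is $\Vol_{\widetilde{X}_m|\widetilde{Z}_m}(A)=(A^p\cdot\widetilde{Z}_m)$, and specifically the direction $\geq$, which is the one your argument uses. Base-point-freeness of the restricted series does not give it: the restricted volume measures the images of the restriction maps $H^0(\widetilde{X}_m,\mathcal{O}(\ell mA))\to H^0(\widetilde{Z}_m,\mathcal{O}(\ell mA))$, and for a free bundle these sections all come from the image $Y$ of the morphism $f$ defined by $|mA|$, so the images grow like the degree of $f(\widetilde{Z}_m)$, which falls short of $(A^p\cdot\widetilde{Z}_m)$ by the degree of $f|_{\widetilde{Z}_m}$. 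To get equality one needs $A$ big (so $f$ is birational), $\widetilde{Z}_m\nsubseteq\mathbb{B}_+(A)$ (so $f$ is generically injective along $\widetilde{Z}_m$), and Serre vanishing on $Y$ for asymptotic surjectivity of restrictions — in other words Corollary 2.17 of \cite{ELMNP09}, or a proof of it; none of this is in your sketch, and when the paper uses the analogous identity in the proof of Theorem \ref{Main Theorem} it explicitly arranges $\widetilde{\iota}(\widetilde{Z})\nsubseteq\mathbb{B}_+(A')$ and invokes both semiampleness and bigness. (Bigness of $A$ itself is fine: run your estimate for $\iota=\mathrm{id}$ to get $(A^n)\geq\Vol(L)-\varepsilon>0$ with $A$ nef, as the paper remarks.) Finally, since you rely on Theorem \ref{convergence of FS} and Corollary \ref{half of Fujita}, your argument only proves the statement for $Z$ smooth with $Z\nsubseteq\mathbb{B}_+(L)$, whereas the theorem is stated, and the paper's multiplier-ideal proof works, for an arbitrary subvariety.
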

\begin{rem} 
 By the continuity of the restricted volume 
 (see Theorem A in \cite{ELMNP09} or (\ref{restricted volume can be written by MA})), 
 the divisor $A$ in Theorem \ref{Fujita approximation} can be taken ample. 
  This is shown as follows. 
  First we get a decomposition of $\mathbb{Q}$-divisor: 
  $A=A_0+E_0=((1-\delta)A + \delta A_0 )  + (\delta E_0)$ by Kodaira's lemma. 
  Then $A_{\delta}:= (1-\delta)A + \delta A_0$ is ample and letting $\delta \to 0$, 
  $\Vol_{X|Z}(A_{\delta})$ approximates $\Vol_{X|Z}(A)$. 
  From this, it follows that 
  \begin{equation}\label{limsup is lim}
    \Vol_{X|Z}(L) 
     = \lim_{m \to \infty} \frac{\dim H^0(X|Z, \mathcal{O}({mL}))}{m^p/p!}
  \end{equation}
 holds. 
 Indeed one can reduce this to the case when $L$ is ample. 
 Since the Serre vanishing theorem forces 
 $H^0(X|Z, \mathcal{O}(mL)) = H^0(Z, \mathcal{O}(mL))$ in this case, 
 we may assume $Z=X$. 
 Then (\ref{limsup is lim}) is obtained from the Riemann-Roch theorem. 
 $\hfill \Box$
\end{rem}
Although a proof of Theorem \ref{Fujita approximation} 
is already obtained in \cite{Tak06} or \cite{ELMNP09}, 
we have to reprove this to show the local version 
(Theorem \ref{Main Theorem}) at the same time.
Our proof of Theorem \ref{Fujita approximation} is essentially the same 
as the proof in \cite{Tak06} or \cite{ELMNP09}, 
but we need a more direct proof and do not use a characterization of restricted volumes
via multiplier ideal sheaves. 
We need the following 
{\em ``The uniformly globally generation theorem''}, 
which was first proved in \cite{Siu98}. 
It can also be obtained as a corollary of Theorem \ref{L2 extension 2}.  
\begin{prop}[\cite{Siu98}, Proposition 1]\label{UGG}
 Given a smooth projective variety $X$, there exists a line bundle $G$ 
 such that for any pseudo-effective line bundle $F$ on $X$ 
 with a singular Hermitian metric $h_Fe^{-\psi}$  
 whose Chern curvature current is positive, 
 the sheaf $ \mathcal{O}(F+G) \otimes \mathcal{I}(\psi) $
 is globally generated. 
\end{prop}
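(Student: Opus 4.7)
I will deduce the statement from Nadel's vanishing theorem (whose $L^2$-analytic heart is the same Hörmander $\bar{\partial}$-machinery underlying Theorem \ref{L2 extension 2}) combined with Castelnuovo--Mumford regularity. Fix a very ample line bundle $A$ on $X$ equipped with a smooth Hermitian metric $h_A$ whose Chern curvature form $\omega := \theta_A$ is Kähler, set $n := \dim X$, and define
\[
  G := K_X + (n+1)\, A.
\]
I claim this single choice of $G$ suffices, uniformly in the pseudo-effective data $(F, h_F e^{-\psi})$.

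Given such data and any integer $i$ with $1 \leq i \leq n$, I rewrite
\[
  \mathcal{O}(F + G - iA)\otimes\mathcal{I}(\psi) \;=\; \mathcal{O}\bigl(K_X + F + (n+1-i)A\bigr)\otimes\mathcal{I}(\psi).
\]
The line bundle $F + (n+1-i)A$ carries the singular metric $h_F\, h_A^{\,n+1-i}\, e^{-\psi}$, whose curvature current equals $(\theta_F + dd^c\psi) + (n+1-i)\omega \geq \omega$, and whose multiplier ideal is still $\mathcal{I}(\psi)$ because $h_F$ and $h_A$ are smooth. Nadel vanishing therefore yields
\[
  H^i\bigl(X,\; \mathcal{O}(F + G - iA) \otimes \mathcal{I}(\psi)\bigr) \;=\; 0 \qquad (1 \leq i \leq n),
\]
while $H^i \equiv 0$ for $i > n$ is automatic. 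This exhibits $\mathcal{F} := \mathcal{O}(F+G) \otimes \mathcal{I}(\psi)$ as a $0$-regular coherent sheaf with respect to $A$ in the Castelnuovo--Mumford sense, and hence globally generated by Mumford's classical argument.

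The only nontrivial input, and the place I expect the main technical obstacle, is Nadel vanishing for a weight $\psi$ that a priori is only quasi-plurisubharmonic. The standard Demailly regularization approximates $\psi$ from above by quasi-psh weights $\psi_\nu$ with analytic singularities while preserving the strict curvature lower bound up to an arbitrarily small loss; Nadel vanishing for the $\psi_\nu$ then propagates to $\psi$ via the coherence of multiplier ideals and the stabilization of the increasing filtration $\mathcal{I}(\psi_\nu) \subset \mathcal{I}(\psi_{\nu+1}) \subset \cdots$. The underlying Hörmander $\bar{\partial}$-estimates are precisely those that also power Theorem \ref{L2 extension 2}, which is what justifies advertising the proposition as a corollary of that extension theorem.
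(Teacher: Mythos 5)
Your argument is correct, but it is a genuinely different route from the one the paper has in mind. The paper gives no proof at all: it quotes \cite{Siu98}, Proposition 1, and remarks that the statement "can also be obtained as a corollary of Theorem \ref{L2 extension 2}", i.e.\ the intended mechanism is the Ohsawa--Takegoshi-type extension theorem of section 5 applied with $Z$ a point, so that for $G$ sufficiently positive one extends germs at any given $x\in X$ to global sections with an $L^2$ bound against $e^{-\psi}$; the $L^2$ finiteness is exactly what places the extension in $H^0(X,\mathcal{O}(F+G)\otimes\mathcal{I}(\psi))$, and the uniformity of the constants in Theorem \ref{L2 extension 2} is what makes $G$ independent of $(F,\psi)$. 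You instead take the explicit bundle $G=K_X+(n+1)A$ and combine Nadel vanishing with Castelnuovo--Mumford regularity: the vanishings $H^i(X,\mathcal{O}(F+G-iA)\otimes\mathcal{I}(\psi))=0$ for $1\leq i\leq n$ do follow from Nadel's theorem as you say (the twisted metric $h_Fh_A^{n+1-i}e^{-\psi}$ has curvature current $\geq\omega$ and multiplier ideal $\mathcal{I}(\psi)$), $\mathcal{I}(\psi)$ is coherent, and Mumford's theorem then gives global generation of the $0$-regular sheaf. This is the standard "algebraic" proof of uniform global generation; it buys an effective, explicit $G$ and avoids the extension machinery, whereas the paper's route keeps everything inside the $L^2$-extension framework it has already built (which is why the proposition is advertised there as a corollary of Theorem \ref{L2 extension 2}) and does not presuppose Nadel vanishing.

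One caveat on your last paragraph: Nadel vanishing for an arbitrary singular positively curved weight is a standard theorem that you can simply quote (Nadel, Demailly), and it is cleaner to do so than to re-derive it by regularization. As written, your sketch is slightly off: if $\psi_\nu\searrow\psi$ then $\mathcal{I}(\psi_{\nu+1})\subseteq\mathcal{I}(\psi_\nu)$, so the family is decreasing, not increasing, and passing the vanishing to the limit ideal is not a formal stabilization argument --- it touches on equisingular approximation (or, in other formulations, the openness property of multiplier ideals). None of this affects the correctness of your main argument, since the general Nadel theorem is available off the shelf; just cite it rather than sketching its proof.
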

We also need the following lemma which can be shown by simple algebraic computations. 
For a proof, see e.g. 2.2.C of \cite{Laz04}. 
\begin{lem}\label{lemma for Fujita approximation}
For an arbitrary line bundle $G$ on $X$ and a positive number $ \varepsilon > 0 $, there exist a subsequence
$\{\ell_k\}(k=1,2,...)$ and an integer $m_0$ such that
 \begin{equation*}
  \frac{\dim H^0(X|Z,\mathcal{O}(\ell_k(mL-G)))}{{\ell_k}^p/p!} 
  \geq m^p\big(\Vol_{X|Z}(L)-\varepsilon\big)
 \end{equation*}
for any $m \geq m_0$.
\end{lem}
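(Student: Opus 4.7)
The plan is to combine the homogeneity of the restricted volume with the continuity already invoked in the preceding Remark. Since the $\limsup$ in the definition of $\Vol_{X|Z}$ is in fact a limit (by the argument leading to equation (\ref{limsup is lim})), we have $\Vol_{X|Z}(kD) = k^p \Vol_{X|Z}(D)$ for every positive integer $k$ and every line bundle $D$. Extending $\Vol_{X|Z}$ to $\mathbb{Q}$-line bundle classes via this scaling and using the continuity of the restricted volume cited from \cite{ELMNP09},
\begin{equation*}
 \frac{1}{m^p} \Vol_{X|Z}(mL - G) = \Vol_{X|Z}(L - G/m) \longrightarrow \Vol_{X|Z}(L) \quad (m \to \infty).
\end{equation*}
Hence there exists $m_0 = m_0(\varepsilon)$ such that $\Vol_{X|Z}(mL - G) \geq m^p \bigl(\Vol_{X|Z}(L) - \tfrac{\varepsilon}{2}\bigr)$ for every $m \geq m_0$.

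Next, fix any $m \geq m_0$. Since $L$ is big and $Z \not\subset \mathbb{B}_+(L)$, the line bundle $mL - G$ is big with $Z \not\subset \mathbb{B}_+(mL - G)$ for $m$ sufficiently large. Applying the limit formula (\ref{limsup is lim}) (whose proof works verbatim with $L$ replaced by $mL - G$) gives
\begin{equation*}
 \lim_{\ell \to \infty} \frac{\dim H^0(X|Z, \mathcal{O}(\ell(mL - G)))}{\ell^p/p!} = \Vol_{X|Z}(mL - G) \geq m^p\bigl(\Vol_{X|Z}(L) - \tfrac{\varepsilon}{2}\bigr),
\end{equation*}
so for every $\ell$ exceeding a threshold $\ell_0(m)$ the quotient on the left is at least $m^p(\Vol_{X|Z}(L) - \varepsilon)$. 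A diagonal extraction then produces a single subsequence $\{\ell_k\}$: inductively choose $\ell_k$ to exceed the finite maximum $\max\{\ell_0(m) : m_0 \leq m \leq k\}$. For each fixed $m \geq m_0$, the desired inequality then holds along $\{\ell_k\}$ for all $k$ sufficiently large, which is the content of the lemma.

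The main (and essentially the only) technical input is the continuity of $\Vol_{X|Z}$ employed in the first step. A self-contained derivation, avoiding direct citation of \cite{ELMNP09}, would adapt Lazarsfeld's perturbation estimate (\cite{Laz04}, Lemma 2.2.35) to the restricted setting: writing $G$ as a difference of very ample divisors and combining the resulting short exact sequences with restriction to $Z$, one obtains the bound $|\dim H^0(X|Z, \mathcal{O}(kD + E)) - \dim H^0(X|Z, \mathcal{O}(kD))| = O(k^{p-1})$ for $D$ big with $Z \not\subset \mathbb{B}_+(D)$ and $E$ arbitrary, which yields the required continuity after dividing by $k^p/p!$. This is the ``simple algebraic computation'' alluded to in the statement.
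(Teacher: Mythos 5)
The central difficulty with your argument is that its two key inputs are not available at the point where Lemma \ref{lemma for Fujita approximation} is needed. You invoke the limit property (\ref{limsup is lim}) ``by the argument leading to'' that equation; but in the paper that equation is derived in the Remark \emph{after} Theorem \ref{Fujita approximation}, by taking $A$ ample in the Fujita-type approximation and reducing to the ample case via Serre vanishing — and Theorem \ref{Fujita approximation} is exactly what Lemma \ref{lemma for Fujita approximation} is used to prove, so this is circular. The alternative citation, Theorem A of \cite{ELMNP09} for continuity (and likewise the limit property for $mL-G$), is not formally circular, but it is a main theorem of that paper whose proof runs through their restricted Fujita-type approximation and the multiplier-ideal description of restricted volumes, i.e.\ precisely the machinery this paper announces it will not use in reproving Theorem \ref{Fujita approximation}; importing it to prove an auxiliary counting lemma defeats the purpose of the reproof. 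Note also that the limit property (not merely the $\limsup$ in the definition) is genuinely indispensable for your diagonal extraction: with only a $\limsup$ one knows, for each fixed $m$, that infinitely many $\ell$ are ``good'', but the good sets for different $m$ need not share a common tail, so no single subsequence $\{\ell_k\}$ working for all $m\geq m_0$ can be produced. Your argument therefore stands or falls with the limit property, which is exactly where the circularity enters.

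The intended proof (the ``simple algebraic computation'' of 2.2.C in \cite{Laz04}) is elementary and one-sided, and avoids all of this: since $L$ is big (and, in the situation where the lemma is applied, $\iota(Z)\nsubseteq\mathbb{B}_+(L)$), one can fix $a$ and a section $s\in H^0(X,\mathcal{O}(aL-G))$ with $s|_Z\not\equiv 0$; multiplication by $s^{\otimes\ell}$ then injects $H^0(X|Z,\mathcal{O}(\ell(m-a)L))$ into $H^0(X|Z,\mathcal{O}(\ell(mL-G)))$, so $\dim H^0(X|Z,\mathcal{O}(\ell(mL-G)))\geq\dim H^0(X|Z,\mathcal{O}(\ell(m-a)L))$, and one concludes with the homogeneity $\Vol_{X|Z}(kD)=k^p\Vol_{X|Z}(D)$ (\cite{ELMNP09}, Lemma 2.2 — an elementary statement the paper does allow itself, and uses again later in the proof of Theorem \ref{Fujita approximation}) together with $(m-a)^p\geq m^p(1-\varepsilon')$ for $m\geq m_0$. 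Finally, your proposed self-contained substitute for continuity is itself unproven as stated: the two-sided bound $\abs{\dim H^0(X|Z,\mathcal{O}(kD+E))-\dim H^0(X|Z,\mathcal{O}(kD))}=O(k^{p-1})$ does not follow by restricting Lazarsfeld's exact-sequence argument to $Z$, because an extendable section of $\mathcal{O}(kD)|_Z$ vanishing on $Z\cap A'$ need not be $s_{A'}|_Z$ times an \emph{extendable} section of $\mathcal{O}(kD-A)|_Z$; only the one-sided inequality above comes for free, and that is all the lemma actually requires.
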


\begin{proof}[Proof of Theorem \ref{Fujita approximation}.]

Throughout this proof, 
we fix some $G$ which appeared in the Theorem \ref{UGG}, 
and a smooth metric $h_G$ on $G$. 
For any fixed integer $m$, we define the weight of $h_L^mh_G^{-1}$ as follows:
\begin{equation*}
  u_m 
  := \varphi 
     + \frac{1}{m} \log (\abs{s_{m,1}}_{ m \varphi}^2+...+\abs{s_{m,N(m)}}_{m \varphi}^2), 
\end{equation*} 
where $ \{ s_{m,1},...,s_{m,N(m)} \} $ is a complete orthonormal system of
$H^0(X,\mathcal{O}(mL-G))$
with respect to the norm 

\begin{equation*}
\norm{s}_{m \varphi} ^2:= \int_X \abs{s}_{m \varphi}^2 d \mu
\end{equation*}
\begin{equation*}
\abs{s}_{m \varphi}^2 := (h_L^mh_G^{-1})(s,s)e^{-m \varphi}.
\end{equation*} 

This is essentially the same as $u_m$ in Theorem \ref{convergence of FS} 
($\iota=\mathrm{id}$ case).
In fact, as in subsection 3.2, we can get
\begin{equation}\label{T m convergence to T}
    \langle (\iota^*T_m)^p \rangle \to \langle (\iota^*T)^p \rangle, 
\end{equation}
where $T_m:=\theta+dd^cu_m$, $T:=\theta+dd^cP_X \varphi$  
assuming that $\iota$ is a closed embedding 
and that $Z$ is smooth, $\iota(Z) \nsubseteq \mathbb{B}_+(L)$. 
Here the difference caused by $G$ does not matter, 
for $G$ has no contribution to the asymptotic behavior of 
$H^0(X, \mathcal{O}(mL-G))$ thanks to the bigness of $L$. 

If we set $\mathcal{J} $ as the ideal sheaf 
generated locally by $s_{m,1},...,s_{m,N(m)}$, then 
$\mathcal{J} \subseteq \mathcal{I}(mu_m)$ holds. 
Therefore, by taking a log resolution we have the following commutative diagram, 
where $\pi_Z^*$ and $\pi_X^*$ are modifications from smooth projective varieties 
such that 
\begin{equation}\label{condition of modification 1}
  \pi_X^* \mathcal{I}(mu_m) 
  = \mathcal{O}_{\widetilde{X}}(-E'),\ \pi_X^* \mathcal{J} 
  = \mathcal{O}_{\widetilde{X}}(-F'), \ \text{ and } \  E' \leq F'.
\end{equation}
\[\xymatrix{
   {\widetilde{Z}}\ar[r]^{\widetilde{\iota}}\ar[d]_{\pi_Z} 
 & {\widetilde{X}}\ar[d]^{\pi_X}\\ 
   {Z}\ar[r]_{\iota} 
 & X
}
\]
Moreover, since $T_m$ has algebraic singularities, 
we may assume
\begin{equation}\label{condition of modification 2}
 \pi_X^*T_m = \gamma + [F], 
\end{equation}
where $\gamma $ is a smooth semipositive form and $F:=F'/m$. 
$[F]$ denotes the closed positive $(1,1)$-current defined by $F$. 
We claim that this diagram actually satisfies the condition 
in Theorem \ref{Fujita approximation} for a sufficiently large $m$. 

By Proposition \ref{UGG}, $\mathcal{O}(mL) \otimes \mathcal{I}(mu_m)$ is globally generated.
Therefore its pull-back $\mathcal{O}(m\pi_X^*L-E')$ is also globally generated. 
For this reason, we may have a semiample divisor $A'$ satisfying $ m \pi_X^* L = A' + E' $. 
Then the subadditivity property of multiplier ideal sheaves (see \cite{Laz04} 9.5.B) implies  
\begin{equation*}
\begin{split}
  &H^0(\widetilde{X}|\widetilde{Z},\mathcal{O}(\ell A')) 
   = H^0(\widetilde{X}|\widetilde{Z},\mathcal{O}(\ell(m\pi_X^*L-E'))) \\ 
 &= H^0(\widetilde{X}|\widetilde{Z},\pi_X^*(\mathcal{O}(\ell mL)\otimes\mathcal{I}(mu_m)^{\ell})) 
 \supseteq H^0(\widetilde{X}|\widetilde{Z},\pi_X^*(\mathcal{O}(\ell mL)\otimes\mathcal{I}(\ell mu_m)))
\end{split}
\end{equation*}
and we get 
\begin{equation*}
\begin{split}
 H^0(\widetilde{X}|\widetilde{Z},\pi_X^*(\mathcal{O}(\ell mL)\otimes\mathcal{I}(\ell mu_m)))  
 &\supseteq H^0(X|Z,\mathcal{O}(\ell mL)\otimes {\pi_X}_*\pi_X^* \mathcal{I}(\ell mu_m)) \\ 
 &=H^0(X|Z,\mathcal{O}(\ell mL) \otimes \mathcal{I}(\ell mu_m)) 
\end{split}
\end{equation*}
by the integral closedness of $\mathcal{I}(\ell mu_m)$. 
Further,  
\begin{equation*}
\begin{split} 
  H^0(X|Z,\mathcal{O}(\ell mL) \otimes \mathcal{I}(\ell mu_m)) 
 &\supseteq H^0(X|Z,\mathcal{O}(\ell(mL-G)) \otimes \mathcal{I}(\ell mu_m)) \\
 &=H^0(X|Z,\mathcal{O}(\ell (mL-G))) 
\end{split}
\end{equation*}
by the definition of $u_m$. 
Consequently, with Lemma \ref{lemma for Fujita approximation}, 
it can be seen that there exists a subsequence $\{ \ell_k \}$ and a sufficiently large $m$ 
such that
\begin{equation*}
 \frac{\dim H^0(\widetilde{X}|\widetilde{Z},\mathcal{O}(\ell_kA'))}{{{\ell}_k}^p/p!}
 \geq m^p(\Vol_{X|Z}(L)-\varepsilon). 
\end{equation*} 
Setting $A:=A'/m$, $E:=E'/m$, 
By homogeneity of restricted volume (\cite{ELMNP09} Lemma 2.2),  
\begin{equation*}
\Vol_{\widetilde{X}|\widetilde{Z}}(A) \geq \Vol_{X|Z}(L) - \varepsilon
\end{equation*}
and $\pi_X^*L = A + E$ hold. 
From this estimate 
it is also possible to deduce that $A$ is big for a sufficiently large $m$, 
because the above diagram for $\iota$ is also valid for the identity map. 
The proof of the reversed inequality is not hard. 
\end{proof}

With the proof of Theorem \ref{Fujita approximation}, 
we finally get to our goal of this subsection.
Observe that one can approximate $\MA(P_{X|Z}\varphi)$ and $\Vol_{X|Z}(L)$ at the same time 
taking takes suitable modifications. 

{\em Proof of Theorem \ref{Main Theorem}.}
Since $\iota$ is a closed embedding and $\iota(Z) \nsubseteq \mathbb{B}_+(L)$, 
we may assume $\widetilde{\iota}$ is also an embedding and 
$\widetilde{\iota}(\widetilde{Z}) \nsubseteq \mathbb{B}_+(A')$. 
By the semiampleness and the bigness of $A'$, 
there exists a smooth semipositive form $\theta_A$ in $c_1(A)$ such that     
\begin{equation*}
\begin{split}
    \Vol_{\widetilde{X}|\widetilde{Z}}(A) 
  &= \Vol_{\widetilde{Z}}({\widetilde{\iota}}^*A) = \int_{\widetilde{Z}}({\widetilde{\iota}}^* \theta_A)^p \\
  &= \int_{\widetilde{Z}} \big\langle ({\widetilde{\iota}}^*( \theta_A + [E]))^p\big\rangle.  
\end{split}
\end{equation*} 
The last equality is a consequence of the non-pluripolarity of the Monge-Amp\`{e}re product. 
${\widetilde{\iota}}^*(\theta_A + [E])$ and ${\widetilde{\iota}}^*(\gamma + [F])$ 
are in the same class so that 
one can apply Theorem \ref{comparison theorem} to (\ref{condition of modification 1}), 
to deduce the following: 
\begin{equation*}
\begin{split}
    \int_{\widetilde{Z}} \big\langle ({\widetilde{\iota}}^*( \theta_A + [E]))^p\big\rangle         
  & \geq \int_{\widetilde{Z}} \big\langle ({\widetilde{\iota}}^*( \gamma + [F]))^p \big\rangle \\
  & = \int_{\widetilde{Z}} \big\langle (\pi_Z^* \iota^* T_m)^p \big\rangle 
    = \int_Z \big\langle ( \iota^*T_m)^p \big\rangle.  
\end{split}
\end{equation*} 
For an arbitrary $ \varepsilon >0 $, 
the proof of Theorem \ref{convergence of FS} shows 
\begin{equation*}
  \int_Z \big\langle ( \iota^*T_m)^p \big\rangle
  \geq \int_Z \big\langle(\iota^*T)^p \big\rangle - \varepsilon 
\end{equation*}
if we take $m$ sufficiently large. 
This implies  
\begin{equation*}
  \Vol_{X|Z}(L) \geq \Vol_{\widetilde{X}|\widetilde{Z}}(A) \geq \int_Z \MA(\iota^* P_X \varphi)
\end{equation*}
so combining this inequality 
with Theorem \ref{comparison for equilibrium} and Corollary \ref{half of Fujita}, 
we finally get the identity 
\begin{equation}\label{restricted volume can be written by MA}
  \Vol_{X|Z}(L) = \int_Z \MA(P_{X|Z} \varphi). 
\end{equation}
With this identity and Proposition \ref{upper bound 4}, 
Theorem \ref{Main Theorem} is now concluded 
from Lemma 2.2 in \cite{Ber06} which is shown by basic measure theory.
$\hfill \Box$

\section{Integral representations for the restricted volume}

In this section, we discuss several integral representation of the restricted volume. 
\begin{thm}\label{integral representation for restricted volume}
   Let $Z \subseteq X$ be a (possibly singular) subvariety of $X$ and assume 
  $\iota(Z) \nsubseteq \mathbb{B}_+(L)$. Then the following holds. 
 \begin{align*}
  \Vol_{X|Z}(L) 
  &=\int_{Z_{\mathrm{reg}}} \MA(P_{X|Z_{\mathrm{reg}}} \varphi) 
  =\int_{Z_{\mathrm{reg}}} \MA((\iota|_{Z_{\mathrm{reg}}})^*P_X \varphi) \\
  & =\sup_T \int_{Z_{\mathrm{reg}}} \big\langle ((\iota|_{Z_{\mathrm{reg}}})^*T)^p \big\rangle 
  = \int_{Z_{\mathrm{reg}}} \big\langle ((\iota|_{Z_{\mathrm{reg}}})^*T_{\min})^p \big\rangle 
  = \int_{X \setminus \mathbb{B}_+(L)}(T_{\min})^p \wedge \left[Z\right], 
 \end{align*}
  where $T$ runs through all the closed positive currents in $c_1(L)$, 
  with small unbounded loci not contained in $\iota(Z)$. 
  We denote by $T_{\min}$ a minimum singular current in $c_1(L)$ and  
  denote by $Z_{\mathrm{reg}}$ the regular locus of $Z$. 
  The last integrand is defined as a closed positive current 
  on $X \setminus \mathbb{B}_+(L)$ 
  in the manner of Bedford-Taylor, 
  and $\left[Z\right]$ denotes the closed positive current defined by $Z$. 
\end{thm}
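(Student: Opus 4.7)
The plan is to reduce the singular case to the smooth case, for which the identity $\Vol_{X|Z}(L) = \int_Z \MA(P_{X|Z}\varphi)$ was already obtained as (\ref{restricted volume can be written by MA}) inside the proof of Theorem \ref{Main Theorem}. Pick a resolution of singularities $\pi : \widetilde{Z} \to Z$ with $\widetilde{Z}$ smooth projective, and set $\widetilde{\iota} := \iota\circ\pi$. Since $\pi$ is surjective, the pullback $\pi^{*} : H^{0}(Z,\iota^{*}\mathcal{O}(mL)) \to H^{0}(\widetilde{Z},\widetilde{\iota}^{*}\mathcal{O}(mL))$ is injective, which gives $\dim H^{0}(X|Z,\mathcal{O}(mL)) = \dim H^{0}(X|\widetilde{Z},\mathcal{O}(mL))$ for every $m$, and therefore $\Vol_{X|Z}(L) = \Vol_{X|\widetilde{Z}}(L)$. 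Because $\widetilde{\iota}(\widetilde{Z}) = \iota(Z) \nsubseteq \mathbb{B}_{+}(L)$ and $\widetilde{Z}$ is smooth projective, the whole machinery of Section~3 (the regularity Theorem \ref{regularity}, the two-sided bound Theorem \ref{fundamental inequality for equilibrium}, the convergence Theorem \ref{convergence of FS}, and finally (\ref{restricted volume can be written by MA})) applies to $\widetilde{\iota}$, yielding $\Vol_{X|\widetilde{Z}}(L) = \int_{\widetilde{Z}} \MA(P_{X|\widetilde{Z}}\varphi)$.

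Next I transfer the integrals from $\widetilde{Z}$ back to $Z_{\mathrm{reg}}$. Since $\pi$ restricts to a biholomorphism $\pi^{-1}(Z_{\mathrm{reg}}) \xrightarrow{\sim} Z_{\mathrm{reg}}$, an inspection of the definition of the equilibrium weight shows $P_{X|\widetilde{Z}}\varphi = \pi^{*}P_{X|Z_{\mathrm{reg}}}\varphi$ on $\pi^{-1}(Z_{\mathrm{reg}})$, while the exceptional set $\widetilde{Z}\setminus\pi^{-1}(Z_{\mathrm{reg}})$ and the singular set $Z_{\mathrm{sing}}$ are proper analytic subsets, hence pluripolar, so they carry no mass for the non-pluripolar Monge-Ampère product. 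Consequently
\begin{equation*}
\int_{\widetilde{Z}}\MA(P_{X|\widetilde{Z}}\varphi) = \int_{Z_{\mathrm{reg}}}\MA(P_{X|Z_{\mathrm{reg}}}\varphi),
\end{equation*}
and the same argument with $\widetilde{\iota}^{*}P_{X}\varphi = \pi^{*}((\iota|_{Z_{\mathrm{reg}}})^{*}P_{X}\varphi)$, combined with Theorem \ref{comparison for equilibrium} applied to the smooth variety $\widetilde{Z}$, yields the second equality of the theorem.

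For the third and fourth equalities, fix any closed positive $T = \theta + dd^{c}\psi \in c_{1}(L)$ with small unbounded locus not contained in $\iota(Z)$. Then $\widetilde{\iota}^{*}\psi$ is a well-defined $\widetilde{\iota}^{*}\theta$-psh function on $\widetilde{Z}$ with small unbounded locus, and since $P_{X}\varphi$ is a minimal singular $\theta$-psh function, the restriction $\widetilde{\iota}^{*}P_{X}\varphi$ is less singular than $\widetilde{\iota}^{*}\psi$. Applying Theorem \ref{comparison theorem} on $\widetilde{Z}$ and invoking the non-pluripolar transfer argument above gives
\begin{equation*}
\int_{Z_{\mathrm{reg}}}\big\langle((\iota|_{Z_{\mathrm{reg}}})^{*}T)^{p}\big\rangle \;\leq\; \int_{Z_{\mathrm{reg}}}\big\langle((\iota|_{Z_{\mathrm{reg}}})^{*}T_{\min})^{p}\big\rangle,
\end{equation*}
with equality for the distinguished choice $T_{\min} := \theta + dd^{c}P_{X}\varphi$, so both the supremum and the $T_{\min}$ expressions coincide with the second member of the chain.

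For the final equality, note that $T_{\min}$ is locally bounded on $X \setminus \mathbb{B}_{+}(L)$, so Bedford-Taylor theory defines $(T_{\min})^{p}$ as a closed positive $(p,p)$-current there, and the wedge product $(T_{\min})^{p} \wedge [Z]$ is a Borel measure concentrated on $Z \cap (X\setminus\mathbb{B}_{+}(L))$. In tubular coordinates around any point of $Z_{\mathrm{reg}}\setminus\mathbb{B}_{+}(L)$, the standard slicing formula for Bedford-Taylor products identifies this measure with $\iota_{*}\langle(\iota^{*}T_{\min})^{p}\rangle$, while the complementary piece $Z_{\mathrm{sing}}\setminus\mathbb{B}_{+}(L)$ is a proper analytic subset of $Z$ and carries no mass for either side. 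The main obstacle is a bookkeeping one: verifying that every ingredient of Section~3, in particular the construction of holomorphic vector fields in Lemma \ref{construction of vector fields} and the $C^{1,1}$-regularity of Theorem \ref{regularity}, goes through when the subvariety is replaced by the morphism $\widetilde{\iota}$, which is not an embedding; this is handled by passing to a further modification $\widetilde{X} \to X$ into which $\widetilde{Z}$ embeds as a smooth subvariety and repeating the arguments there.
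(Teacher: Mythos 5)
Your proposal is correct and follows essentially the same route as the paper's own proof: reduce to the smooth case by resolution of singularities (the paper performs the embedded resolution $\widetilde{Z}\subseteq\widetilde{X}$ from the outset and identifies $P_{\widetilde{X}|\widetilde{Z}}\pi_Z^*\varphi$ with $\pi_Z^*P_{X|Z_{\mathrm{reg}}}\varphi$ via the Riemann-type extension theorem, rather than first working with the non-embedded morphism $\widetilde{\iota}$ and patching at the end), then invoke (\ref{restricted volume can be written by MA}) together with Theorems \ref{comparison for equilibrium} and \ref{comparison theorem}, using that non-pluripolar Monge--Amp\`ere products put no mass on the exceptional and singular loci. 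The one substantive computation in the paper's proof --- the identity between $(T_{\min})^p\wedge\left[Z\right]$ on $X\setminus\mathbb{B}_+(L)$ and the pushforward of $\big\langle(\iota^*T_{\min})^p\big\rangle$ --- is established there by induction on $p$ with integration by parts against cutoff functions (formula (\ref{restriction})); this is precisely the ``standard slicing formula'' you cite without proof, so your argument is complete up to supplying that induction.
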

\begin{proof} 
  First assume $Z$ is smooth. 
  The first two identities are nothing but (\ref{restricted volume can be written by MA}) and Theorem \ref{comparison for equilibrium}. 
  The second two are consequences of Theorem \ref{comparison theorem}. 
  Let us prove the last identity. 
  Note that the trivial extension of the current 
  $(T_{\min})^p \wedge \left[Z\right]$ to $X$ 
  is a closed positive current and has finite mass by Skoda's extension theorem. 
  Fix a Borel function $\psi$ such that $T_{\min}=\theta + dd^c\psi $. By induction on $p$, we are going to prove that  
  \begin{equation}\label{restriction}
   \int_{X \setminus \mathbb{B}_+(L)} 
    \rho (\theta+dd^c \psi)^p \wedge \left[Z\right] 
   = \int_{Z \setminus \iota^{-1}(\mathbb{B}_+(L))} \iota^*\rho (\iota^*\theta+dd^c \psi)^p  
  \end{equation}
  for any Borel function $\rho$ on $X$. 
 The case $p=0$ is trivial. Assume this is true for $p-1$. 
 First fix a {\em smooth } function $\rho$ on $Z$. 
 Take some $\chi_k \in C^{\infty}_0 (X \setminus \mathbb{B}_+(L))$ $(k=1,2,3, \dots )$
 such that 
 $\chi_k \equiv 1$ outside of the $1/k$-neighborhood of $\mathbb{B}_+(L)$.  
 Then 
 \begin{equation*}
 \begin{split}
 & \int_{X \setminus \mathbb{B}_+(L)} 
    \chi_k \rho (\theta + dd^c\psi) 
     \wedge (\theta + dd^c\psi)^{p-1} \wedge \left[Z\right] \\
 &=\int_{X \setminus \mathbb{B}_+(L)} 
    \chi_k \rho \theta \wedge (\theta + dd^c\psi)^{p-1} \wedge \left[Z\right] 
     + \psi dd^c(\chi_k \rho) 
        \wedge (\theta + dd^c\psi)^{p-1} \wedge \left[Z\right] \\
 &=\int_{Z \setminus \iota^{-1}(\mathbb{B}_+(L))} 
    \iota^* (\chi_k \rho \theta) \wedge (\iota^*\theta + dd^c \iota^* \psi)^{p-1} 
     + \iota^*\psi dd^c \iota^*(\chi_k \rho) 
     \wedge (\iota^*\theta + dd^c \iota^*\psi)^{p-1} \\ 
 \end{split}
 \end{equation*}
 by the induction hypothesis. This equals to 
 \begin{equation*}
 \begin{split}
 &\int_{Z \setminus \iota^{-1}(\mathbb{B}_+(L))} 
    \iota^* (\chi_k \rho \theta) \wedge (\iota^*\theta + dd^c \iota^* \psi)^{p-1} 
     + \iota^*(\chi_k \rho) dd^c\iota^*\psi 
      \wedge (\iota^*\theta + dd^c \iota^*\psi)^{p-1}. 
 \end{split}
 \end{equation*}
 Letting $k \to \infty$, we get (\ref{restriction})
 by the Lebesgue convergence theorem. 
 The general case follows from the density. 
  
  The assumption $Z$ is smooth can be dropped if we consider a resolution of singularities,  
  because the Monge-Amp\`{e}re measure has no mass on any closed proper algebraic subset.  
  For instance, let us prove the first identity. 
  Definition of $P_{X|Z_{\mathrm{reg}}} \varphi $ is the same as (\ref{definition of FS}). 
  If we take a resolution of singularities, 
  $\Vol_{X|Z}(L)=\Vol_{\widetilde{X}|\widetilde{Z}}(\pi_Z^*L)$ holds. 
  It is enough to show 
  $P_{\widetilde{X}|\widetilde{Z}} \pi_Z^*\varphi = \pi_Z^*P_{X|Z_{\mathrm{reg}}} \varphi$ 
  in the regular locus of $\pi_Z$.  
  $P_{\widetilde{X}|\widetilde{Z}} \pi_Z^*\varphi \leq \pi_Z^*P_{X|Z_{\mathrm{reg}}} \varphi$ is trivial 
  and the converse inequality follows 
  by the Riemann-type extension theorem for psh functions. 
  Other identities above are shown in the same manner. 
\end{proof}
\begin{rem}
  The assumption that $T$ has a small unbounded locus can be dropped 
  since we may define the {\em non-pluripolar} Monge-Amp\`{e}re product for any $\theta$-psh function  
  and approximate it by the sequence of minimal singular $\theta$-psh 
  (As in the proof of Proposition 1.20 in \cite{BEGZ08}). 
  $\hfill \Box$
\end{rem}     
 The last representation shows that 
 $\Vol_{X|Z}(L)$ is independent of $L$ in the same first Chern class. 
 This result was already proved in \cite{ELMNP09} algebraically.  
\begin{cor}\label{restricted volume is numerical}
   $\Vol_{X|Z}(L)$ is determined only by $Z \subseteq X$ and $c_1(L)$.
\end{cor}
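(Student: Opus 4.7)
The plan is to read the corollary off directly from the integral representations in Theorem \ref{integral representation for restricted volume}. The key point is that each of those formulas expresses $\Vol_{X|Z}(L)$ in terms of data that only sees the cohomology class $c_1(L)$ and the subvariety $Z$, never the line bundle $L$ itself.

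Concretely, assume first $\iota(Z) \nsubseteq \mathbb{B}_+(L)$. I would apply the supremum formula
\begin{equation*}
 \Vol_{X|Z}(L) = \sup_T \int_{Z_{\mathrm{reg}}} \big\langle ((\iota|_{Z_{\mathrm{reg}}})^*T)^p \big\rangle,
\end{equation*}
where $T$ ranges over closed positive currents in $c_1(L)$ with small unbounded loci not contained in $\iota(Z)$. The set over which the supremum is taken and the integrand both depend only on the class $c_1(L)$ and on $Z$. Hence if $L, L'$ satisfy $c_1(L) = c_1(L')$ one immediately obtains $\Vol_{X|Z}(L) = \Vol_{X|Z}(L')$. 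Alternatively, the $T_{\min}$-formula gives the same conclusion: any two minimum singular closed positive currents in a fixed class are equivalent with respect to singularities, so Theorem \ref{comparison theorem} equates their Monge-Amp\`{e}re masses against $[Z]$, and the resulting number is intrinsic to $c_1(L)$ and $Z$.

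In the complementary case $\iota(Z) \subseteq \mathbb{B}_+(L)$ I would invoke two standard facts, both available in \cite{ELMNP09}: that the augmented base locus $\mathbb{B}_+(L)$ is itself a numerical invariant of $L$, and that $\Vol_{X|Z}(L)=0$ whenever $\iota(Z) \subseteq \mathbb{B}_+(L)$. Together these imply that under $c_1(L)=c_1(L')$ one has simultaneously $\iota(Z)\subseteq \mathbb{B}_+(L')$ and $\Vol_{X|Z}(L)=\Vol_{X|Z}(L')=0$, which closes this case.

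The main (rather mild) obstacle, to the extent there is one, is the degenerate case $\iota(Z) \subseteq \mathbb{B}_+(L)$: the formulas of Theorem \ref{integral representation for restricted volume} explicitly assume $\iota(Z) \nsubseteq \mathbb{B}_+(L)$, so they cannot be applied verbatim and one must appeal to the numerical invariance of $\mathbb{B}_+$ and to the vanishing of the restricted volume along its subvarieties. Neither of these is proved in the present paper, but both are classical in the algebraic theory and suffice to complete the argument.
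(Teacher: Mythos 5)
Your proposal is correct and takes essentially the same route as the paper: the paper likewise reads the numerical invariance directly off the current-theoretic representations of Theorem \ref{integral representation for restricted volume} (it points to the last one, $\int_{X\setminus\mathbb{B}_+(L)}(T_{\min})^p\wedge\left[Z\right]$, whose ingredients depend only on $c_1(L)$ and $Z$). Your extra treatment of the degenerate case $\iota(Z)\subseteq\mathbb{B}_+(L)$ via the numerical invariance of $\mathbb{B}_+$ and the vanishing of the restricted volume there is a sensible supplement to a point the paper leaves implicit.
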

Further, these representations of the restricted volume do not need sections of $L$ hence 
we can extend the definition of restricted volumes to any class. 
\begin{dfn}\label{restricted volume for class}
  For any big class $ \alpha \in  H^{1,1}(X;\mathbb{R})$ and subvariety $Z \subseteq X$, 
  we define the restricted volume as follows. 
  \begin{align*}
    \Vol_{X|Z}(\alpha) 
  &:=\int_{Z_{\mathrm{reg}}} \MA(P_{X|Z_{{\mathrm{reg}}}} \varphi) 
   =\int_{Z_{\mathrm{reg}}} \MA((\iota|_{Z_{\mathrm{reg}}})^*P_X \varphi) \\
  &=\int_{Z_{\mathrm{reg}}} \big\langle ((\iota|_{Z_{\mathrm{reg}}})^*T_{\min})^p \big\rangle 
   =\sup_T \int_{Z_{\mathrm{reg}}} \big\langle ((\iota|_{Z_{\mathrm{reg}}})^*T)^p \big\rangle 
   =\int_{X \setminus \mathbb{B}_+(\alpha)}(T_{\min})^p \wedge \left[Z\right], 
 \end{align*}
 where T runs through all the closed positive currents in $\alpha$, 
 with small unbounded loci not contained in $\iota(Z)$. 
 $\hfill \Box$
\end{dfn}
For the definitions of the bigness and the augmented base locus for an arbitrary class, see \cite{BEGZ08}. 
Note that the regularity of $P_X \varphi$ for a general class $\alpha$ is already shown in \cite{BD09}. 
We will prove the regularity of $P_{X|Z} \varphi$ for the class $c_1(L)$ 
in section 5. 
But for a general $\alpha$, the corresponding regularity result seems unknown. 
The second identity in the above definition is true   
since it is easily seen that $P_{X|Z} \varphi$ has a small unbounded locus even in this case 
and the proof of Theorem \ref{comparison for equilibrium} is still valid.  
The another identities can be proved totally the same as in the case $\alpha = c_1(L)$.   

In the end of this subsection, 
we give the representation of restricted volumes 
via so-called moving intersection number. 
By definition, the moving intersection number counts 
the number of points where $Z$ and a general divisor $D \in \abs{mL}$ intersects 
outside of the base locus. 
We denote it by $\langle (mL)^p , Z \rangle $. 
It is already known that 
$\Vol_{X|Z}(L) = \lim_{m \to \infty} m^{-p} \big\langle (mL)^p, Z \big\rangle$ 
(see \cite{ELMNP09}, Theorem 2.13).  
The refinement of this result is now obtained.                        
\begin{thm}\label{moving intersection number description} 
In the situation of Theorem \ref{integral representation for restricted volume}, 
 \begin{align*}
   \Vol_{X|Z}(L)&= \lim_{m \to \infty} \int_Z \MA(u_m) \\
            &= \lim_{m \to \infty} \frac{\big\langle (mL)^p, Z \big\rangle}{m^p} =: \norm{L^p. Z}. 
 \end{align*}
\end{thm}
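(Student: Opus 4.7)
The plan is to prove the two asserted equalities separately, leveraging the machinery already developed in the paper.

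For the first equality $\Vol_{X|Z}(L) = \lim_{m \to \infty}\int_Z \MA(u_m)$, the main point is that the proof of Theorem \ref{convergence of FS} actually establishes convergence of integrals (not merely of currents): combining the uniform convergence $u_m \to P_{X|Z}\varphi$ on compact subsets of $Z \setminus \iota^{-1}(\mathbb{B}_+(L))$ with the $\limsup$ bound from Theorems \ref{comparison theorem} and \ref{comparison for equilibrium} yields
\begin{equation*}
\int_{Z \setminus \iota^{-1}(\mathbb{B}_+(L))}(\iota^*\theta + dd^c u_m)^p \longrightarrow \int_Z \MA(P_{X|Z}\varphi).
\end{equation*}
I would then identify this with $\int_Z \MA(u_m)$ for $m$ large. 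Since $u_m$ has algebraic singularities precisely along the base locus $B_m$ of $H^0(X|Z, \mathcal{O}(mL))$ on $Z$, its non-pluripolar Monge--Amp\`ere mass is the integral of $(\iota^*\theta + dd^c u_m)^p$ over $Z \setminus B_m$. By Theorem \ref{fundamental inequality for equilibrium}, the Bergman kernel is bounded below away from zero on any compact subset of $Z \setminus \iota^{-1}(\mathbb{B}_+(L))$ for $m$ large, which forces $B_m \subseteq \iota^{-1}(\mathbb{B}_+(L))$ for such $m$. The complement $\iota^{-1}(\mathbb{B}_+(L)) \setminus B_m$ is a proper algebraic subset and is negligible for the smooth measure $(\iota^*\theta + dd^c u_m)^p$ on $Z \setminus B_m$, so the two integrals agree. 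Combined with (\ref{restricted volume can be written by MA}), this gives the first equality.

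For the second equality, I would identify $\int_Z \MA(u_m)$ with $m^{-p}\langle (mL)^p, Z\rangle$ directly. Let $\Phi_m \colon X \dashrightarrow \mathbb{P}(H^0(X|Z, \mathcal{O}(mL))^*)$ be the rational map defined by the orthonormal basis $\{s_{m,i}\}$, well defined off the base locus. A local computation in a trivialization $h_L = e^{-\varphi_L}$ gives $mu_m = \log \sum |s_{m,i}|^2 - m\varphi_L$, hence
\begin{equation*}
\iota^*\theta + dd^c u_m = \frac{1}{m}(\Phi_m \circ \iota)^*\omega_{FS}
\end{equation*}
on $Z \setminus B_m$. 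Taking the $p$-th power and integrating,
\begin{equation*}
\int_Z \MA(u_m) = \frac{1}{m^p}\int_{Z \setminus B_m}(\Phi_m \circ \iota)^*\omega_{FS}^p,
\end{equation*}
and the right-hand side equals $m^{-p}\langle (mL)^p, Z\rangle$ by definition: $p$ generic hyperplanes in $\mathbb{P}(V_m^*)$ pull back via $\Phi_m$ to $p$ generic divisors in $|mL|_{X|Z}$, which by Bertini meet $Z$ transversally in exactly $\langle (mL)^p, Z\rangle$ points outside the base locus. Alternatively, since the result $\lim_m m^{-p}\langle (mL)^p, Z\rangle = \Vol_{X|Z}(L)$ is already known (\cite{ELMNP09}, Theorem 2.13), it suffices to combine that with the first equality.

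The main technical obstacle is the careful bookkeeping of the various base loci ($B_m$ versus $\iota^{-1}(\mathbb{B}_+(L))$) to justify that the non-pluripolar mass $\int_Z \MA(u_m)$ may be computed over $Z \setminus \iota^{-1}(\mathbb{B}_+(L))$ for the purposes of passing to the limit. Once this is settled, the Fubini--Study identification with moving intersection numbers is essentially standard, so the proof is a synthesis of already established results rather than requiring new ideas.
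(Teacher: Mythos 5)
Your proposal is correct and takes essentially the same route as the paper: the first equality is precisely the mass convergence contained in the proof of Theorem \ref{convergence of FS} together with (\ref{restricted volume can be written by MA}), and the per-$m$ identity $\int_Z \MA(u_m)=m^{-p}\big\langle (mL)^p, Z\big\rangle$ that you obtain via the Fubini--Study pullback and Bertini is the same computation the paper performs by passing to a log resolution of $\abs{mL}$. One small remark: Theorem \ref{fundamental inequality for equilibrium} only gives $B_m\cap K=\emptyset$ for $m\geq m_0(K)$ with $m_0$ depending on the compact set $K$, so the inclusion $B_m\subseteq\iota^{-1}(\mathbb{B}_+(L))$ for a fixed large $m$ is not literally justified; this is harmless, however, since, as you yourself note, $(\iota^*\theta+dd^c u_m)^p$ is a smooth measure on $Z\setminus B_m$ and charges no proper algebraic subset, which already yields the mass identification you need.
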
 
\begin{proof}
  The first identity is a direct consequence of Theorem \ref{convergence of FS}. 
  The second is easily seen by taking a log resolution of $\abs{mL}$. 
  In fact the second identity holds before taking limit. 
  Notation in the third identity follows \cite{ELMNP09}. 
\end{proof}

\section{$L^2$-extension theorem from a subvariety}
 
 In this section, we state the desired $L^2$-extension theorem for our purpose 
 and give a proof. 
 
 Let us first fix notations. Given a holomorphic Hermitian vector bundle $E$ 
 with a metric $h_E$ on a K\"{a}hler manifold $X$, 
 we denote its Chern curvature tensor by $c(E)$. 
 That is, $c(E) := \sqrt{-1}D^2$ where $D$ denotes 
 the exterior covariant derivative associated to the Chern connection of $(E,h_E)$. 
 $c(E)$ is an $E^* \otimes E$-valued real $(1,1)$-form  
 and defines a Hermitian form on $T_{X,x} \otimes E_x$ \ ($x \in X$) as follows: 
 \begin{equation*} 
   H( t_1\otimes e_1, t_2 \otimes e_2) 
   := \big(c(E)(t_1,\sqrt{-1}t_2)e_1|e_2\big) \ \ \ \ \ 
   \text{ for } \ t_1,t_2 \in T_{X,x}, \ e_1,e_2 \in E_x.   
 \end{equation*}
 Here $( \ \ | \ \ )$ is defined by $h_E$. 
 Recall that $c(E)$ is said to be semipositive in the sense of Nakano 
 if $H$ is semipositive everywhere in $X$. 
 And we denote it by $c(E) \geq_{\mathrm{Nak}} 0$. 
 If a K\"{a}hler metric $\omega$ is fixed, 
 $c(E)$ also defines a Hermitian form on $(\pigwedge^{p.q} T_{X,x}^*) \otimes E_x$ 
 as follows: 
 \begin{equation*}
  \theta ( \alpha , \beta ):= 
    \big( [c(E),\Lambda]\alpha|\beta \big) 
    \ \text{ for } \ \alpha , \beta \in (\pigwedge^{p,q}T_{X,x}^*) \otimes E_x 
    \ \ \ \ \ ( x \in X ) ,
 \end{equation*} 
 where $\Lambda$ denotes the formal adjoint operator of the multiplication by $\omega$. 
 It is known that if $p=n$ and $c(E)$ is semipositive in the sense of Nakano, 
 $\theta$ defines a semipositive Hermitian form. 
 We will use the following norm: 
 \begin{equation*}
   \abs{\alpha}^2_{\theta}
   = \inf 
       \Bigg\{ M \geq 0 \ \Bigg| \begin{matrix} 
                  \ \abs{(\alpha | \beta)}^2 
                  \leq M \cdot \theta(\beta , \beta ) \\ 
                  \ \text{ for any } \ \beta \in (\pigwedge^{n,q} T_{X,x}^*) \otimes E_x\\ 
                                 \end{matrix} \Bigg\}
  \ \in [0,+\infty] 
 \end{equation*}
 for $\alpha \in (\pigwedge^{n,q} T_{X,x}^* ) \otimes E_x$. 
\begin{thm}\label{L2 extension 1}
 Let $Z$ be a $p$-dimensional submanifold of a $n$-dimensional K\"{a}hler manifold $X$ 
 with its  K\"{a}hler form $\omega$, $K$ a compact subset of $X$. 
 Then there exist constants $ N=N(Z, K)>0 $ and $ C=C(Z, K)>0 $ such that
 the following holds. 

 Fix any complete K\"{a}hler open set $\Omega \subseteq X$ contained in $K$, 
 a holomorphic vector bundle $E \to X$ with a smooth Hermitian metric $h_E$
 whose Chern curvature satisfying 
  \begin{equation*}
    c(E) \geq_{\mathrm{Nak}} N \cdot \mathrm{id}_E \ \ \ \ \ \text{ on $\Omega$, }
  \end{equation*}
 and $f \in H^0(Z \cap \Omega , \mathcal{O}(K_X \otimes E))$. 
 Then we have a section $F \in H^0(\Omega , \mathcal{O}(K_X \otimes E))$ which satisfies 
 $F|_{Z \cap \Omega }=f$ and
  \begin{equation*}
         \int_{\Omega} \abs{F}_{h_E}^2dV_{\omega, X} 
  \leq C \int_{Z \cap \Omega} \abs{f}_{h_E}^2dV_{\omega, Z}. 
  \end{equation*}
\end{thm}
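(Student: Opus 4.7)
The plan is to follow the standard Ohsawa--Takegoshi strategy. Cover the relatively compact set $Z \cap K$ by finitely many holomorphic coordinate charts $\{U_\alpha\}$ on which $Z$ is cut out by $n-p$ holomorphic functions $g_\alpha = (g_{\alpha,1},\ldots, g_{\alpha,n-p})$; in each chart choose coordinates with $Z \cap U_\alpha = \{z_{p+1}=\cdots=z_n=0\}$ and extend $f$ to a smooth $K_X \otimes E$-valued $(n,0)$-form on $U_\alpha$ by taking the coefficient (a function along $Z \cap U_\alpha$) to be constant in the transverse variables $z_{p+1},\dots,z_n$. Glue the local extensions by a partition of unity and multiply by a cutoff $\chi$ supported in a small tubular neighborhood $V$ of $Z \cap K$ and equal to $1$ on a smaller one. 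This produces a smooth compactly supported extension $\widetilde f$ of $f$ whose $\bar\partial$ vanishes to first order along $Z$. The cover, cutoff, and partition depend only on $Z$ and $K$, so the constants that appear below will as well.

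I would then look for the holomorphic extension in the form $F = \widetilde f - v$, with $v$ solving $\bar\partial v = \bar\partial \widetilde f$ on $\Omega$ and $v|_Z = 0$. The vanishing is enforced by measuring $v$ against a weight singular along $Z$: set
\begin{equation*}
   \psi_\varepsilon := (n-p) \log (\abs{g_\alpha}^2 + \varepsilon^2),
\end{equation*}
patched across charts up to a bounded error. Two estimates then drive the proof. First, a local residue-type calculation that uses the first-order vanishing of $\bar\partial \widetilde f$ along $Z$ gives
\begin{equation*}
   \int_\Omega \abs{\bar\partial \widetilde f}_{h_E}^2\, e^{-\psi_\varepsilon}\, dV_{\omega, X} \leq C_1(Z,K) \int_{Z \cap \Omega} \abs{f}_{h_E}^2\, dV_{\omega, Z}
\end{equation*}
uniformly in $\varepsilon$. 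Second, the twisted Bochner--Kodaira--Nakano inequality on the complete K\"ahler open set $\Omega$, with the (smooth, for $\varepsilon>0$) metric $h_E\, e^{-\psi_\varepsilon}$, yields a solution $v_\varepsilon$ of $\bar\partial v_\varepsilon = \bar\partial \widetilde f$ with
\begin{equation*}
   \int_\Omega \abs{v_\varepsilon}_{h_E}^2\, e^{-\psi_\varepsilon}\, dV_{\omega, X} \leq C_2 \int_\Omega \abs{\bar\partial \widetilde f}_{h_E}^2\, e^{-\psi_\varepsilon}\, dV_{\omega, X}
\end{equation*}
as soon as the Nakano-positivity of $c(E) + \sqrt{-1}\,\partial\bar\partial \psi_\varepsilon$ acting on $(n,1)$-forms is secured. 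Since $\psi_\varepsilon$ is plurisubharmonic, its contribution is non-negative, and a single choice $N = N(Z,K)$ large enough to absorb the bounded negative contributions coming from the gluing partition-of-unity and the cutoff suffices.

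Extracting a weak limit $v_\varepsilon \to v$ as $\varepsilon \to 0$ yields a solution with $\int_\Omega \abs{v}_{h_E}^2\, \abs{g}^{-2(n-p)}\, dV_{\omega, X} < \infty$, which forces $v|_Z = 0$ by a standard Lelong-type vanishing argument. Setting $F := \widetilde f - v$ gives the desired holomorphic extension of $f$, and combining the two displayed inequalities produces the bound of the theorem with $C = C(Z,K)$. The principal obstacle will be the twisted Bochner--Kodaira step with the singular weight: one must apply Demailly's refined inequality (with an auxiliary convex function of $\log(\abs{g}^2 + \varepsilon^2)$) in order to gain a uniform positive curvature term, and then carry out the local residue computation sharply enough to express the bound purely in terms of $\int_Z \abs{f}_{h_E}^2\, dV_{\omega,Z}$. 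Ensuring that the resulting $N$ and $C$ depend only on $Z$ and $K$, and not on $\varepsilon$, $E$, or $h_E$ beyond the Nakano-positivity hypothesis, is the delicate part.
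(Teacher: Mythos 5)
Your overall strategy is the standard Ohsawa--Takegoshi scheme and matches the paper's proof in most respects: a regularized log-distance weight singular along $Z$, a twisted (Ohsawa/Demailly) $L^2$-estimate in which the hypothesis $c(E)\geq_{\mathrm{Nak}}N\cdot\mathrm{id}_E$ with $N=N(Z,K)$ large absorbs the curvature defects of the glued weight, a weak limit as $\varepsilon\to 0$, and the non-integrability of $e^{-(n-p)\psi}$ along $Z$ to force the correction term to vanish on $Z$. The one place where you genuinely deviate is also where the gap lies: you fix the smooth extension $\widetilde f$ and its cutoff once and for all and let only the weight depend on $\varepsilon$, whereas the paper multiplies the extension by the $\varepsilon$-dependent cutoff $\rho(e^{\psi}/\varepsilon)$, so that the $\bar\partial$-datum splits as $g_\varepsilon=g^{(1)}_\varepsilon+g^{(2)}_\varepsilon$ with $g^{(1)}_\varepsilon$ supported in the shrinking tube $\{e^{\psi}<\varepsilon\}$ and $g^{(2)}_\varepsilon$ acquiring an $O(\varepsilon)$ factor after weighting.

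The gap is your first displayed inequality, $\int_\Omega\abs{\bar\partial\widetilde f}^2e^{-\psi_\varepsilon}dV_{\omega,X}\leq C_1(Z,K)\int_{Z\cap\Omega}\abs{f}^2dV_{\omega,Z}$, which is not available for a fixed extension with a constant depending only on $(Z,K)$. The partition-of-unity part of $\bar\partial\widetilde f$ has the form $\sum_\alpha\bar\partial\eta_\alpha\wedge(f_\alpha-f_\beta)$, where $f_\alpha-f_\beta$ vanishes on $Z$ only to first order; integrating this against $(\abs{g}^2+\varepsilon^2)^{-(n-p)}$ is indeed finite, but the bound one gets involves the transverse derivatives of the local extensions, i.e.\ derivatives of $f$ along $Z\cap\Omega$. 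These cannot be controlled by $\int_{Z\cap\Omega}\abs{f}^2dV_{\omega,Z}$ near $\partial\Omega$, since interior Cauchy estimates degenerate there and $\Omega$ is an arbitrary open subset of $K$. (There is also the preliminary problem that the constant-in-transverse-variables extension need not even be defined at points of $\Omega$ whose chart projection lands in $Z\setminus\Omega$.) The paper's proof never meets this difficulty: the dangerous term $g^{(1)}_\varepsilon$ lives in $\{e^\psi<\varepsilon\}$ and its weighted norm concentrates, as $\varepsilon\to 0$, to a residue integral involving only the trace of $\abs{f}^2$ on $Z$, while the term $g^{(2)}_\varepsilon$, which does involve $\bar\partial G$ and hence derivatives of the extension, carries a factor making it $O(\varepsilon)$ and disappears in the limit. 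To close your argument you must reintroduce the $\varepsilon$-dependent cutoff (equivalently, prove your residue estimate only in the limit $\varepsilon\to 0$ for the cut-off data), exactly as in the paper.
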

Although the following proof of this theorem is almost the same as the proof of 
{\em ``the Ohsawa-Takegoshi-Manivel $L^2$-extension theorem''} in \cite{Dem00}, 
we describe it for account of the proof of Theorem \ref{L2 extension 2}. 
The difference from \cite{Dem00} is that 
we deal with arbitrary submanifolds and general vector bundles 
while we give up sharp estimates. 

\begin{proof}
 There exists some $G \in C^{\infty}(\Omega,K_X \otimes E)$ such that 
 \begin{equation*}
   G|_{Z \cap \Omega} = f, \ \ \ \ \ (\bar{\partial}G)|_{Z \cap \Omega} = 0. 
 \end{equation*}
 Fix a smooth cut-off function $\rho: \mathbb{R} \to [0,1]$ satisfying 
 \begin{align*}
   \rho (t) :=  \Bigg\{ \begin{matrix} \ 1 \ \ ( t \leq  \frac{1}{2}) \\
                                       \ 0 \ \ ( t \geq  1 ) \\ 
                    \end{matrix}
        & \ \ \ \ \ \abs{\rho'} \leq 3. 
 \end{align*} 
 Then we set as follows: 
  \begin{align*}
   & G_{\varepsilon} := \rho \bigg(\frac{e^{\psi}}{\varepsilon}\bigg)\cdot G \\
   & g_{\varepsilon} := \bar{\partial}G_{\varepsilon} 
       = \underbrace{\bigg(1 + \frac{e^{\psi}}{\varepsilon}\bigg)
                        \rho'\bigg(\frac{e^{\psi}}{\varepsilon}\bigg)
                        \bar{\partial}\psi_{\varepsilon}\wedge G
                        }_{g^{(1)}_{\varepsilon}}
         + \underbrace{\rho \bigg(\frac{e^{\psi}}{\varepsilon}\bigg)\bar{\partial}G
                          }_{g^{(2)}_{\varepsilon}}, 
  \end{align*}
 where 
  \begin{align*}
   & \psi_{\varepsilon} := \log (\varepsilon + e^{\psi})  \ \ \ \ \ 
     \Bigg( \Leftrightarrow  
            1+\frac{e^{\psi}}{\varepsilon} 
           = \frac{e^{\psi_{\varepsilon}}}{\varepsilon}
     \Bigg) \\ 
   & \psi := \log \sum_{\alpha} \chi^2_{\alpha} \sum_{i=p+1}^{n} \abs{z_{\alpha,i}}^2, 
       \ \ \ \ \ \varepsilon >0. 
  \end{align*}
 Here we choose a locally finite system of local coordinates 
 $\{z_{\alpha,1},...,z_{\alpha,n}\}_{\alpha}$ so that 
  \begin{equation*}
    Z \cap U_{\alpha} = \{ z_{\alpha,p+1}= \cdots =z_{\alpha, n} = 0 \} 
  \end{equation*}
 hold and choose a smooth function $\chi_{\alpha}$ so that the following hold.  
  \begin{equation*}
    \Supp \chi_{\alpha} \subseteq U_{\alpha}, 
    \ \ \sum_{\alpha} \chi_{\alpha}^2 > 0, 
    \ \ \text{ and } 
    \ \ \sum_{\alpha} \chi_{\alpha}^2 \sum_{i=p+1}^{n} \abs{z_{\alpha,i}}^2 < e^{-1}  
    \ \ \ \text{ in $X$. }
 \end{equation*} 
 This $\psi$ satisfies the following condition (see \cite{Dem82}, Proposition 1.4). 
 \begin{itemize}
    \setlength{\itemsep}{0pt}
   \item[$(1)$]
     $\psi \in C^{\infty}(X \setminus Z)\cap L^1_{\mathrm{loc}}(X)$ \\
     $\psi < -1$ in $X$, $\psi \to -\infty$ around $Z$. 
   \item[$(2)$]
     $e^{-(n-p)\psi}$ is {\em not} integrable around any point of $Z$
   \item[$(3)$]
     There exists a smooth real $(1,1)$-form $\gamma$ in $X$ such that \\
     $\sqrt{-1}\partial\bar{\partial}\psi \geq \gamma$ holds in $X \setminus Z$. 
 \end{itemize}
 
 If the equation
  \begin{equation*}
   \begin{cases}
      \bar{\partial}u_{\varepsilon} = \bar{\partial}G_{\varepsilon} 
      \ \text{ in } \ \Omega \\
      \abs{u_{\varepsilon}}^2 e^{-(n-p)\psi} 
      \ \text{ is locally integrable around } \ Z \\ 
   \end{cases}
  \end{equation*}
 has been solved, $u_{\varepsilon} = 0 $ on $Z$ holds by the above condition
 hence the sequence $ \{G_{\varepsilon} - u_{\varepsilon} \}_{\varepsilon}$ 
 is expected to converge to what we want. 
 This is our strategy.   

 To solve $\bar{\partial}$-equations, we quote the following from \cite{Dem00}. 
 \begin{thm}[Ohsawa's modified $L^2$-estimate. \cite{Dem00}, Proposition 3.1]\label{Ohsawa L2 estimate}
  Let $X$ be a complete K\"{a}hler manifold 
  with a K\"{a}hler metric $\omega$ ($\omega$ may not be necessarily complete), 
  $E \to X$ a holomorphic Hermitian vector bundle. 
  Assume that there exist some smooth functions $a,b>0$ and if we set 
   \begin{align*}
    & c'(E) := 
           a \cdot c(E) 
             - \sqrt{-1}\partial\bar{\partial}a 
             - \sqrt{-1}b^{-1}\partial a \wedge \bar{\partial}a  \\
    & \theta' ( \alpha , \beta ) := 
               \big( [c'(E),\Lambda]\alpha|\beta \big) 
                 \ \text{ for } 
               \ \alpha, \beta \in (\pigwedge^{n,q}T_{X,x}^*) \otimes E_x 
               \ \ \ \ \ ( x \in X), 
   \end{align*}
  it holds that 
   \begin{equation*}
    \theta' \geq 0 \ \text{ on } \ (\pigwedge^{n,q}T_{X,x}) \otimes E_x 
    \ \ \ \ \ \text{ for any $ x \in X$}. 
   \end{equation*}
Then we have the following. 

 For any $g \in L^2(X, (\pigwedge^{n,q}T_X^*) \otimes E )$ with $\bar{\partial}g=0$ and 
  \begin{equation*}
    \int_X \abs{g}^2_{{\theta}'}dV_{\omega, X} < +\infty, 
  \end{equation*}
 there exists a section $u \in L^2(X, (\pigwedge^{n,q-1}T_X^*) \otimes E)$ 
 with $\bar{\partial}u = g$ such that 
  \begin{equation*}
    \int_X (a+b)^{-1}\abs{u}^2dV_{\omega,X} \leq 2 \int_X \abs{g}^2_{{\theta}'}dV_{\omega,X}.  
  \end{equation*}

\end{thm}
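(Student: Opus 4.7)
I would follow the twisted Bochner-Kodaira-Nakano technique of Ohsawa (as presented in \cite{Dem00}), establishing the estimate first as an \emph{a priori} inequality on smooth compactly supported forms and then solving the $\bar\partial$-equation via Hahn-Banach duality. The completeness of the K\"ahler metric $\omega$ is used to reduce all estimates to $C^\infty_c$: smooth compactly supported $E$-valued $(n,q)$-forms are dense in $\mathrm{Dom}(\bar\partial) \cap \mathrm{Dom}(\bar\partial^*)$ in the graph norm, obtained via a sequence of cut-off functions whose gradients go to zero uniformly.

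\textbf{A priori estimate.} The central claim is that for $u \in C^\infty_c(X, \pigwedge^{n,q}T_X^* \otimes E)$ one has
\begin{equation*}
  \norm{\sqrt{a}\,\bar\partial u}^2 + \norm{\sqrt{a+b}\,\bar\partial^* u}^2 \geq \int_X \big\langle [c'(E),\Lambda]u,u\big\rangle \, dV_{\omega, X}.
\end{equation*}
To derive this, apply the classical Bochner-Kodaira-Nakano identity to the form $\sqrt{a}\,u$, use $\bar\partial(\sqrt{a}\,u) = \bar\partial\sqrt{a}\wedge u + \sqrt{a}\,\bar\partial u$ (and the formally adjoint expansion for $\bar\partial^*$), and integrate by parts. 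The purely second-order piece produces the term $-\sqrt{-1}\partial\bar\partial a$ with the correct sign, while indefinite cross terms of the form $\operatorname{Re}\langle(\bar\partial a)^{\sharp} u, \bar\partial^* u\rangle$ are controlled by Cauchy-Schwarz with weight $b^{1/2}$:
\begin{equation*}
  2\,\big|\operatorname{Re}\,\langle (\bar\partial a)^{\sharp} u,\bar\partial^* u\rangle\big| \leq \norm{\sqrt{b}\,\bar\partial^* u}^2 + \int_X b^{-1}\,\sqrt{-1}\,\partial a \wedge \bar\partial a \wedge (\cdots) u \cdot \bar u ,
\end{equation*}
the second term being absorbed into the curvature operator and producing precisely the coefficient $\sqrt{-1}b^{-1}\partial a\wedge\bar\partial a$ in $c'(E)$.

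\textbf{Duality step.} Let $g$ satisfy the hypotheses. For any $\alpha \in \mathrm{Dom}(\bar\partial^*)$, orthogonally decompose $\alpha = \alpha_1 + \alpha_2$ with $\alpha_1 \in \ker\bar\partial$ and $\alpha_2 \in (\ker\bar\partial)^\perp$; since $g \in \ker\bar\partial$, we have $\langle g,\alpha\rangle = \langle g,\alpha_1\rangle$, and $\bar\partial^*\alpha_1 = \bar\partial^*\alpha$. By the very definition of $\abs{g}^2_{\theta'}$ and pointwise Cauchy-Schwarz,
\begin{equation*}
  \abs{\langle g,\alpha_1\rangle}^2 \leq \Big(\int_X \abs{g}^2_{\theta'} dV_{\omega, X}\Big)\cdot \int_X \big\langle [c'(E),\Lambda]\alpha_1,\alpha_1\big\rangle dV_{\omega, X}.
\end{equation*}
Applied to $\alpha_1$ (which lies in $\ker\bar\partial$, so its $\bar\partial$ vanishes), the a priori estimate bounds the right-hand integral by $\norm{\sqrt{a+b}\,\bar\partial^*\alpha}^2$. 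Thus the functional $\alpha \mapsto \langle g,\alpha\rangle$ is continuous on $\operatorname{Im}(\bar\partial^*)$ with respect to the weighted norm $\alpha\mapsto\norm{\sqrt{a+b}\,\bar\partial^*\alpha}$. Hahn-Banach extension and the Riesz representation theorem produce $u \in L^2$ with $\bar\partial u = g$ and $\int_X (a+b)^{-1}\abs{u}^2 dV_{\omega,X} \leq 2\int_X \abs{g}^2_{\theta'}dV_{\omega, X}$, the factor $2$ absorbing the loss from the decomposition and the Cauchy-Schwarz constant.

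\textbf{Main obstacle.} The hard part is the commutator bookkeeping in the a priori estimate: one must track how $\sqrt{a}$ interacts with $\bar\partial$ and its formal adjoint, identify exactly which cross terms are genuinely indefinite, and choose Cauchy-Schwarz with the weight $b$ so that the leftover positive contribution assembles into the $(1,1)$-form $\sqrt{-1}b^{-1}\partial a\wedge\bar\partial a$ with the correct sign. Verifying that the algebraic result is precisely the operator $[c'(E),\Lambda]$ rather than a more complicated expression requires care with the Nakano-type convention used to define $\theta'$, but this is exactly the content of Ohsawa's original calculation reproduced in \cite{Dem00}.
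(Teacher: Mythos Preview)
The paper does not give its own proof of this theorem: it is simply quoted from \cite{Dem00} as a black box (``To solve $\bar\partial$-equations, we quote the following from \cite{Dem00}'') and then used in the proof of Theorem~\ref{L2 extension 1}. So there is no argument in the paper to compare against. Your sketch is essentially the standard Ohsawa--Demailly proof of this result, and it is the right outline.

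One point deserves correction. You write that ``the completeness of the K\"ahler metric $\omega$ is used to reduce all estimates to $C^\infty_c$,'' but the hypothesis explicitly allows $\omega$ to be \emph{incomplete}; what is assumed is only that $X$ carries \emph{some} complete K\"ahler metric. The density argument therefore does not apply directly to $\omega$. The usual remedy (and the one in \cite{Dem00}) is to run the a~priori estimate for the complete metrics $\omega_\varepsilon = \omega + \varepsilon\,\omega'$, where $\omega'$ is a fixed complete K\"ahler metric, obtain a solution $u_\varepsilon$ for each $\varepsilon$, and then extract a weak limit as $\varepsilon \to 0$ using the monotonicity of the relevant norms in $\varepsilon$ on $(n,q)$-forms. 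This limiting procedure is where some of the care (and the eventual constant) enters; your explanation ``the factor $2$ absorbing the loss from the decomposition and the Cauchy--Schwarz constant'' is too vague on this point. Otherwise the twisted Bochner--Kodaira--Nakano computation and the Hahn--Banach/Riesz duality step are exactly as they should be.
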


 Let us go back to the proof of Theorem \ref{L2 extension 1}. 
 First, we are going to compute 
 \begin{equation}\label{modified Chern curvature}
  \theta_{\varepsilon}' := 
           \big[ a_{\varepsilon}(c(E) + (n-p) \sqrt{-1} \partial \bar{\partial} \psi )
                        - \sqrt{-1} \partial\bar{\partial} a_{\varepsilon} 
                        - b_{\varepsilon}^{-1} \sqrt{-1}\partial a_{\varepsilon} \wedge \bar{\partial} a_{\varepsilon}, \Lambda 
           \big]. 
 \end{equation} 
 ($a_{\varepsilon}, b_{\varepsilon}$ will be defined in the following. )
 If we set 
 \begin{equation*}
  a_{\varepsilon} := \chi_{\varepsilon}(\psi_{\varepsilon}) > 0
 \end{equation*}
 for some smooth function $\chi_{\varepsilon}$, it can be computed as: 
 \begin{align*}
  \partial a_{\varepsilon}                        
   & = \chi_{\varepsilon}'(\psi_{\varepsilon}) \partial \psi_{\varepsilon}, \\
  \sqrt{-1} \partial \bar{\partial} a_{\varepsilon} 
   & = \chi_{\varepsilon}'(\psi_{\varepsilon}) \sqrt{-1} \partial \bar{\partial} \psi_{\varepsilon}
      + \chi_{\varepsilon}''(\psi_{\varepsilon}) \sqrt{-1} \partial \psi_{\varepsilon} \wedge \bar{\partial} \psi_{\varepsilon} \\ 
   & = \chi_{\varepsilon}'(\psi_{\varepsilon} ) \sqrt{-1} \partial \bar{\partial} \psi_{\varepsilon}
      + \frac{\chi_{\varepsilon}''(\psi_{\varepsilon})}{\chi_{\varepsilon}'(\psi_{\varepsilon})^2} \sqrt{-1} \partial a_{\varepsilon} \wedge \bar{\partial} a_{\varepsilon} 
 \end{align*} 
 so comparing with (\ref{modified Chern curvature}), it is natural to set 
 \begin{equation*}
  b_{\varepsilon} := 
          - \frac{\chi_{\varepsilon}'(\psi_{\varepsilon})^2}{\chi_{\varepsilon}''(\psi_{\varepsilon})} \ \ (>0). 
 \end{equation*} 
 And we finally define 
 \begin{equation*}
  \chi_{\varepsilon}(t) := \varepsilon -t + \log(1-t). 
 \end{equation*}
 Then for sufficiently small $\varepsilon>0$, we have 
 \begin{align*}
  & a_{\varepsilon} 
    \ \geq \ \varepsilon - \log (\varepsilon + e^{-1}) \ \geq \ 1 \\
  & \sqrt{-1} \partial \bar{\partial} a_{\varepsilon} + b_{\varepsilon}^{-1} \sqrt{-1} \partial a_{\varepsilon} \wedge \bar{\partial} a_{\varepsilon} 
   =\chi_{\varepsilon}'(\psi_{\varepsilon}) \sqrt{-1} \partial \bar{\partial} \psi_{\varepsilon}
    \ \leq \ - \sqrt{-1} \partial \bar{\partial} \psi_{\varepsilon} 
 \end{align*}
 hence 
 \begin{equation*}
  \theta_{\varepsilon}' \ \geq \ \big[ c(E) + (n-p)\sqrt{-1} \partial \bar{\partial} \psi + \sqrt{-1} \partial \bar{\partial} \psi_{\varepsilon}, \Lambda \big]. 
 \end{equation*}
 On the other hand, simple computations show:  
 \begin{align*} 
  \partial \psi_{\varepsilon} 
  &= \frac{e^{\psi}}{\varepsilon + e^{\psi}} \partial \psi, \\
  \sqrt{-1}\partial \bar{\partial} \psi_{\varepsilon} 
  &= \frac{e^{\psi}}{\varepsilon + e^{\psi}} \sqrt{-1} \partial \bar{\partial} \psi
   + \frac{e^{\psi}}{\varepsilon + e^{\psi}} \sqrt{-1} \partial \psi \wedge \bar{\partial} \psi 
   - \frac{e^{2\psi}}{(\varepsilon + e^{\psi})^2} \sqrt{-1} \partial \psi \wedge \bar{\partial} \psi \\ 
  &= \frac{e^{\psi}}{\varepsilon + e^{\psi}} \sqrt{-1} \partial \bar{\partial} \psi 
    + \frac{\varepsilon}{e^{\psi}} \sqrt{-1} \partial \psi_{\varepsilon} \wedge \bar{\partial} \psi_{\varepsilon}. 
 \end{align*}
 Therefore, by the compactness of $K$, there exists a constant $N(Z,K)>0$ such that 
 \begin{equation}\label{Nakano positivity}
  c(E) \geq_{\mathrm{Nak}} N \cdot \mathrm{id_E} \ \ \ \ \ \text{ on } \Omega 
 \end{equation}
 implies 
 \begin{equation}\label{theta is positive}
  \theta_{\varepsilon}' \geq 0 \ \text{ on } \  
    (\pigwedge^{n,1} T_{X,x}^*) \otimes E_x \ \ \ \ \ \text{ for all } x \in \Omega 
 \end{equation}
 and eigenvalues of ${\theta}_{\varepsilon}'$ 
 are bounded from below by a positive constant 
 (uniformly with respect to $\varepsilon$) near $Z \cup \Omega$.

 Next we will estimate $\bar{\partial}\psi_{\varepsilon}$ 
 by $\abs{\cdot}_{{\theta}_{\varepsilon}'}$. 
 Fix arbitrary $\alpha,\beta \in (\pigwedge^{n,1} T_{X,x}^*) \otimes E_x$. 
 By definition, 
 \begin{equation*} 
  \abs{\bar{\partial}\psi_{\varepsilon} \wedge \alpha}^2_{{\theta}_{\varepsilon}'}
  = \inf 
    \Bigg\{ M \geq 0 \ \Bigg| \begin{matrix} 
                               \ \abs{(\bar{\partial}\psi_{\varepsilon}\wedge \alpha | \beta)}^2 
                                 \leq M \cdot \big( [c_{\varepsilon}'(E)\Lambda]\beta|\beta \big) \\ 
                               \ \text{ for any } \ \beta \in (\pigwedge^{n,1} T_{X,x}^*) \otimes E_x\\ 
                                       \end{matrix} \Bigg\} 
 \end{equation*} 
 so it is enough to estimate 
 $\abs{(\bar{\partial}\psi_{\varepsilon}\wedge \alpha | \beta)}^2$. 
 This can be done as follows: 
 \begin{equation*} 
 \begin{split}
  &\abs{(\bar{\partial}\psi_{\varepsilon}\wedge \alpha | \beta)}^2
   = \abs{(\alpha | (\bar{\partial}\psi_{\epsilon})^{\sharp}\beta)}^2 \\
  & \leq \abs{\alpha}^2 \cdot \abs{(\bar{\partial}\psi_{\epsilon})^{\sharp}\beta}^2 
   = \abs{\alpha}^2 \big( ( \bar{\partial}\psi_{\varepsilon})(\bar{\partial}\psi_{\varepsilon})^{\sharp}\beta|\beta \big) 
   = \abs{\alpha}^2 \big( [\sqrt{-1} \partial \psi_{\epsilon} \wedge \bar{\partial} \psi_{\epsilon} , \Lambda ] \beta | \beta \big) 
 \end{split}
 \end{equation*}
 by Shwartz' inequality ($\sharp$ denotes taking the formal adjoint of the multiplication operator), 
 and the last term is bounded by 
 \begin{equation*}
 \begin{split} 
  & \frac{e^{\psi}}{\varepsilon} \abs{\alpha}^2
                    \big( [\sqrt{-1}\partial \bar{\partial} \psi_{\varepsilon}
                       - \frac{e^{\psi}}{\varepsilon+e^{\psi}}\sqrt{-1}\partial \bar{\partial} \psi
                      , \Lambda ] \beta| \beta \big) \\
  & \leq  \frac{e^{\psi}}{\varepsilon} \abs{\alpha}^2
                    \big( [c(E) + (n-p)\sqrt{-1} \partial \bar{\partial} \psi 
                           + \sqrt{-1} \partial \bar{\partial} \psi_{\varepsilon}
                           , \Lambda] \beta| \beta \big) \\
  & \leq  \frac{e^{\psi}}{\varepsilon} \abs{\alpha}^2
                    \big( [c'_{\varepsilon}(E), \Lambda] \beta| \beta \big). 
 \end{split}
 \end{equation*} 
 The last inequality is a consequence of (\ref{Nakano positivity}). 
 Thus we may get a desired estimate  
 \begin{equation}\label{key estimate}
  \abs{\bar{\partial} \psi_{\varepsilon} \wedge \alpha}^2_{{\theta}_{\varepsilon}'}
  \leq \frac{e^{\psi}}{\varepsilon} \abs{\alpha}^2. 
 \end{equation}
 
 This time we estimate 
 $g_{\varepsilon} = g_{\varepsilon}^{(1)} + g_{\varepsilon}^{(2)}$ . \\ 
 By (\ref{key estimate}) and 
 $\Supp g_{\varepsilon}^{(1)} \subseteq \{ e^{\psi} < \varepsilon \}$,  
 $g_{\varepsilon}^{(1)}$ can be estimated. Namely, 
 \begin{equation*}
   \int_{\Omega \setminus Z} \abs{g_{\varepsilon}^{(1)}}^2_{{\theta}_{\varepsilon}'}e^{-(n-p)\psi} dV_{\omega, X} 
    \leq 4 \int_{\Omega \setminus Z} \abs{G}^2 \rho'\bigg( \frac{e^{\psi}}{\varepsilon}\bigg)^2e^{-(n-p)\psi} dV_{\omega, X} 
 \end{equation*} 
 holds. Since $e^{\psi} \sim \sum_{i=p+1}^{n} \abs{z_{\alpha, i}}^2 $ 
 on $U_{\alpha}$, thanks to the compactness of $K$ we get: 
 \begin{equation*}
      \limsup_{\varepsilon \to 0} \int_{\Omega \setminus Z} \abs{g_{\varepsilon}^{(1)}}^2_{{\theta}_{\varepsilon}'}e^{-(n-p)\psi} dV_{\omega, X} 
      \leq C \int_{Z \cap \Omega} \abs{f}^2 dV_{\omega, Z} < +\infty. 
 \end{equation*}
 We can also estimate $g_{\varepsilon}^{(2)}$.   
 Note that eigenvalues of ${\theta}_{\varepsilon}'$ are bounded below. 
 Then we get  
 \begin{equation*}
   \int_{\Omega \setminus Z} \abs{g_{\varepsilon}^{(2)}}^2_{{\theta}_{\varepsilon}'}e^{-(n-p)\psi}dV_{\omega, X} \leq O(\varepsilon) < +\infty
 \end{equation*} 
 because we can see that 
 $\displaystyle \abs{g_{\varepsilon}^{(2)}}^2_{{\theta}_{\varepsilon}'} = O(e^{\psi})$ 
 holds in $\Supp g_{\varepsilon}^{(2)} \subseteq \{ e^{\psi} < \varepsilon \} $, 
 by $\bar{\partial} G |_{Z \cap \Omega} = 0$ (using the Taylor expansion).  

 Now we can apply the modified $L^2$-estimate 
 for each $\varepsilon$ in $\Omega \setminus Z$. 
 Note that $\Omega \setminus Z$ is a complete K\"{a}hler manifold (see \cite{Dem82}, 
 Theorem 1.5). 
 There exists a sequence $ \{ u_{\varepsilon} \} \subseteq L^2(\Omega, K_X \otimes E )$ such that 
 \begin{equation*}
  \int_{\Omega \setminus Z} (a_{\varepsilon} + b_{\varepsilon} )^{-1} \abs{u_{\varepsilon}}^2 e^{-(n-p)\psi} dV_{\omega, X} 
  \leq 2 \int_{\Omega \setminus Z} \abs{g_{\varepsilon}}^2_{{\theta}_{\varepsilon}'}e^{-(n-p)\psi} dV_{\omega, X} \ < +\infty
 \end{equation*} 
 holds.  

 Let us estimate the left hand side of the inequality. It can be easily seen that 
 \begin{align*}
   & \psi_{\varepsilon} 
     \ \leq \ \log (\varepsilon + e^{-1}) \ \leq -1 \ + O(\varepsilon) \\
   & a_{\varepsilon} 
     \ \leq \ (1+ O(\varepsilon)) \psi^2_{\varepsilon} \\ 
   & b_{\varepsilon} 
     \ = \ (2-\psi_{\varepsilon})^2  
     \ \leq \ (9+ O(\varepsilon)) \psi^2_{\varepsilon} \\
   & a_{\varepsilon} + b_{\varepsilon} 
     \ \leq \ (10 + O(\varepsilon))\psi_{\varepsilon}^2
     \ \leq \ (10 + O(\varepsilon))(-\log(\varepsilon + e^{\psi}))^2 
 \end{align*}
 and 
 \begin{equation*}
  \int_{\Omega} \frac{\abs{G_{\varepsilon}}^2}{(\varepsilon+e^{\psi})^{(n-p)}(-\log(\varepsilon+e^{\psi}))^2}dV_{\omega, X}
  \leq \frac{M}{(\log \varepsilon)^2} 
 \end{equation*} 
 hold.  
 Therefore, if we set $F_{\varepsilon} := G_{\varepsilon} - u_{\varepsilon}$, it follows: 
 \begin{equation*}
 \begin{split}
  & \limsup_{\varepsilon \to 0} \int_{\Omega \setminus Z} \frac{\abs{F_{\varepsilon}}^2}{(\varepsilon + e^{\psi})^{(n-p)}(-\log(\varepsilon + e^{\psi}))^2}dV_{\omega, X} \\ 
  & \leq \limsup_{\varepsilon \to 0} 
     \Bigg(
     \ 22 \int_{\Omega \setminus Z} \abs{g_{\varepsilon}}^2_{{\theta}_{\varepsilon}'}e^{-(n-p)\psi}dV_{\omega, X} \ + \ \frac{2M}{(\log \varepsilon)^2} 
     \Bigg) \leq C \int_{Z \cap \Omega} \abs{f}^2 dV_{\omega, Z} \ < + \infty. 
 \end{split}
 \end{equation*}
 By construction, $\bar{\partial}F_{\varepsilon} = 0 $ holds on $\Omega \setminus Z$ 
 and in fact also in $\Omega$, thanks to the Riemann extension theorem. 
 
 Finally, Let $\varepsilon \searrow 0$. 
 Then after taking a weakly convergent subsequence, 
 we get a $F \in L^2(\Omega, K_X \otimes E )$ 
 such that $\bar{\partial}F = 0 $ in $\Omega$ and 
 \begin{equation*}
  \int_\Omega \frac{\abs{F}^2}{e^{(n-p)\psi}(-\psi)^2}dV_{\omega, X} \leq C \int_{Z \cap \Omega} \abs{f}^2 dV_{\omega, Z}. 
 \end{equation*} 
 By the compactness of $K$, we get the conclusion. 
\end{proof}

{\em Proof of Theorem \ref{L2 extension 2}.} Since $X$ is projective, 
 we may take a global meromorphic section $\sigma $ of $L$ 
 and may assume   
 $ \Supp ( \div( \sigma )  ) \cap Z \subsetneq Z$.  
 Fix a hypersurface $H \subseteq X$ such that $X \setminus H$ is Stein, 
 $\Supp(\div (\sigma)) \subseteq H$, and 
 $H \cap Z \subsetneq Z$ hold. 
 Then $L|_{X \setminus H} $ is trivial so that 
 we may identify $\varphi$ as a psh function on $X \setminus H$. 

 Let $\psi$ be a smooth exhaustive strictly-psh function in $X \setminus H$ 
 and set $\Omega_k := \{ \psi < k \}$. 
 Since $X \setminus H$ is Stein, 
 there exists a sequence $\varphi_k \in \PSH(\Omega_k)$ 
 satisfying $\varphi_k \searrow \varphi$ 
 (pointwise convergence in $\Omega_k$). 
 Note that $\varphi_k$ does not loss positivity.  

 We apply Theorem \ref{L2 extension 1} to $\Omega' := \Omega_k$ 
 and $E':= K_X^{-1} \otimes E \otimes L$ for each $k$.  
 Then by assumption there are sections 
 $\widetilde{s}_k \in H^0(\Omega_k, \mathcal{O}(K_X \otimes E'))$ 
 such that $\widetilde{s}_k|_{Z \cap \Omega_k} = s $ 
 and 
 \begin{equation}\label{L2 estimate}
 \begin{split}
   \int_{\Omega_k} \abs{\widetilde{s}_k}^2 e^{-\varphi_k} dV_{\omega, X} 
    & \leq C \int_{Z \cap \Omega_k} \abs{s}^2 e^{-\varphi_k} dV_{\omega, Z} \\
    & \leq C \int_Z \abs{s}^2 e^{- \varphi} dV_{\omega, Z}
 \end{split}
 \end{equation} 
 for a constant $C$. If we fix $l \in \mathbb{N}$, 
 there exists some constant $c(l) \leq e^{-\varphi_l}$ in $\Omega_l$ 
 hence we have 
 \begin{equation*}
   c(l) \cdot \int_{\Omega_l} \abs{\widetilde{s}_k}^2 dV_{\omega, X} 
   \leq C \int_Z \abs{s}^2 e^{-\varphi} dV_{\omega, Z}.    
 \end{equation*} 
 Using the diagonal process, we may find a subsequence:
 $\widetilde{s}_{k(i)} \to \widetilde{s}$ 
 (weakly $L^2$-convergent on $X$). 
 By Lemma \ref{weak L2 convergence implies pointwise convergence}, 
 $\bar{\partial} \widetilde{s}_{k(i)} = 0$ 
 implies that this is actually the pointwise convergence so that 
 $\widetilde{s}|_{Z \cap (X \setminus H)} = s $ holds. 
 We can deduce  
 \begin{equation*}
   \int_{X \setminus H} \abs{\widetilde{s}}^2 e^{-\varphi} dV_{\omega, X} 
    \leq C \int_Z \abs{s}^2 e^{-\varphi} dV_{\omega, Z} 
 \end{equation*}
 by (\ref{L2 estimate}) and by the lower-semicontinuity of $L^2$-norm.  
 $\widetilde{s} $ can be extended to $X$ by the Riemann extension theorem 
 and thus we conclude the theorem.  
$\hfill \Box$
\begin{lem}\label{weak L2 convergence implies pointwise convergence}
  Let $f_k$, $f$ be holomorphic functions defined in a domain 
 $\Omega \subseteq \mathbb{C}^n$. 
  Assume that the sequence $\{f_k\}$  weakly $L^2$-converges to $f$. 
  Then $\{f_k\}$ converges to $f$ pointwise in $\Omega$. 
\end{lem}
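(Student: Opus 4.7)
The plan is to use the mean value property for holomorphic functions, which expresses pointwise evaluation as a bounded $L^2$-pairing. Since weak $L^2$-convergence is precisely convergence of all such pairings, pointwise convergence will follow immediately.

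Concretely, I would fix an arbitrary point $z_0 \in \Omega$ and choose a radius $r>0$ small enough that the closed ball $\overline{B(z_0,r)} \subseteq \Omega$. Writing $B := B(z_0,r)$, the mean value equality for holomorphic functions gives, for every holomorphic $h$ defined on a neighborhood of $\overline{B}$,
\begin{equation*}
 h(z_0) = \frac{1}{\mathrm{vol}(B)}\int_{B} h\,dV
        = \int_{\Omega} h \cdot g \, dV,
\end{equation*}
where $g := \mathrm{vol}(B)^{-1}\mathbf{1}_{B}$ is a bounded, compactly supported function on $\Omega$, hence an element of $L^2(\Omega)$. Applying the weak convergence hypothesis to this particular test function $g$ then yields $f_k(z_0) \to f(z_0)$. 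Since $z_0 \in \Omega$ was arbitrary (and $r$ is chosen depending on $z_0$), pointwise convergence on all of $\Omega$ follows.

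There is essentially no obstacle here; the argument is a one-line reproducing-kernel observation wearing the disguise of the mean value property. The only point requiring minor care is the precise meaning of weak $L^2$-convergence: if global convergence in $L^2(\Omega)$ is intended, then $g$ is admissible since it is bounded with compact support; if instead one is given weak $L^2_{\mathrm{loc}}$ convergence (which is what the proof of Theorem \ref{L2 extension 2} actually supplies, via the uniform bound $c(l)\int_{\Omega_l}|\widetilde{s}_k|^2 dV \leq C\int_Z|s|^2 e^{-\varphi}dV$), the same pairing against $g|_B \in L^2(B)$ on the relatively compact ball $B$ does the job.
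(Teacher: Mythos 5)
Your argument is correct, and it reaches the conclusion by a different (and more elementary) device than the paper. You realize point evaluation at $z_0$ as the $L^2$-pairing with the fixed test function $g=\mathrm{vol}(B)^{-1}\mathbf{1}_{B}$, using the solid mean value property of holomorphic functions over a ball $B=B(z_0,r)\Subset\Omega$ (valid because holomorphic functions are harmonic in the underlying $2n$ real variables); weak convergence applied to this single $g$ then gives $f_k(z_0)\to f(z_0)$, and since $g$ is real-valued there is no issue with the conjugation in the Hermitian pairing. The paper instead invokes the Koppelman formula: with a cutoff $\chi\in C^\infty_0(\Omega)$, $\chi\equiv 1$ near $x$, it writes $f_k(x)=\int K^{n,0}_{\mathrm{BM}}(x,\zeta)\wedge\bar\partial\chi(\zeta)\wedge f_k(\zeta)$, and since $\bar\partial\chi$ vanishes near the singularity of the Bochner--Martinelli kernel, the kernel being integrated against is again a fixed bounded, compactly supported (hence $L^2$) object, so weak convergence applies. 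Both proofs rest on the same principle --- evaluation of holomorphic functions at a point is a bounded functional represented by integration against a fixed compactly supported $L^2$ kernel --- but your mean-value version avoids the Bochner--Martinelli machinery entirely, while the paper's representation-formula route is the one that generalizes to $\bar\partial$-closed forms and related settings. Your closing remark on local versus global weak $L^2$-convergence is also apt: since your test function is supported in a relatively compact ball, the argument works verbatim under the weak $L^2_{\mathrm{loc}}$ convergence that the extension argument actually produces.
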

\begin{proof}
  Fix any point $x \in \Omega$. 
  Taking $\chi \in C^{\infty}_0(\Omega)$ with $\chi \equiv 1$ near $x$, 
  we have: 
  \begin{equation*} 
    f_k(x) = \int_{\zeta \in \Omega} K^{n,0}_{\mathrm{BM}} (x, \zeta) \wedge \bar{\partial} \chi (\zeta) \wedge f_k(\zeta)
           \to \int_{\zeta \in \Omega} K^{n,0}_{\mathrm{BM}} (x, \zeta) \wedge \bar{\partial} \chi (\zeta) \wedge f(\zeta) = f(x) 
  \end{equation*} 
  by the Koppelman formula. 
  Here $K^{p,q}_{\mathrm{BM}}$ denotes the $(p,q)$-part of the Bochner-Martinelli kernel.    
\end{proof}
 
 {\bf Acknowledgments.}
The author would like to express his gratitude 
to his advisor Professor Shigeharu Takayama for his warm encouragements, 
suggestions and reading the drafts. 
The author also would like to thank Professor Takeo Ohsawa 
for several helpful comments concerning 
the $L^2$-extension theorem (Theorem \ref{L2 extension 2}). 
He is indebted to Doctor Shin-ichi Matsumura for valuable discussions in the seminars. 
This research is supported by JSPS Research Fellowships for Young Scientists (22-6742). 
 

\end{document}